\documentclass[11pt,a4paper,reqno]{amsart}
\usepackage{amsmath}
\date{}
 \usepackage{amssymb,latexsym, esint}
 \usepackage{amsthm}
 \usepackage[latin1]{inputenc}
 
 \usepackage{ulem}

\usepackage[dvips]{graphicx}
\usepackage{color}

\oddsidemargin 0cm
\evensidemargin 0cm
\textwidth 16cm

\newtheorem{defi}{Definition}[section]
\newtheorem{thm}{Theorem}[section]
\newtheorem{ex}{Example}[section]
\newtheorem{rem}{Remark}[section]
\newtheorem{rems}{Remarks}[section]

\newtheorem{cor}{Corollary}[section]

\newcommand{\R}{\mathbb R}
\newcommand{\N}{\mathbb N}
\newcommand{\Z}{\mathbb Z}

\begin{document}

\title[Sums of Eigenvalues]
{On sums of eigenvalues of elliptic operators on manifolds}

\author{Ahmad El Soufi}
\address{Universit\'e de Tours, Laboratoire de Math\'ematiques
et Physique Th\'eorique, UMR-CNRS 6083, Parc de Grandmont, 37200
Tours, France.} \email{elsoufi@univ-tours.fr}

\author{Evans M. Harrell II}
\address{School of Mathematics,
Georgia Institute of Technology,\
Atlanta GA 30332-0160, USA.} \email{ harrell@math.gatech.edu}

\author{Sa\"{\i}d Ilias}
\address{Universit\'e de Tours, Laboratoire de Math\'ematiques
et Physique Th\'eorique, UMR-CNRS 6083, Parc de Grandmont, 37200
Tours, France.} \email{ilias@univ-tours.fr}

\author{Joachim Stubbe}
\address{
EPFL, MATH-GEOM,
Station 8,
CH-1015 Lausanne, Switzerland}  \email{Joachim.Stubbe@epfl.ch}

\thanks{}

\begin{abstract}

We use the averaged variational principle introduced in a
recent article on graph spectra \cite{HaSt13}
to obtain upper bounds 
{for sums}
of eigenvalues of several partial differential operators of interest in geometric
analysis, which are analogues of 
{Kr\"oger's bound for Neumann spectra of Laplacians
on Euclidean domains \cite{Kro}.}
Among the operators
we consider are the Laplace-Beltrami operator on compact subdomains of manifolds.
{These estimates become more explicit and asymptotically sharp 
when the manifold is}
conformal to homogeneous spaces
(here extending a result
of Strichartz \cite{Str} with a simplified proof).
{In addition we obtain results for
the Witten Laplacian on the same sorts of domains and for}
Schr\"odinger operators with confining potentials on infinite Euclidean domains.
Our bounds have the sharp asymptotic form expected from the Weyl law or classical phase-space analysis.  
{Similarly} sharp bounds for the trace of the heat kernel follow as corollaries.
\end{abstract}

\subjclass[2010]{58J50, 35P15, 47A75}
\keywords{Manifold with density; Weighted Laplacian; Schr\"odinger operator, Witten Laplacian; Eigenvalue;
Upper bound, phase space, Weyl law, homogeneous space, conformal}

\maketitle

\section{Introduction}

In this article we consider the eigenvalues of 
self-adjoint, second-order elliptic
partial differential  operators defined on a subdomain of
a Riemannian manifold $(M,g)$ of dimension $\nu\ge 2$.  
{The model for the operators we
are able to treat is}
the Laplacian on a domain with Neumann boundary conditions,
defined in the weak sense, i.e. via 
the Laplacian energy 

\begin{equation}\label{Lapformsetup}\nonumber
\frac{\int_\Omega{(|\nabla^g \varphi({\bf x})|^2 } dv_g}{\int_\Omega{|\varphi({\bf x})|^2 dv_g}}
\end{equation}
on functions $\varphi \in H^1(\Omega)$,
but the class treated includes a large variety of
Schr\"odinger operators, even with weights.
Specifically,
the eigenvalues we shall discuss are operationally
defined by the min-max
procedure applied to expressions of the general form
\begin{equation}\nonumber
\mathcal{R}(\varphi) := \frac{\int_\Omega{(|\nabla^g \varphi({\bf x})|^2 + V({\bf x})|\varphi({\bf x})|^2 )}
 e^{-2 \theta({\bf x})} dv_g}{\int_\Omega{|\varphi({\bf x})|^2 e^{-2 \rho({\bf x})} dv_g}}.
\end{equation}
For convenience we set $w= e^{2(\rho-\theta)}$ so that $ \mathcal{R} $ 
takes on the form
\begin{equation}\label{basicsetup}
\mathcal{R}(\varphi) = \frac{\int_\Omega{(|\nabla^g \varphi({\bf x})|^2 + V({\bf x})|\varphi({\bf x})|^2 )}
w({\bf x}) e^{-2 \rho({\bf x})} dv_g}{\int_\Omega{|\varphi({\bf x})|^2 e^{-2 \rho({\bf x})} dv_g}}.
\end{equation}
Here $\rho \in C^1(\Omega)$, 
{$0 < C \le w({\bf x}) \in C^0(\Omega)$,}
and $V \in {\rm Lip}(\Omega)$
are real-valued functions. 
We define the Neumann eigenvalues of
\eqref{basicsetup} by the min-max principle \cite{BeBe,Th3}, {\it i.e.},
\begin{equation}\label{NeumannDef}
\mu_\ell := \max_{\{{\rm subspace \,} \mathfrak{S}: \, \dim(\mathfrak{S}) = \ell\}} \,\,\,
\min_{\{\varphi \in H^1(\Omega): \varphi \perp \mathfrak{S}, \|\varphi\|_{L^2} = 1\}} {\mathcal{R}(\varphi)}.
\end{equation}

Of course,  $\mu_\ell $ depends on the domain $\Omega$ as well as the choice of the metric $g$, 
{the density $ e^{-2 \rho}$ and weight $w$, and the potential $V$, but dependence on these 
quantities will not be indicated explicitly unless necessary.}
 
Under suitable regularity assumptions on $\Omega$ and $V$, the sequence $\{\mu_\ell\}$ is nothing but the spectrum of   the eigenvalue problem 

\begin{equation}\label{Operatror}
H\varphi =\mu\varphi  \qquad \mbox{in }\ \Omega
\end{equation}
with Neumann boundary conditions if $\partial \Omega \ne \emptyset$, where
\begin{eqnarray}\label{Operatror1}
 H\varphi &=& - e^{2 \rho}\mbox {div}_g\left( w e^{-2 \rho} \nabla^g \varphi\right)   + V w\varphi\\
 &=&  w \left\{ \Delta_g\varphi   + 2 \left( \nabla^g \theta, \nabla^g \varphi \right)_g + V \varphi  \right\}
 \end{eqnarray}
where $\Delta_g\varphi :=-\mbox {div}_g( \nabla^g \varphi)$ is the Laplace Beltrami operator associated with $g$. 

\smallskip

In the following sections we
derive semiclassically sharp phase-space upper bounds for the sums of the first $k$ eigenvalues
associated with \eqref{basicsetup}.
We also obtain bounds for the corresponding Riesz means and  heat trace.
The following inequalities, which are valid for any bounded  domain $\Omega\subset\R^\nu$, provide a sampling of these bounds :
\begin{equation}\nonumber
\frac{1}{k} \sum_{j=0}^{k-1} \mu_j \le
\frac{4 \pi^2 \nu}{\nu+2}  \left(\frac{k}{|\Omega| \omega_\nu}\right)^{\frac 2 \nu}    \fint_\Omega{w({\bf x}) d^{\nu}x}  +  \fint_\Omega \widetilde{V}({\bf x}){w({\bf x}) d^{\nu}x}   
\end{equation}
and 
\begin{equation}\nonumber
 \sum_{j\ge0} e^{- t\mu_j } \ge    \frac{  \vert\Omega\vert}{(4\pi t)^{\frac\nu 2} } \left(
  \fint_\Omega  w({\bf x})\,d^{\nu}x\right)^{-\frac\nu 2} e^{- t  \fint_\Omega  \widetilde{V}({\bf x}) w({\bf x})\,d^{\nu}x}  ,
\end{equation}
where $\widetilde{V}({\bf x}) := V({\bf x}) + |\nabla \rho|^2({\bf x})$, $\vert\Omega\vert$ is the volume of $\Omega$,  $\omega_\nu$ is the volume of the unit ball in  $\R^\nu$ and, for every  $f\in L^1(\Omega)$, $ \fint_\Omega f({\bf x}) d^{\nu}x=\frac{1}{|\Omega|}\int_\Omega{f({\bf x}) d^{\nu}x}$ is the mean value of $f$ with respect to Lebesgue measure.

\smallskip
When appropriate we remark on the simpler consequences that 
apply under assumptions on 
{$ \rho,\ w$, and $V$.
The case where $V=\rho=0$ and $w=1$ identically, and $M = \R^\nu$
reduces to the situation treated by Kr\"oger
in his ground-breaking work \cite{Kro}}, and this result
was already extended to subdomains of general
homogeneous spaces  by
Strichartz \cite{Str} (see also  \cite{Gal}).  The upper bounds in \cite{Kro,  Str}
are notable for being sharp in the sense of agreeing with the
``semiclassical'' Weyl law,
with the optimal constant.  For the background and context of Weyl-sharp bounds
on sums of Laplacian eigenvalues, we refer to \cite{Lap1,Lap2}.

\smallskip
In this article 
a new, simplified proof is used, and
we considerably enlarge the family of self-adjoint elliptic operators for which semiclassical upper
bounds are proved.
Even when
$V=0$, new cases of interest that are treated include
the Witten Laplacian,
for which $w = 1$; the Laplacian of a conformal metric $\tilde g = \alpha^{-2} g$, for which $e^{-2 \rho} =  \alpha^{-n}$ and 
$w =   \alpha^{2}$;
and the vibrating membrane with
variable density $\gamma({\bf x})$, for which $\gamma({\bf x}) =e^{-2 \rho}$ and $ w =e^{2 \rho}$.

\smallskip
In the last part of the paper, we focus on domains of compact homogeneous Riemannian spaces. We revisit  the inequality of Strichartz (\cite[Theorem 2.2]{Str})  in the light of this new approach and obtain extensions of Strichartz's inequality to the case where the Laplace operator is penalized by a potential in 
the presence of weights. For example, we prove that if $\Omega$ is a   domain of a compact homogeneous Riemannian manifold $(M,g)$, then the eigenvalues $\mu_l$ associated with  \eqref{basicsetup} 
on $\Omega $ satisfy
\begin{equation}\nonumber
 \sum_{j\ge 0} \left(z- \mu_j\right)_+ \ge 
 \frac{ \vert\Omega\vert_g}{\vert M\vert_g}  \sum_{j\ge 0}  \left( z- \tilde\lambda_j   \right)_+
\end{equation}
for all $z\in\R$, and
\begin{equation}\nonumber
 \sum_{j\ge 0} e^{-\mu_j t}\ge \frac{\vert \Omega\vert_g}{\vert M\vert_g}  \sum_{j\ge 0} e^{-\tilde\lambda_j t}
\end{equation}
for all $t>0$, where $ \tilde\lambda_j  = \lambda_j \fint_\Omega w \, dv_g +\fint_\Omega \widetilde{V}w \, dv_g  $, and where the $\lambda_j 's$ are the eigenvalues of the Laplacian on the whole manifold $M$ (see Theorem \ref{gen-homog} and Corollary \ref{heat-homog}).  The extension (stricto sensu) of Strichartz inequality is given in Theorem \ref{sum-homog}
and takes the following form when $\Omega=M$:
\begin{equation}\nonumber
 \sum_{j=0}^{k-1} \mu_j \le \sum_{j=0}^{k-1} \tilde\lambda_j.
 \end{equation}

It is known that without assumptions of regularity, these
variationally defined Neumann eigenvalues for the Laplacian
may have finite points of
accumulation of a quite arbitrary sort, as entertainingly discussed in \cite{HeSeSi}.
In this case the definition \eqref{NeumannDef} would yield
$\mu_\ell = \inf(\sigma_{\rm ess})$ for all $\ell$ greater than some value, and the bounds we shall provide would
become uninteresting.  We note that, for example, the spectrum of the Neumann Laplacian is guaranteed
to have no finite points of accumulation if the boundary is piecewise smooth \cite{HeSeSi}.

\begin{rem}
Before closing this section, we make some further technical remarks about how
to define the Dirichlet and Neumann problems for these elliptic operators
in the weak, or quadratic-form, sense.  In this regard we follow
Edmunds and Evans \cite{EdEv}, where in Chapter VII it is shown that uniformly elliptic quadratic forms, on arbitrary open sets in Euclidean space, determine unique operators via the Friedrichs extension, which, when the domain is sufficiently regular, reduce to the classically defined
operators for the Dirichlet and Neumann problems.  (See also \cite{Wei,ReSi}.)
In particular, defining the quadratic form
\eqref{basicsetup} initially on the Sobolev space $W_0^{1,2}(\Omega)$ corresponds
to Dirichlet boundary conditions, whereas defining it initially on the restrictions to $\Omega$ of functions in the space
$W_0^{1,2}(\R^\nu)$ corresponds to Neumann conditions.  (For domains allowing
a Sobolev extension property the latter set coincides with $W^{1,2}(\Omega)$.)  It is not in general possible to say that the operators thus defined satisfy
boundary conditions in a classical sense, or to guarantee regularity at the boundary.  However, in cases where the boundary is sufficiently regular, integration by parts transforms expressions $\left\langle H \varphi, \varphi\right\rangle$ where $H$ is a classically defined operator into a
quadratic form of the type \eqref{basicsetup} for $\varphi$ in a dense
subset of the Sobolev spaces
corresponding to Dirichlet or respectively Neumann conditions.

There are certainly significant questions of
regularity of the eigenfunctions in the case when $\Omega$ is an arbitrary open set, treated for example in \cite{Agm}, but
they will play no role in the present article.

The extension of the analysis of \cite{EdEv} from $\R^\nu$ to  manifolds is straightforward, because only Hilbert-space structures and locally defined properties of functions and their gradients are used.
\end{rem}

\section{The averaged variational principle}\label{Prel}

In this section we recall the averaged variational principle which will be foundational for this article.  The following is mainly a restatement of Theorem 3.1 of  Harrell-Stubbe \cite{HaSt13}, along
with a characterization of the case of equality.

\begin{thm}\label{AvePrin}
Consider a self-adjoint operator $H$ on a Hilbert space $\mathcal{H}$,
the spectrum of which is discrete at least in its lower portion, so that
$- \infty < \mu_0 \le \mu_1 \le \dots$.
The corresponding orthonormalized eigenvectors
are denoted $\{\mathbf{\psi}^{(\ell)}\}$.  The
closed quadratic form corresponding to $H$
is denoted $Q(\varphi, \varphi)$ for vectors $\varphi$ in
the quadratic-form domain $\mathcal{Q}(H) \subset \mathcal{H}$.
Let $f_\zeta \in \mathcal{Q}(H)$ be
a family of
vectors indexed by
a variable $\zeta$ ranging over
a measure space $(\mathfrak{M},\Sigma,\sigma)$.
Suppose that $\mathfrak{M}_0$ is a
subset of $\mathfrak{M}$.  Then for any  $z\in \R$, 
\begin{equation}\label{RieszVersion}
	\sum_{j}{\left(z - \mu_j\right)_{+} \int_{\mathfrak{M}}\left|\langle\mathbf{\psi}^{(j)}, f_\zeta\rangle\right|^2\,d \sigma}
       \geq
	\int_{\mathfrak{M}_0}{\left(z\| f_\zeta\|^2 - Q(f_\zeta,f_\zeta) \right) d \sigma},
\end{equation}
provided that the integrals converge. Moreover, equality holds in  \eqref{RieszVersion} for $z\in\R$ if and only if
up to sets of measure $0$,
$$\left\{f_\zeta \ ; \ \zeta \in {\mathfrak{M}_0} \right\}\subset E(z)\quad\mbox{and} \quad \left\{f_\zeta \ ; \ \zeta \in \mathfrak{M}\setminus\mathfrak{M}_0 \right\}\perp E_0(z),$$
where $E(z)=\bigoplus_{\mu\le z}\ker(H-\mu I)$ and $E_0(z)=\bigoplus_{\mu< z}\ker(H-\mu I)$.
\end{thm}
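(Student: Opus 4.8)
The plan is to deduce \eqref{RieszVersion} from a pointwise-in-$\zeta$ inequality coming from the spectral theorem, then integrate that inequality over $\mathfrak{M}_0$, interchange summation and integration by Tonelli's theorem (legitimate because every summand is non-negative), and finally enlarge the domain of integration on the right-hand side from $\mathfrak{M}_0$ to $\mathfrak{M}\supseteq\mathfrak{M}_0$, which can only increase it. So the first and central step is to prove that for every $f\in\mathcal{Q}(H)$,
\begin{equation}\label{pw}
z\|f\|^2-Q(f,f)\ \le\ \sum_j (z-\mu_j)_+\,\bigl|\langle\psi^{(j)},f\rangle\bigr|^2 ,
\end{equation}
valid whenever $z$ does not exceed the bottom of the essential spectrum of $H$ (the case of interest, in which the left side of \eqref{RieszVersion} involves only genuine eigenvalues).

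To establish \eqref{pw} I would decompose $f=\sum_j\langle\psi^{(j)},f\rangle\,\psi^{(j)}+h$ with $h$ orthogonal to every eigenvector; since the spectrum is discrete below $\sigma_{\mathrm{ess}}(H)$, the spectral measure of $h$ is supported in $[\inf\sigma_{\mathrm{ess}}(H),\infty)\subseteq[z,\infty)$. Because $Q(\psi^{(j)},h)=\mu_j\langle\psi^{(j)},h\rangle=0$, expanding $z\|f\|^2-Q(f,f)$ gives
\begin{equation}\label{Qid}
z\|f\|^2-Q(f,f)=\sum_j(z-\mu_j)\,\bigl|\langle\psi^{(j)},f\rangle\bigr|^2+\bigl(z\|h\|^2-Q(h,h)\bigr),
\end{equation}
where $z\|h\|^2-Q(h,h)=\int(z-\lambda)\,d\|E_\lambda h\|^2\le 0$ since $\lambda\ge z$ on the spectral support of $h$, while $\sum_j(z-\mu_j)|\langle\psi^{(j)},f\rangle|^2\le\sum_j(z-\mu_j)_+|\langle\psi^{(j)},f\rangle|^2$; together these give \eqref{pw}. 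Identity \eqref{Qid} also shows that the difference between the right and left sides of \eqref{pw} equals $\sum_{\mu_j>z}(\mu_j-z)|\langle\psi^{(j)},f\rangle|^2+\bigl(Q(h,h)-z\|h\|^2\bigr)$, a sum of two non-negative quantities; I will need this for the equality case.

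Assembling the pieces — integrate \eqref{pw} with $f=f_\zeta$ over $\mathfrak{M}_0$, apply Tonelli on the right, then enlarge $\mathfrak{M}_0$ to $\mathfrak{M}$ — yields \eqref{RieszVersion}. For the characterization of equality one tracks where each of these steps is tight, assuming as in the hypotheses that the integrals are finite. Equality in \eqref{RieszVersion} forces equality $\sigma$-a.e.\ in \eqref{pw}; by the difference formula above this means $\langle\psi^{(j)},f_\zeta\rangle=0$ for all $j$ with $\mu_j>z$ and $Q(h,h)=z\|h\|^2$, and the latter, since the spectral measure of $h$ sits in $[z,\infty)$, forces $h\in\ker(H-zI)$, so $f_\zeta\in E(z)$ for $\sigma$-a.e.\ $\zeta\in\mathfrak{M}_0$. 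Equality also forces the mass discarded in the enlargement to vanish, i.e.\ $\int_{\mathfrak{M}\setminus\mathfrak{M}_0}|\langle\psi^{(j)},f_\zeta\rangle|^2\,d\sigma=0$ for every $j$ with $(z-\mu_j)_+>0$, which says $f_\zeta\perp\psi^{(j)}$ for $\sigma$-a.e.\ $\zeta\in\mathfrak{M}\setminus\mathfrak{M}_0$ and all $j$ with $\mu_j<z$, i.e.\ $f_\zeta\perp E_0(z)$. Conversely, these two conditions turn every inequality in the chain into an equality, giving the stated characterization.

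The argument is short, so there is no single hard step; the points calling for care rather than routine computation are the legitimacy of the term-by-term integration — which rests on non-negativity of the summands together with the (tacit) measurability of $\zeta\mapsto\langle\psi^{(j)},f_\zeta\rangle$ and convergence of the integrals — and the bookkeeping of the equality case, where essentially all of the modest effort lies.
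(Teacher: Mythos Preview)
Your proof is correct and follows the same overall architecture as the paper's: establish a pointwise inequality $z\|f\|^2 - Q(f,f) \le \sum_j (z-\mu_j)_+|\langle\psi^{(j)},f\rangle|^2$, integrate it over $\mathfrak{M}_0$, and enlarge the domain of integration to $\mathfrak{M}$. The equality analysis is also organized the same way.

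The one noteworthy difference is in how you reach the pointwise inequality. The paper projects $f$ only onto the finite-dimensional span of $\psi^{(0)},\dots,\psi^{(k-1)}$ (where $k$ is minimal with $z\le\mu_k$) and uses the variational inequality $Q(f-P_{k-1}f,f-P_{k-1}f)\ge \mu_k\|f-P_{k-1}f\|^2$; this avoids the full spectral theorem. You instead decompose $f$ against \emph{all} eigenvectors plus a remainder $h$ whose spectral measure lives in $[\inf\sigma_{\mathrm{ess}},\infty)$, and handle $h$ via $\int(\lambda-z)\,d\|E_\lambda h\|^2\ge 0$. Your route is a touch more general-purpose (it makes the role of the essential spectrum explicit and lets the equality discussion read off directly from the spectral-measure identity), while the paper's is slightly more elementary (no spectral integrals, only a finite projection and the min--max characterization). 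Both impose the same implicit restriction $z\le\inf\sigma_{\mathrm{ess}}(H)$, which you state and the paper leaves tacit.
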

Taking $z=\mu_k$ in \eqref{RieszVersion} we obtain 
\begin{equation}\label{kroeger-2}
\begin{split}
    \mu_{k} \bigg(\int_{\mathfrak{M}_0}\|f_\zeta\|^2 \,d \sigma&-\sum_{j=0}^{k-1}\int_{\mathfrak{M}}|\langle\mathbf{\psi}^{(j)}, f_\zeta\rangle|^2\,d \sigma\bigg)\\
&\leq \int_{\mathfrak{M}_0}{Q(f_\zeta,f_\zeta) d \sigma}-\sum_{j=0}^{k-1}\mu_j\int_{\mathfrak{M}}|\langle\mathbf{\psi}^{(j)}, f_\zeta\rangle|^2\,d \sigma,\\
\end{split}
\end{equation}

\begin{rems}

{1.  The averaged variational principle is an abstract version and sharpening 
of ideas appearing in various place in the literature, including not only \cite{Kro}, but also 
work of Lieb and others on coherent states and trace inequalities \cite{Lie,LiSo}.
In special cases, similar use
of averaging and tight frames for the study of eigenvalue sums and related quantities 
has also been  made by Laugesen and Siudeja \cite{LaSi}.}

\smallskip
\noindent 2.  We point out that the normalization of the test function $f_\zeta$ 
could be incorporated into the measure, so that, for example, Eq. \eqref{RieszVersion} could alternatively be written in terms of integrals of expectation values such as
\begin{equation}\label{RieszVersionAlt}
\int\left(\frac{|\langle\mathbf{\psi}^{(j)}, f_\zeta\rangle|^2}{\|f_\zeta\|^2}\right)\,d \sigma,
\end{equation}
{\rm i.e.}, over norms of projections of the eigenfunctions.  Despite the suggestiveness of these
alternatives, an advantageous feature of \eqref{kroeger-2}-\eqref{RieszVersion} that
we shall later exploit is that useful identities are available for averages of norms of 
some choices of $f_\zeta$.
Still, if the test functions $f_\zeta$ and the measure space $\mathfrak{M}$ 
constitute a {\rm tight frame}, in the sense of satisfying a generalized Parseval identity 
\cite{HeWa}, then alternative forms of the inequalities imply appealing
variational principles for sums and Riesz means of eigenvalues, as captured in the next corollary.

\end{rems}

\begin{cor}\label{Aveprin1}
Under the assumptions of the Theorem, 
{suppose further that
$f_\zeta$ is a nonvanishing family of test functions with the property that 
for all $\phi \in \mathcal{H}$, 
$$\int_{\mathfrak{M}}\frac{|\langle\mathbf{\phi}, f_\zeta\rangle|^2}{\|f_\zeta\|^2}\,d \sigma = A \|\mathbf{\phi}\|^2$$
for a fixed constant $A > 0$.  Then for any $\mathfrak{M}_0 \subset \mathfrak{M}$ such that $\left(|\mathfrak{M}_0|- A \, k \right)\mu_k\ge 0$,
\begin{equation}\label{withParseval}
\frac{1}{k} \sum_{j=0}^{k-1}{\mu_j} \le \frac 1 {|\mathfrak{M}_0|} \int_{\mathfrak{M}_0}{\left(\frac{Q(f_\zeta,f_\zeta)}{\|f_\zeta\|^2}\right)} d \sigma.
\end{equation}
For Riesz means,
\begin{equation}
	\sum_{j}{\left(z - \mu_j \right)_{+}}   \geq
	\frac{1}{A} \int_{\mathfrak{M}_0}{\left(z - \frac{Q(f_\zeta,f_\zeta)}{\| f_\zeta\|^2} \right) d \sigma}.
\end{equation}
}
\end{cor}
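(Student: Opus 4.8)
The plan is to absorb the normalizing factor $\|f_\zeta\|^{-2}$ into the measure and then apply Theorem~\ref{AvePrin} directly. Since the family $f_\zeta$ is nonvanishing, one may introduce the measure $d\nu(\zeta):=\|f_\zeta\|^{-2}\,d\sigma(\zeta)$ on $\mathfrak M$; the hypothesis of the corollary then states exactly that $\{f_\zeta\}_{\zeta\in\mathfrak M}$ is a continuous tight frame for $\mathcal H$ with respect to $\nu$, that is, $\int_{\mathfrak M}|\langle\phi,f_\zeta\rangle|^2\,d\nu=A\|\phi\|^2$ for all $\phi\in\mathcal H$. Specializing this to each orthonormal eigenvector yields $\int_{\mathfrak M}|\langle\psi^{(j)},f_\zeta\rangle|^2\,d\nu=A$ for every $j$, which is the one identity that makes the estimates collapse to their stated closed forms.

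The Riesz-mean inequality is then immediate from \eqref{RieszVersion} applied with $\sigma$ replaced by $\nu$: by the tight-frame identity the left-hand side becomes $A\sum_j(z-\mu_j)_+$, and, since $d\nu=\|f_\zeta\|^{-2}d\sigma$, the right-hand side becomes $\int_{\mathfrak M_0}\bigl(z-Q(f_\zeta,f_\zeta)/\|f_\zeta\|^2\bigr)\,d\sigma$; dividing by $A$ finishes it. Convergence of the $\nu$-integrals is inherited, since $Q(f_\zeta,f_\zeta)/\|f_\zeta\|^2$ is the very quantity already assumed integrable over $\mathfrak M_0$.

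For the bound on $\frac1k\sum_{j=0}^{k-1}\mu_j$ I would apply \eqref{kroeger-2} with $\nu$ in place of $\sigma$. Using $\int_{\mathfrak M_0}\|f_\zeta\|^2\,d\nu=\sigma(\mathfrak M_0)=|\mathfrak M_0|$, the tight-frame identity for the eigenvector overlaps, and $\int_{\mathfrak M_0}Q(f_\zeta,f_\zeta)\,d\nu=\int_{\mathfrak M_0}Q(f_\zeta,f_\zeta)\|f_\zeta\|^{-2}\,d\sigma$, inequality \eqref{kroeger-2} reduces to
\[
\mu_k\bigl(|\mathfrak M_0|-Ak\bigr)\;\le\;\int_{\mathfrak M_0}\frac{Q(f_\zeta,f_\zeta)}{\|f_\zeta\|^2}\,d\sigma\;-\;A\sum_{j=0}^{k-1}\mu_j .
\]

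It remains to rewrite this as \eqref{withParseval}, and this bookkeeping is the only subtle point --- the place where the hypothesis $(|\mathfrak M_0|-Ak)\mu_k\ge0$ is used. Dividing the displayed inequality by $|\mathfrak M_0|$ and setting $t:=Ak/|\mathfrak M_0|$ shows that $\frac1{|\mathfrak M_0|}\int_{\mathfrak M_0}Q(f_\zeta,f_\zeta)\|f_\zeta\|^{-2}\,d\sigma$ is bounded below by $t\bigl(\frac1k\sum_{j=0}^{k-1}\mu_j\bigr)+(1-t)\mu_k$. Since the eigenvalues are nondecreasing, $\frac1k\sum_{j=0}^{k-1}\mu_j\le\mu_k$; and in the regime of interest ($\mu_k\ge0$, which is automatic for the Neumann-type operators to which the corollary is applied) the hypothesis forces $|\mathfrak M_0|\ge Ak$, i.e.\ $t\in[0,1]$, so that lower bound, being a convex combination of $\frac1k\sum_{j=0}^{k-1}\mu_j$ and the no-smaller quantity $\mu_k$, is itself at least $\frac1k\sum_{j=0}^{k-1}\mu_j$. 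The main obstacle, modest as it is, is precisely this last step: one must deploy the sign of $|\mathfrak M_0|-Ak$ correctly so that the normalization comes out as $1/|\mathfrak M_0|$ rather than $1/(Ak)$, and one should check that replacing $\sigma$ by $\nu$ does not spoil the convergence needed in Theorem~\ref{AvePrin}.
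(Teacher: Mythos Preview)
Your approach---absorbing $\|f_\zeta\|^{-2}$ into the measure and then invoking Theorem~\ref{AvePrin} and \eqref{kroeger-2}---is precisely what the paper intends: the remark immediately preceding the corollary explicitly anticipates this renormalization, and the paper's own ``proof'' consists of the single sentence ``immediate; see \cite{HaSt13}.'' Your caution about the sign of $|\mathfrak{M}_0|-Ak$ is well placed and more careful than the paper itself: the convex-combination step you give indeed requires $t\in[0,1]$, i.e.\ $|\mathfrak{M}_0|\ge Ak$, which under the stated hypothesis is equivalent to $\mu_k\ge 0$, and that is the only regime in which the corollary is ever applied in the paper.
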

\noindent
The proof of Corollary \ref{Aveprin1}
is immediate; see \cite{HaSt13} for more in this connection.  To make our exposition self-contained, we provide here the proof of
the inequality \eqref{RieszVersion} in Theorem \ref{AvePrin} before discussing the case of equality.

\begin{proof}[Proof of Theorem \ref{AvePrin}]
For every integer $l\ge 0$, we denote by $P_{l} $ the orthogonal projector onto the subspace spanned by $\{{\psi}^{(j)}\ , \ j\le l\}$, i.e. $P_{l} f= \sum_{j=0}^l \langle {\psi}^{(j)}, f\rangle {\psi}^{(j)}$. Let  $z\in\R$, $z> \mu_0$ (the inequality \eqref{RieszVersion} being obvious for $z\le \mu_0$), and let $k$ be the smallest integer such that $z\le \mu_k$ (that is $z\in(\mu_{k-1}, \mu_k]$). Then
\begin{equation}\label{rayleigh-quot}
z\Vert f-P_{k-1} f\Vert^2\le\mu_k \Vert f-P_{k-1} f\Vert^2\le Q(f-P_{k-1} f,  f-P_{k-1} f ),
\end{equation}
and, after direct computations,
$$z\left(\Vert f\Vert^2-\Vert P_{k-1} f\Vert^2\right)\le Q(f,  f ) -Q(P_{k-1} f,  P_{k-1} f ).$$
{
With $\Vert P_{k-1} f\Vert^2=\sum_{j=0}^{k-1} \langle {\psi}^{(j)}, f\rangle^2$ and $Q(P_{k-1} f,  P_{k-1} f )=\sum_{j=0}^{k-1}\mu_j \langle {\psi}^{(j)}, f\rangle^2$, 
this yields}
$$z \Vert f\Vert^2-Q(f,  f ) \le \sum_{j=0}^{k-1} (z-\mu_j)\langle {\psi}^{(j)}, f\rangle^2 .$$
Applying this last inequality
to $f_\zeta$, $\zeta\in\mathfrak{M}_0$, and integrating over $\mathfrak{M}_0$ we get
\begin{equation}\label{kroeger-3}
\begin{split}
z \int_{\mathfrak{M}_0}\Vert f_\zeta\Vert^2 d \sigma-\int_{\mathfrak{M}_0}Q( f_\zeta,  f_\zeta ) d \sigma &\le \sum_{j=0}^{k-1} (z-\mu_j)\int_{\mathfrak{M}_0}|\langle {\psi}^{(j)}, f_\zeta\rangle|^2  d \sigma\\
&= \sum_{j\ge 0} (z-\mu_j)_+\int_{\mathfrak{M}_0}|\langle {\psi}^{(j)}, f_\zeta\rangle|^2  d \sigma.
\end{split}
\end{equation}
The inequality \eqref{RieszVersion} follows from \eqref{kroeger-3} and the obvious inequality
\begin{equation}\label{obv-ineq}
\sum_{j\ge 0} (z-\mu_j)_+\int_{\mathfrak{M}_0}|\langle {\psi}^{(j)}, f_\zeta\rangle|^2  d \sigma\le  \sum_{j\ge 0} (z-\mu_j)_+\int_{\mathfrak{M}}|\langle {\psi}^{(j)}, f_\zeta\rangle|^2  d \sigma.
\end{equation}

Assume now that equality holds in \eqref{RieszVersion}. This implies  that equality holds in \eqref{obv-ineq} and in \eqref{rayleigh-quot} for $f=f_\zeta$ for almost all  $\zeta\in\mathfrak{M}_0$. Equality in \eqref{rayleigh-quot} holds for  $f$ either when $z<\mu_k$ and $f=P_{k-1} f$ or if $z=\mu_k$ and $H(f-P_{k-1} f)=\mu_k(f-P_{k-1} f)$, which implies in both cases that $f\in E(z)$.  On the other hand, equality in \eqref{obv-ineq}  implies that, for almost all $\zeta\in\mathfrak{M}\setminus\mathfrak{M}_0$ and all $j\in\N$ such that $\mu_j<z$, $f_\zeta$ is orthogonal to $\mbox{span}\{{\psi}^{(0)},\dots,{\psi}^{(j)} \}$, which means that $f_\zeta$ is orthogonal to $E_0(z)$. 

Conversely, under the conditions of the statement, 
$$\sum_{j}{\left(z - \mu_j\right)_{+} \int_{\mathfrak{M}}\left|\langle\mathbf{\psi}^{(j)}, f_\zeta\rangle\right|^2\,d \sigma}
= \sum_{j}{\left(z - \mu_j\right)_{+} \int_{\mathfrak{M}_0}\left|\langle\mathbf{\psi}^{(j)}, f_\zeta\rangle\right|^2\,d \sigma}$$
$$=\sum_{\mu_j\le z}  z\int_{\mathfrak{M}_0}\left|\langle\mathbf{\psi}^{(j)}, f_\zeta\rangle\right|^2\,d \sigma - \sum_{\mu_j\le z} \mu_j \int_{\mathfrak{M}_0}\left|\langle\mathbf{\psi}^{(j)}, f_\zeta\rangle\right|^2\,d \sigma $$
$$=  \sum_{j=0}^{+\infty} z\int_{\mathfrak{M}_0}\left|\langle\mathbf{\psi}^{(j)}, f_\zeta\rangle\right|^2\,d \sigma - \sum_{j=0}^{+\infty} \mu_j \int_{\mathfrak{M}_0}\left|\langle\mathbf{\psi}^{(j)}, f_\zeta\rangle\right|^2\,d \sigma $$
$$=\int_{\mathfrak{M}_0}\left( z \Vert f_\zeta\Vert^2 -Q( f_\zeta,f_\zeta)\right) \,d \sigma. $$
\end{proof}

\smallskip

As the guiding example for this article,
when $\Omega $  is a bounded subdomain of $\R^\nu$,  we may use test functions of the form
 $$
f_\zeta({\bf x}) := \frac{1}{(2 \pi)^{\nu/2}} e^{i {\bf p} \cdot {\bf x}},
$$
where $\zeta$ has been equated to ${\bf p}$, which ranges over
$\mathfrak{M} = \R^{\nu}$ with Lebesgue measure (The reason for distinguishing $\zeta$ logically from ${\bf p}$
will be made clear in Theorem \ref{PSBound}.). Indeed,
$\|f_\zeta\|^2 = \frac{|\Omega|}{(2 \pi)^\nu}$ for all $\zeta$, where $|\Omega|$ is the  Euclidean volume  of $\Omega$, 
 and Parseval's identity gives 
$$\int_{\R^{\nu}}{\left|\left\langle\phi, f_\zeta\right\rangle\right|^2 d^{\nu}p}
=\|\phi\|^2.
$$ 
Hence,   applying  Corollary \ref{Aveprin1} with $\mathfrak{M}_0\subset \mathfrak{M}$ taken to be  the Euclidean ball of radius $ 2\pi \left(\frac k{\vert \Omega\vert \omega_\nu}\right)^{\frac1 \nu} $,  we recover Kröger's inequality for Neumann eigenvalues of the Euclidean Laplacian (here $\omega_\nu$ stands for the  volume of the $\nu$-dimensional Euclidean unit ball). Indeed, in this case, the Rayleigh quotient of $f_\zeta$ is simply given by ${\mathcal R}(f_\zeta) = \vert {\bf p} \vert^2$ and \eqref{withParseval} yields
$$\frac{1}{k} \sum_{j=0}^{k-1} \mu_j \le  \frac 1 {|\mathfrak{M}_0|} \int_{\mathfrak{M}_0} \vert {\bf p} \vert^2 d^{\nu}p=
\frac{4 \pi^2 \nu}{\nu+2}  \left(\frac{k}{|\Omega| \omega_\nu}\right)^{\frac 2 \nu}. $$

This approach can be
{applied
to easily extend}
Kr\"oger's inequality to Neumann eigenvalues on a bounded subdomain of $\R^\nu$ in 
the presence of nontrivial potential and weights. 

\begin{ex}\label{noh}
Let $\mu_0 \le \mu_1 \le \dots$ be the variationally defined Neumann eigenvalues \eqref{NeumannDef}
on a bounded open set $\Omega \subset \R^\nu$ endowed with the standard Euclidean metric, where
$w$, $\rho$, and $V$ satisfy the assumptions stated above.
Then
\begin{equation}\label{EucUpperbdnoh}
\frac{1}{k} \sum_{j=0}^{k-1} \mu_j \le
\frac{4 \pi^2 \nu}{\nu+2}  \left(\frac{k}{|\Omega| \omega_\nu}\right)^{\frac 2 \nu}    \fint_\Omega{w({\bf x}) d^{\nu}x}  +  \fint_\Omega \widetilde{V}({\bf x}){w({\bf x}) d^{\nu}x}   ,
\end{equation}
where {$\widetilde{V}({\bf x}) := V({\bf x}) + |\nabla \rho|^2({\bf x})$ } and, for every  $f\in L^1(\Omega)$, $ \fint_\Omega f({\bf x}) d^{\nu}x=\frac{1}{|\Omega|}\int_\Omega{f({\bf x}) d^{\nu}x}$ is the mean value of $f$ with respect to Lebesgue measure.
\end{ex}
Example \ref{noh} sets the stage for a more general result that we obtain in Section \ref {Riem-sect} in the context of   Riemannian manifolds. We also stress that these estimates will be improved in Section \ref{phase-space-sect}, with the aid of a coherent-state
analysis relating the upper bounds to phase-space volumes.

\smallskip

Upper bounds for individual Neumann eigenvalues $\mu_j$
are also obtainable 
from the averaged variational principle. 
In order to somewhat simplify the bound, 
let us define the shifted Neumann eigenvalues
\begin{equation}\label{ev-normalized}
  \widetilde{\mu_j}:=\mu_j- \fint_\Omega\widetilde{V}({\bf x}){w({\bf x}) d^{\nu}x}  .
\end{equation}
In terms of these quantities, we will be able to show that (see Corollary \ref{mu_bd_general})
\begin{equation} \label{EucUpperbdnoh-cor}
  \widetilde{\mu_k}\leq 4 \pi^2 \left(\frac{k}{|\Omega| \omega_\nu}\right)^{\frac 2 \nu}
  \left(1+ 2\sqrt{\frac{1-S_k}{\nu+2}}\right)\fint_\Omega{w({\bf x}) d^{\nu}x},
\end{equation}
where
\begin{equation}
  S_k:=\frac{\frac{1}{k}\sum_{j=0}^{k-1} \widetilde{\mu_j}}{\frac{4 \pi^2\nu}{\nu+2} \left(\frac{k}{|\Omega| \omega_\nu}\right)^{\frac 2 \nu}\fint_\Omega{w({\bf x}) d^{\nu}x}}\leq 1.
\end{equation}

\section{Bounds for Neumann eigenvalues on domains of Riemannian manifolds}\label{Riem-sect}

Let $(M,g)$ be a Riemannian manifold of dimension $\nu\ge 2$ and let $\Omega $ be a bounded subdomain of $M$. Of course, when $M$ is a closed manifold, $\Omega$ can be equal to the whole of $M$. 

Let $F:(M,g)\to \R^N$, be an isometric embedding (whose existence for sufficiently large $N$ is guaranteed by Nash's embedding Theorem). 
To any function $u\in L^2(\Omega)$, we associate the function $\hat u_F :\R^N\to\R$ defined by
\begin{equation}\label{generalFT}
\hat u_F ({\bf p}) =\int_\Omega u({\bf x}) e^{i {\bf p}\cdot F({\bf x})} dv_g,
\end{equation}
where the dot stands for the Euclidean inner product in $\R^N$ (i.e., $\hat u_F $ is the Fourier transform of the signed measure $F_*(udv_g)$ supported by $F(\Omega)$). It is well-known, since  the works of Hörmander, Agmon-Hörmander and others 
(see \cite[Theorem 2.1]{AgHo} \cite[Theorem 7.1.26]{Hor}, \cite[Corollary 5.2]{Str1}), that  there exists a constant $C_{F(\Omega)}$ such that, $\forall u\in L^2(\Omega)$ and $\forall R>0$,
\begin{equation}\label{Horm}
 \int_{B_R}\vert \hat u_F ({\bf p})\vert^2 d^Np \le C_{F(\Omega)} R^{N-\nu} \Vert u\Vert^2,
 \end{equation}
where $B_R$ is the Euclidean ball of radius $R$ in $\R^N$ centered at the origin and $\Vert u\Vert^2 = \int_\Omega u^2 dv_g$.
{In other words the Fourier functions appearing in \eqref{generalFT}
constitute a frame that is not generally tight.}

We define the Riemannian constant $H_\Omega$ by 
\begin{equation}\label{def-Horm}
H_\Omega = \inf_{N\ge \nu}\ \inf_{F\in I(M,\R^N)} \left(\frac{\nu+2}{N+2}\right)^{\frac{\nu}{2}}\frac1{\omega_N}\ C_{F(\Omega)},
  \end{equation}
where  $I(M,\R^N)$ is the set of isometric embeddings from $(M,g)$ to $\R^N$
. 

When $\Omega$ is a domain of $\R^\nu$, we may take for $F$ the identity map so that, $\forall u\in L^2(\Omega)$, $\hat u_I$ is nothing but the Fourier transform of $u$ extended by zero outside $\Omega$.  Using Parseval's identity we get
$\forall R>0$,
$$ \int_{B_R}\vert \hat u_F ({\bf p})\vert^2 d^\nu p \le \int_{\R^\nu}\vert \hat u_F ({\bf p})\vert^2 d^\nu p = (2\pi)^\nu\Vert u\Vert^2.$$
Thus  $C_{I(\Omega)}= (2\pi)^\nu $ and 
\begin{equation}\label{H-Euc}
H_\Omega\le  \frac{(2\pi)^\nu}{\omega_\nu} .
\end{equation}
In all the sequel, the notation $\vert\Omega\vert_g$ will designate the Riemannian volume of $\Omega$ with respect to $g$.  We will also use the notation $\fint_\Omega f\,dv_g $ to represent the mean value of  a function $f\in L^1(\Omega)$ with respect to the Riemanian measure $dv_g$.
(I.e., $\fint_\Omega f\,dv_g =\frac{1}{\vert\Omega\vert_g}\int_\Omega f\,dv_g $.)

\begin{thm}\label{general}
Let $(M,g)$ be a Riemannian manifold of dimension $\nu\ge 2$.
Let $\mu_l=\mu_l(\Omega, g, \rho,w,V)$, $l\in\N$, be the eigenvalues defined by \eqref{NeumannDef}
on a bounded open set $\Omega \subset M$,  where
$w$, $\rho$, and $V$ satisfy the assumptions stated above.
Then

\smallskip

\noindent (1) For all $z\in\R$,
\begin{equation}\label{Lieb-Th_gen}
 \sum_{j\ge 0} \left(z- \mu_j\right)_+ \ge \frac{2\  \vert\Omega\vert_g}{(\nu+2) H_\Omega} \left(
  \fint_\Omega  w\,dv_g\right)^{-\frac\nu 2}\left( z- \fint_\Omega \widetilde{V}w \, dv_g    \right)_+^{1+\frac\nu 2},
\end{equation}
where $\widetilde{V} = V+ \vert\nabla^g\rho\vert^2$.

\smallskip

\noindent (2) For all $k\in\N$,
\begin{equation}\label{Kroger_gen}
\frac{1}{k} \sum_{j=0}^{k-1} \mu_j \le \frac{\nu}{\nu+2}   \left(\frac{H_\Omega }{|\Omega|_g}k\right)^{\frac 2 \nu} \fint_\Omega w\,dv_g
+\fint_\Omega  \widetilde{V} w\,dv_g .   
\end{equation}

\smallskip

\noindent (3) For all $t>0$,
\begin{equation}\label{heat_gen}
 \sum_{j\ge0} e^{- t(\mu_j - \fint_\Omega  \widetilde{V} w\,dv_g)} \ge  \left(\frac\pi{t}\right)^{\frac\nu2} \  \frac{  \vert\Omega\vert_g}{\omega_\nu H_\Omega} \left(
  \fint_\Omega  w\,dv_g\right)^{-\frac\nu 2}.   
\end{equation}
\end{thm}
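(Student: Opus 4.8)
The plan is to apply the averaged variational principle of Theorem~\ref{AvePrin} to the operator associated with the quadratic form \eqref{basicsetup}, viewed on $\mathcal H=L^2(\Omega,e^{-2\rho}\,dv_g)$, using the ``twisted plane waves''
\[
  f_{\mathbf p}(\mathbf x):=e^{\rho(\mathbf x)}\,e^{i\,\mathbf p\cdot F(\mathbf x)},\qquad \mathbf p\in\mathfrak M=\R^N,
\]
as the test family, where $F\in I(M,\R^N)$ is an isometric embedding; the conformal twist $e^{\rho}$ is exactly what promotes $V$ to $\widetilde V=V+|\nabla^g\rho|^2$. Three elementary computations are needed. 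Since $|f_{\mathbf p}|^2=e^{2\rho}$, we have $\|f_{\mathbf p}\|^2=|\Omega|_g$ for every $\mathbf p$. Since $\nabla^g f_{\mathbf p}=e^{\rho}e^{i\mathbf p\cdot F}\bigl(\nabla^g\rho+i\,\mathbf p\cdot\nabla^g F\bigr)$ and the real and imaginary parts of the bracket are $g$-orthogonal, $|\nabla^g f_{\mathbf p}|^2=e^{2\rho}\bigl(|\nabla^g\rho|^2+|\mathbf p\cdot\nabla^g F|^2\bigr)$, whence $Q(f_{\mathbf p},f_{\mathbf p})=\int_\Omega\bigl(\widetilde V+|\mathbf p\cdot\nabla^g F|^2\bigr)w\,dv_g$. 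Finally $\langle\phi,f_{\mathbf p}\rangle_{\mathcal H}=\widehat{(\phi e^{-\rho})}_F(-\mathbf p)$ with $\|\phi e^{-\rho}\|_{L^2(dv_g)}=\|\phi\|_{\mathcal H}$, so the H\"ormander--Agmon bound \eqref{Horm} gives $\int_{B_R}|\langle\phi,f_{\mathbf p}\rangle|^2\,d^Np\le C_{F(\Omega)}R^{N-\nu}\|\phi\|^2$ for every $R>0$.

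To prove (1), I would average the form over a Euclidean ball: the moment identity $\fint_{B_R}p_ap_b\,d^Np=\frac{R^2}{N+2}\delta_{ab}$ together with the isometric-embedding identity $\sum_a|\nabla^g F_a|^2=g^{ij}g_{ij}=\nu$ gives $\fint_{B_R}|\mathbf p\cdot\nabla^g F|^2\,d^Np=\frac{\nu R^2}{N+2}$ pointwise on $\Omega$, so that $\int_{B_R}Q(f_{\mathbf p},f_{\mathbf p})\,d^Np=\omega_N R^N\bigl(\int_\Omega\widetilde V w\,dv_g+\frac{\nu R^2}{N+2}\int_\Omega w\,dv_g\bigr)$ and $\int_{B_R}\|f_{\mathbf p}\|^2\,d^Np=\omega_N R^N|\Omega|_g$. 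Applying \eqref{RieszVersion} with $\mathfrak M=\mathfrak M_0=B_R$ (all integrals converge since $B_R$ and $f_{\mathbf p}$ are bounded) and bounding its left-hand side from above by $C_{F(\Omega)}R^{N-\nu}\sum_j(z-\mu_j)_+$ via the frame estimate, one obtains
\[
  C_{F(\Omega)}R^{N-\nu}\sum_j(z-\mu_j)_+\ \ge\ \omega_N R^N\Bigl(z|\Omega|_g-\int_\Omega\widetilde V w\,dv_g-\tfrac{\nu R^2}{N+2}\int_\Omega w\,dv_g\Bigr).
\]
Dividing by $C_{F(\Omega)}R^{N-\nu}$ and maximizing $R^\nu(a-bR^2)$ over $R>0$ (the maximum is $\frac{2a}{\nu+2}\bigl(\frac{\nu a}{(\nu+2)b}\bigr)^{\nu/2}$, attained at $R^2=\frac{\nu a}{(\nu+2)b}$, the inequality being vacuous when $a\le0$) produces, after collecting constants, the inequality \eqref{Lieb-Th_gen} with the factor $\frac{\omega_N}{C_{F(\Omega)}}\bigl(\frac{N+2}{\nu+2}\bigr)^{\nu/2}$ in place of $1/H_\Omega$; taking the infimum over $N$ and over $F\in I(M,\R^N)$ as in \eqref{def-Horm} yields part (1).

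Parts (2) and (3) follow from (1) by convex/Laplace duality, with no further geometric input. For (2) I would use the identity $\sum_{j=0}^{k-1}\mu_j=\sup_{z\in\R}\bigl(kz-\sum_{j\ge0}(z-\mu_j)_+\bigr)$, substitute the lower bound of (1), and carry out the resulting one-variable maximization in $z$; since the right-hand side of \eqref{Lieb-Th_gen} is a pure power of $\bigl(z-\fint_\Omega\widetilde V w\,dv_g\bigr)_+$, this Legendre transform loses nothing and reproduces \eqref{Kroger_gen} exactly. For (3) I would use $e^{-t\mu}=t^2\int_{\mu_0}^{\infty}e^{-tz}(z-\mu)_+\,dz$ (valid for $\mu\ge\mu_0$), sum over $j$ by Tonelli, insert (1), and evaluate the resulting $\Gamma$-integral; here one uses $\mu_0\le\fint_\Omega\widetilde V w\,dv_g$ (take $\varphi=e^{\rho}$ in \eqref{NeumannDef}) so that the profile is not truncated, together with $\omega_\nu=\pi^{\nu/2}/\Gamma(1+\tfrac\nu2)$, to reach \eqref{heat_gen}. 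The one genuine difficulty is the first reduction: because the frame \eqref{generalFT} is not tight, Corollary~\ref{Aveprin1} is unavailable and one must work with \eqref{RieszVersion} on the truncated index set $B_R$, using \eqref{Horm} to absorb the eigenfunction-projection terms; everything downstream is careful bookkeeping of constants and elementary one-variable optimizations.
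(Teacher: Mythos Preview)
Your argument is correct and follows essentially the same route as the paper: the same twisted plane waves $e^{\rho}e^{i\mathbf p\cdot F}$ are plugged into the averaged variational principle with $\mathfrak M=\mathfrak M_0=B_R$, the H\"ormander--Agmon bound \eqref{Horm} replaces the missing Parseval identity, and the same one-variable optimization in $R$ followed by the infimum over $F,N$ produces \eqref{Lieb-Th_gen}. Two small remarks. First, your justification ``the real and imaginary parts of the bracket are $g$-orthogonal'' is not the right reason: for real tangent vectors $a,b$ one has $|a+ib|_g^2=g(a+ib,a-ib)=|a|_g^2+|b|_g^2$ regardless of orthogonality, so the cross term vanishes automatically (the paper instead keeps a spurious linear-in-$\mathbf p$ term and kills it by the symmetry $\int_{B_R}\mathbf p\cdot v\,d^Np=0$; either way the outcome is the same). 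Second, for part~(2) the paper does not pass through the Legendre transform but goes back to the pre-optimized inequality \eqref{Riesz-gen} at $z=\mu_k$ and chooses $R$ so that $\frac{|\Omega|_g\omega_N R^\nu}{C_{F(\Omega)}}=k$; your Legendre-duality route is exactly the alternative the authors record in Remark~\ref{rem_leg}, and it recovers \eqref{Kroger_gen} with the same constants. Your observation $\mu_0\le\fint_\Omega\widetilde V w\,dv_g$ (from the test function $e^{\rho}$) cleanly justifies the lower limit in the Laplace-transform step for~(3), which the paper uses without comment.
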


\begin{proof}
Let $F:(M,g)\to\R^N$ be an isometric embedding. 
For simplicity, we identify the domain $\Omega$ with its image $F(\Omega)\subset R^N$ and any function $u:\Omega\to \R$ with $u\circ F^{-1}:F(\Omega)\to\R$.

\smallskip
We apply Theorem \ref{AvePrin}  using test functions of the form
$$
f_\zeta({\bf x}) := e^{i {\bf p} \cdot {\bf x} + \rho({\bf x})},
$$
where 
$\zeta$ has been equated to ${\bf p}$, which ranges over 
$\mathfrak{M}=B_R\subset \R^N$ endowed with Lebesgue measure, where $B_R$ is a Eucidean $N$-dimensional ball whose radius $R$ is to be determined later. Our Hilbert space here is $L^2(\Omega, e^{-2\rho } dv_g)$ (endowed with the norm $\Vert u\Vert^2= \int_\Omega u^2 e^{-2\rho } dv_g$). Hence, for all $\zeta$,
$$ \Vert f_\zeta\Vert^2 = |\Omega|_g$$
and consequently
\begin{equation}\label{term1}
\int_{\mathfrak{M}} \Vert f_\zeta\Vert^2 d^{N}p =  |\Omega|_g \omega_NR^N.
\end{equation}
On the other hand, in our case the quadratic form is 
$$Q(f_\zeta, f_\zeta) = \int_\Omega \left( \vert\nabla^\tau f_\zeta\vert^2+ V \vert f_\zeta\vert^2\right ) w({\bf x}) \ e^{-2\rho({\bf x}) } dv_g,$$
where $\nabla^\tau  f_\zeta$ is the tangential part of the gradient of $f_\zeta$ (more generally, for all $v\in \R^N$, $v^\tau$ will designate the tangential vector field induced on $\Omega$ by orthogonal projection of $v$). Thus, with $\vert f_\zeta\vert^2=e^{2\rho } $ and $\vert\nabla^\tau f_\zeta\vert^2 =\vert {\bf p}^\tau+\nabla^\tau\rho\vert^2$, 
$$Q(f_\zeta, f_\zeta) = \int_\Omega \left(  \vert{\bf p}^\tau \vert^2({\bf x}) +2{\bf p}\cdot\nabla^\tau\rho({\bf x})\right) w({\bf x})   dv_g+ \int_\Omega (V+\vert\nabla^\tau \rho\vert^2 )  w({\bf x}) dv_g.$$
Observe that for symmetry reasons, for all $v\in \R^N\setminus\{0\}$,
$$\int_{B_R}{\bf p}\cdot v\,d^{N}p =0$$
and, after elementary calculations,
$$\int_{B_R}({\bf p}\cdot v)^2\,d^{N}p = \frac{\vert v\vert}{N}\int_{B_R}\vert{\bf p} \vert^2\,d^{N}p = \frac{\vert v\vert}{N+2}\omega_NR^{N+2}.$$
Thus, if $\{v_1,\dots,v_\nu\}$ is an orthonormal basis of the tangent space of $\Omega$ at a point ${\bf x}$, then
$$ \int_\Omega  \vert{\bf p}^\tau \vert^2({\bf x})\,d^{N}p = \sum_{j\le\nu}\int_{B_R}({\bf p}\cdot v_j)^2\,d^{N}p = \frac{\nu}{N+2}\omega_NR^{N+2}.$$
This leads to 
\begin{equation}\label{term2}
\int_{\mathfrak{M}} Q(f_\zeta, f_\zeta)  d^{N}p =  \frac{\nu}{N+2}\omega_NR^{N+2}\int_\Omega w\,dv_g 
+ \omega_NR^{N}\int_\Omega  \widetilde{V} w\,dv_g .
\end{equation}
It remains to deal with the integrals $\int_{\mathfrak{M}}|\langle\mathbf{\psi}^{(j)}, f_\zeta\rangle|^2\,d^Np$, where  $\{\psi^{(j)}\}$ is an $L^2(\Omega, e^{-2\rho } dv_g )$-orthonormal basis of eigenfunctions associated to $\{\mu_j\} $. Setting $\phi^{(j)}=  e^{-\rho }{\psi}^{(j)}$,
$$\langle\mathbf{\psi}^{(j)}, f_\zeta\rangle = \int_\Omega f_\zeta {\psi}^{(j)}  e^{-2\rho } dv_g =  \int_\Omega e^{i {\bf p}\cdot{\bf x}} {\psi}^{(j)}  e^{-\rho } dv_g =\hat\phi_F^{(j)} ({\bf p}).$$
 Using \eqref{Horm} we obtain
  \begin{equation}\label{term3}
  \int_{B_R}|\langle\mathbf{\psi}^{(j)}, f_\zeta\rangle|^2\,d^Np = \int_{B_R}|\hat\phi_F^{(j)} ({\bf p})|^2\,d^Np\le C_{F(\Omega)} R^{N-\nu} \int_\Omega \vert\phi^{(j)}\vert^2 dv_g $$
 $$\qquad \qquad \qquad \qquad= C_{F(\Omega)} R^{N-\nu}\int_\Omega \vert\psi^{(j)}\vert^2 e^{-2\rho } dv_g = C_{F(\Omega)} R^{N-\nu}.
 \end{equation}


We put \eqref{term1}, \eqref{term2}, and \eqref{term3} into \eqref{RieszVersion} after choosing ${\mathfrak{M}}_0={\mathfrak{M}}=B_R$, and obtain for all $R>0$ and $z\in \R$
 \begin{equation}\label{Riesz-gen}
 \sum_{j\ge 0} \left(z- \mu_j\right)_+ C_{F(\Omega)}   \ge  |\Omega|_g \omega_NR^{\nu}\left(
 z - \frac{\nu R^{2}}{N+2} \fint_\Omega  w\,dv_g - \fint_\Omega  \widetilde{V} w\,dv_g\right).
\end{equation}
The right side of this inequality is optimized when $R=0$ if $z  \le \fint_\Omega  \widetilde{V} w\,dv_g $ and when $R^2=\frac{N+2}{\nu+2}\left(z  - \fint_\Omega  \widetilde{V} w\,dv_g\right)/\fint_\Omega   w\,dv_g$ otherwise. Thus 
  \begin{equation}\label{Riesz-gen1}
   \begin{split}
 \sum_{j\ge0} &\left(z  - \mu_j\right)_+ C_{F(\Omega)} \ge\\
   & |\Omega|_g \omega_N\left(\frac{N+2}{\nu+2}\right)^{\frac\nu 2}\frac{2}{\nu+2} \left(
  \fint_\Omega  w\,dv_g\right)^{-\frac\nu 2} \left(
 z  - \fint_\Omega  \widetilde{V} w\,dv_g\right)_+^{1+\frac\nu 2}.\\
 \end{split}
 \end{equation}
Taking the infimum with respect to $F$ and $N$ we get \eqref{Lieb-Th_gen}. 

\smallskip

To prove \eqref{Kroger_gen} we first observe that taking $z=\mu_k$ in \eqref{Riesz-gen} gives 
 \begin{equation}\label{Kro_gen1}
k\mu_k  -\sum_{j=0}^{k-1}  \mu_j   \ge \frac{ |\Omega|_g \omega_NR^{\nu}}{ C_{F(\Omega)}}\left(
 \mu_k - \frac{\nu R^{2} }{N+2} \fint_\Omega  w\,dv_g- \fint_\Omega  \widetilde{V} w\,dv_g\right)
\end{equation}
for all $R>0$, or
 \begin{equation*}
\sum_{j=0}^{k-1}  \mu_j  \le  \left( k -\frac{ |\Omega|_g \omega_NR^{\nu}}{ C_{F(\Omega)}}\right)\mu_k  
+\frac{ |\Omega|_g \omega_NR^{\nu}}{ C_{F(\Omega)}}\left( \frac{\nu R^{2} }{N+2} \fint_\Omega  w\,dv_g+ \fint_\Omega  \widetilde{V} w\,dv_g\right).
\end{equation*}
Choosing $R$ such that $\frac{ |\Omega|_g \omega_NR^{\nu}}{ C_{F(\Omega)}}=k$ we get
\begin{equation}\label{Kro_gen5}
\sum_{j=0}^{k-1}  \mu_j  \le k  \left( \frac{\nu}{N+2}\left(\frac{C_{F(\Omega)}k }{|\Omega|_g \omega_N}\right)^{\frac 2\nu} \fint_\Omega  w\,dv_g+ \fint_\Omega  \widetilde{V} w\,dv_g\right),
\end{equation}
which leads to \eqref{Kroger_gen} after taking the infimum with respect to $F$ and $N$. 

\smallskip

The inequality \eqref{heat_gen} is a consequence of \eqref{Lieb-Th_gen} and the following identity 
relating the heat trace to the Laplace transform of the Riesz mean :
\begin{equation}\label{laplace}
\sum_{j\ge 0}{e^{- \mu_j t}} = t^2 \int_0^\infty{e^{-z t} \sum_{j\ge 0}(z-\mu_j)_+}dz.
\end{equation}

\end{proof}

\begin{rems}\label{rem_leg} 
1. In \cite[Theorem 1.2]{LiTa}, Li and Tang obtained for the Laplacian (i.e. in the case $V=\rho=0$, $w=1$) an inequality which is similar to but weaker than \eqref{Kro_gen5}. Indeed, instead of  the term $\frac\nu {N+2}$ in the right side  their inequality appears with  $\frac N {N+2}$.

\smallskip

\noindent 2. It is possible to derive \eqref{Kroger_gen} from \eqref{Lieb-Th_gen} using Legendre transform. Indeed, the Legendre transform of a function $f$ of the form $f(z)=A(z-B)_{+}^{1+\frac \nu 2}$ with $A>0$ is given by
$$f^{\wedge}(  p ) = \sup_{z\ge 0} \left( pz-f(z) \right) = \left(\frac 2A\right)^{\frac 2\nu} \frac {\nu}{(\nu+2)^{{1+\frac2 \nu }} }  p^{1+\frac2 \nu } + Bp,$$
 while the Legendre transform of $ g(z)= \sum_{j\ge 0} \left(z- \mu_j\right)_+ $ is $$g^{\wedge}(  p )=\sum_{j=0 }^{\lfloor p\rfloor-1} \mu_j +(p-\lfloor p\rfloor)\mu_{ \lfloor p\rfloor}.$$
(Indeed, for $z\in [ \mu_{k-1}, \mu_k]$, $pz-g(z) = (p-k)z+\sum_{j=0}^{k-1} \mu_j$ which is nondecreasing as soon as $k\le \lfloor p\rfloor$.) Hence, it suffices to apply Legendre transform to both sides of \eqref{Lieb-Th_gen} taking into account that such a  transform is inequality-reversing. 
\end{rems} 
 \begin{cor}\label{mu_bd_general} Under the assumptions of Theorem \ref{general}, for any positive integer $k$,
\begin{equation} \label{mu_upperbd}
  \widetilde{\mu_k}\leq   \left(1+ 2\sqrt{\frac{1-S_k}{\nu+2}}\right)\left(\frac{ H_\Omega }{|\Omega|_g }k\right)^{\frac 2\nu}\fint_\Omega  w\, dv_g ,
\end{equation}
where 
$$\widetilde{\mu_k} = \mu_k- \fint_\Omega  \widetilde{V} w\,dv_g\quad  \mbox{and}\quad S_k:=\frac{\frac{1}{k}\sum_{j=0}^{k-1} \widetilde{\mu_j}}{ \left(\frac{H_\Omega k}{|\Omega|_g}\right)^{\frac 2 \nu} \fint_\Omega w\,dv_g
}.$$
\end{cor}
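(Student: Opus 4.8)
The plan is to apply the averaged variational principle exactly as in the proof of Theorem~\ref{general}, but at the threshold value $z=\mu_k$, and then to trade the exponent $1+\tfrac{\nu}{2}$ for the stated square root by an elementary convexity estimate. First I would set aside the trivial case $\widetilde{\mu_k}\le0$, in which the right-hand side of \eqref{mu_upperbd} is already positive; so assume $\widetilde{\mu_k}>0$.

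Put $T:=\left(\tfrac{H_\Omega k}{|\Omega|_g}\right)^{2/\nu}\fint_\Omega w\,dv_g$. Since $S_k$ measures $\tfrac1k\sum_{j=0}^{k-1}\widetilde{\mu_j}$ against the right-hand side of the Kr\"oger bound \eqref{Kroger_gen}, one has $\tfrac1k\sum_{j=0}^{k-1}\widetilde{\mu_j}=\tfrac{\nu}{\nu+2}\,S_k\,T$ with $S_k\le1$. Now use \eqref{Lieb-Th_gen} with $z=\mu_k$: the left-hand side is exact, $\sum_{j\ge0}(\mu_k-\mu_j)_+=k\mu_k-\sum_{j=0}^{k-1}\mu_j=k\widetilde{\mu_k}-\sum_{j=0}^{k-1}\widetilde{\mu_j}$, and on the right $\left(\mu_k-\fint_\Omega\widetilde Vw\,dv_g\right)_+^{1+\nu/2}=\widetilde{\mu_k}^{\,1+\nu/2}$; dividing through by $kT$ and using the definition of $T$ turns \eqref{Lieb-Th_gen} into the one-variable inequality
\[
  \psi\!\left(\frac{\widetilde{\mu_k}}{T}\right)\ \ge\ \frac{\nu}{\nu+2}\,S_k,
  \qquad \psi(y):=y-\frac{2}{\nu+2}\,y^{1+\nu/2}.
\]
(The same relation comes out of \eqref{Kro_gen1} directly: choose the Fourier ball $B_R$ so that its mode count $|\Omega|_g\omega_N R^{\nu}/C_{F(\Omega)}$ equals $\alpha k$ for some $\alpha>1$, divide by the now-negative factor $k-\alpha k$ to isolate $\widetilde{\mu_k}$, take the infimum over isometric embeddings through \eqref{def-Horm}, and optimize over $\alpha$.)

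To extract \eqref{mu_upperbd} I would study $\psi$ on $[0,\infty)$: from $\psi'(y)=1-y^{\nu/2}$, $\psi$ rises on $[0,1]$ to its maximum $\psi(1)=\tfrac{\nu}{\nu+2}$ and then falls to $-\infty$, so $\{\,y\ge0:\psi(y)\ge\tfrac{\nu}{\nu+2}S_k\,\}=[y_-,y_+]$ with $y_+\ge1$, and the displayed inequality forces $\widetilde{\mu_k}/T\le y_+$. As $\psi$ is decreasing on $[1,\infty)$, it suffices to check $\psi(1+\delta)\le\tfrac{\nu}{\nu+2}S_k$ for $\delta:=2\sqrt{\tfrac{1-S_k}{\nu+2}}$; substituting $S_k=1-\tfrac{(\nu+2)\delta^2}{4}$ and clearing denominators, this is exactly
\[
  (1+\delta)^{1+\nu/2}\ \ge\ 1+\frac{\nu+2}{2}\,\delta+\frac{\nu(\nu+2)}{8}\,\delta^2,
\]
the second-order Taylor lower bound for $t\mapsto(1+t)^{1+\nu/2}$ at $t=0$, which holds for all $\delta\ge0$ because $1+\tfrac\nu2\ge2$ makes the second derivative of this map nonnegative and nondecreasing on $[0,\infty)$. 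Hence $\widetilde{\mu_k}\le(1+\delta)T$, i.e.\ \eqref{mu_upperbd}.

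The only non-routine point is this last step: one must recognize that the implicit endpoint $y_+$ delivered by the variational inequality is dominated by the explicit quantity $1+2\sqrt{(1-S_k)/(\nu+2)}$, and isolate the elementary convexity inequality for $(1+t)^{1+\nu/2}$ that makes this work uniformly in $\nu\ge2$ — and that inequality is an equality to second order at $t=0$, so it is no stronger than it has to be. Everything preceding it is a direct specialization of the computations already carried out for Theorem~\ref{general}.
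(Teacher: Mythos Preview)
Your argument is sound, and your route differs from the paper's in a useful way. One point to flag: the corollary as printed sets $S_k=\bigl(\tfrac{1}{k}\sum\widetilde{\mu_j}\bigr)/T$, whereas your line $\tfrac{1}{k}\sum\widetilde{\mu_j}=\tfrac{\nu}{\nu+2}S_kT$ matches the earlier Euclidean display (with the Kr\"oger constant in the denominator) rather than the corollary's own definition. In fact the paper's proof only closes with \emph{your} normalization --- the infimum over $F,N$ that produces the line before \eqref{Kro_gen3} should carry a factor $\tfrac{\nu}{\nu+2}$ on the $\sigma^{1+2/\nu}$ term, exactly as in the passage from \eqref{Kro_gen5} to \eqref{Kroger_gen} --- so your reading is the intended one and the printed $S_k$ appears to be a typo.

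On the method: you specialize the already-optimized Riesz bound \eqref{Lieb-Th_gen} at $z=\mu_k$ to get the scalar inequality $\psi(y)\ge\tfrac{\nu}{\nu+2}S_k$ for $y=\widetilde{\mu_k}/T$, and then dominate the larger root of $\psi=\tfrac{\nu}{\nu+2}S_k$ by $1+\delta$ via the second-order Taylor minorant of $(1+t)^{1+\nu/2}$ (valid since the exponent is $\ge2$). The paper instead keeps the free radius from \eqref{Kro_gen1}, rewrites it as an upper bound $\tfrac{1}{p}\,\tfrac{\sigma^p-S_k}{\sigma-1}$ over a parameter $\sigma>1$ with $p=\tfrac{\nu+2}{\nu}$, substitutes $\sigma=1+\alpha(1-S_k)^{1/p}$, uses the concavity bound $(1+s)^{p-1}\le1+(p-1)s$ (exponent $\le1$) under an integral, and optimizes in $\alpha$. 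The two schemes are dual --- at the paper's optimal $\sigma$ one has $y=\sigma^{p-1}$, and your equation $\psi(y_+)=\tfrac{\nu}{\nu+2}S_k$ is exactly the paper's first-order condition in $\sigma$ --- so neither is sharper; your Taylor step is more transparent and exploits convexity of $(1+t)^q$ for $q\ge2$, while the paper's integral form exploits concavity for $q\le1$. Your parenthetical about going through \eqref{Kro_gen1} with $\alpha k$ modes is precisely the paper's path.
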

Notice that according to Theorem \ref{general} (2), $S_k\le 1$.
\begin{proof} [Proof of Corollary \ref{mu_bd_general}] We  take back the proof of Theorem \ref{general} and rewrite  \eqref{Kro_gen1}  as follows:  For every positive $R$, 
\begin{equation}\label{Kro_gen2}
 k\widetilde\mu_k  -\sum_{j=0}^{k-1}  \widetilde\mu_j   \ge \frac{ |\Omega|_g \omega_NR^{\nu}}{ C_{F(\Omega)}} \left(
 \widetilde\mu_k - \frac{\nu R^{2} }{N+2} \fint_\Omega  w\,dv_g\right),
\end{equation}
which yields
$$\left(  \frac{ |\Omega|_g \omega_NR^{\nu}}{ C_{F(\Omega)}}- k\right) \widetilde\mu_k \le \frac{\nu }{N+2} \frac{ |\Omega|_g \omega_NR^{\nu+2}}{ C_{F(\Omega)}} \fint_\Omega  w\,dv_g -  \sum_{j=0}^{k-1}  \widetilde\mu_j .$$
With the change of variable $\sigma:=\displaystyle \frac{ |\Omega|_g\omega_N}{C_{F(\Omega)} k} R^{\nu}$ the last inequality reads
$$(\sigma-1) \widetilde{\mu_k}\leq   \frac{\nu}{N+2}\left(\frac{C_{F(\Omega)}k }{|\Omega|_g \omega_N}\right)^{\frac 2\nu}\fint_\Omega  w\,dv_g\ \sigma^{1+\frac 2\nu} - \frac 1k \sum_{j=0}^{k-1}  \widetilde\mu_j$$
for all $\sigma >1$.
Taking the infimum with respect to $F$ and $N$ and using \eqref{def-Horm}, we get
$$ (\sigma-1) \widetilde{\mu_k}\leq   \left(\frac{H_\Omega k }{|\Omega|_g }\right)^{\frac 2\nu}\fint_\Omega  w\,dv_g\ \sigma^{1+\frac 2\nu} - \frac 1k \sum_{j=0}^{k-1}  \widetilde\mu_j$$
$$\qquad \qquad = \left(\frac{H_\Omega k }{|\Omega|_g }\right)^{\frac 2\nu}\fint_\Omega  w\,dv_g\left( \sigma^{1+\frac 2\nu} - S_k\right).$$
That is
\begin{equation}\label{Kro_gen3}
\widetilde{\mu_k}\leq   \left(\frac{H_\Omega k }{|\Omega|_g }\right)^{\frac 2\nu}\fint_\Omega  w\,dv_g\frac{\sigma^{1+\frac 2\nu} - S_k }{\sigma-1 }  .
\end{equation}

This inequality can be explicitly optimized 
with respect to $\sigma\in[1,+\infty)$ only when $\nu=2$, and we then
obtain $\sigma_{+}=1+\sqrt{1-S_k}$,
yielding the desired bound.
For general $\nu\geq 2 $ we introduce a new change of variable as follows : $\displaystyle \sigma=1+\alpha z_k$, where $\displaystyle z_k=(1-S_k)^{\frac1{p}}$, $\displaystyle p=\frac{\nu+2}{\nu}$, and $\alpha$ is a free positive parameter. Then the bound \eqref{Kro_gen3} reads:
\begin{equation*}
  \widetilde{\mu_{k}}\leq \, \left(\frac{H_\Omega k }{|\Omega|_g }\right)^{\frac 2\nu}\fint_\Omega  w\,dv_g \ \frac{(1+\alpha z_k)^{p}-1+z_k^p}{\alpha z_k}.
\end{equation*}
Since $1< p\leq 2$ for all $\nu\geq 2$, it follows that
\begin{equation*}
\begin{split}
  \frac1{p}\,\frac{(1+\alpha z_k)^{p}-1+z_k^p}{\alpha z_k}&=\frac{1}{\alpha z_k}\int_0^{\alpha z_k}(1+s)^{p-1}ds+\frac{z_k^{p-1}}{p\alpha}\\
  &\leq\frac{1}{\alpha z_k}\int_0^{\alpha z_k}(1+(p-1)s)\,ds+\frac{z_k^{p-1}}{p\alpha}\\
  &=1+\frac{(p-1)\alpha z_k}{2}+\frac{z_k^{p-1}}{p\alpha}.\\
 \end{split}
\end{equation*}
Optimizing with respect to $\alpha$ leads to choose $\displaystyle \alpha^2=\frac{2z_k^{p-2}}{p(p-1)}$. Thus,
 $$\frac1{p}\ \frac{(1+\alpha z_k)^{p}-1+z_k^p}{\alpha z_k}\le 1 +\sqrt {2}\sqrt {\frac{p-1}p}z_k^{\frac p2} = 1+2 \frac{\sqrt {1-S_k}}{\sqrt {\nu+2}},$$
which implies the the desired inequality.

\end{proof}

\begin{cor}\label{mu_bd_pos} Under the assumptions of Theorem \ref{general}, for any integer $k\in\N$ such that $\sum_{j=0}^{k-1} {\mu_j}\ge 0$,
\begin{equation} \label{mu_upperbd_pos}
  {\mu_k}\left(1- \frac{\fint_\Omega  \widetilde{V} w\,dv_g}{\mu_k}  \right)_+^{1+\frac 2\nu}  \leq     \left(\frac{\nu+2}{2}\right)^{\frac 2\nu} \left(\frac{ H_\Omega }{|\Omega|_g } k\right)^{\frac 2\nu}\fint_\Omega  w\, dv_g .
\end{equation}
In particular,
\begin{equation} \label{mu_upperbd_pos1}
\mu_k\le \max\left\{ 2\fint_\Omega  \widetilde{V} w\,dv_g\ ; \ 2 \left(\nu+2\right)^{\frac 2\nu} \left(\frac{ H_\Omega }{|\Omega|_g } k\right)^{\frac 2\nu}\fint_\Omega  w\, dv_g 
\right\}.
\end{equation}
\end{cor}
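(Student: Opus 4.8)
The plan is to specialize the Riesz-mean inequality \eqref{Lieb-Th_gen} of Theorem \ref{general} at $z=\mu_k$ and read off an upper bound for $\mu_k$ itself. Throughout, write $w_\Omega:=\fint_\Omega w\,dv_g$ and $\widetilde{V}_\Omega:=\fint_\Omega\widetilde{V}w\,dv_g$. First I would note that the hypothesis $\sum_{j=0}^{k-1}\mu_j\ge 0$ already forces $\mu_k\ge 0$: since $\mu_0\le\dots\le\mu_{k-1}\le\mu_k$, if $\mu_k<0$ all of $\mu_0,\dots,\mu_{k-1}$ would be negative and the sum could not be nonnegative. Moreover, when $\mu_k=0$ the two claimed inequalities hold trivially (their left-hand sides vanish), so one may assume from the outset that $\mu_k>0$.

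Next, because $\mu_j\le\mu_k$ for $j\le k-1$, the left-hand side of \eqref{Lieb-Th_gen} evaluated at $z=\mu_k$ is exactly $\sum_{j\ge 0}(\mu_k-\mu_j)_+=k\mu_k-\sum_{j=0}^{k-1}\mu_j$. Dropping the nonnegative sum $\sum_{j=0}^{k-1}\mu_j$ by hypothesis turns \eqref{Lieb-Th_gen} into
$$k\mu_k\ \ge\ \frac{2\,|\Omega|_g}{(\nu+2)H_\Omega}\,w_\Omega^{-\nu/2}\,\bigl(\mu_k-\widetilde{V}_\Omega\bigr)_+^{1+\frac\nu 2}$$
(equivalently, one may start from \eqref{Kro_gen1} and reuse the optimization over $R$, $F$, $N$ already performed in the proof of Theorem \ref{general}). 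From here the derivation of \eqref{mu_upperbd_pos} is pure algebra: since $1+\frac\nu2=\frac{\nu+2}2$, raising both sides to the power $\frac2\nu$ gives $\bigl(\mu_k-\widetilde{V}_\Omega\bigr)_+^{1+\frac2\nu}\le\bigl(\tfrac{\nu+2}2\bigr)^{2/\nu}\bigl(\tfrac{H_\Omega k}{|\Omega|_g}\bigr)^{2/\nu}w_\Omega\,\mu_k^{2/\nu}$; dividing by $\mu_k^{2/\nu}>0$ and using that, for $\mu_k>0$, the left-hand side equals $\mu_k\bigl(1-\widetilde{V}_\Omega/\mu_k\bigr)_+^{1+2/\nu}$, one obtains exactly \eqref{mu_upperbd_pos}.

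To deduce the ``in particular'' statement \eqref{mu_upperbd_pos1} I would split into two cases. If $\mu_k\le 2\widetilde{V}_\Omega$, then $\mu_k$ is bounded by the first entry of the maximum and there is nothing to prove. If instead $\mu_k>2\widetilde{V}_\Omega$, then $1-\widetilde{V}_\Omega/\mu_k>\tfrac12$, so \eqref{mu_upperbd_pos} yields $\mu_k\,2^{-(1+2/\nu)}<\bigl(\tfrac{\nu+2}2\bigr)^{2/\nu}\bigl(\tfrac{H_\Omega k}{|\Omega|_g}\bigr)^{2/\nu}w_\Omega$, and since $2^{1+2/\nu}\bigl(\tfrac{\nu+2}2\bigr)^{2/\nu}=2(\nu+2)^{2/\nu}$ this is precisely the bound by the second entry of the maximum.

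None of these steps is genuinely hard once Theorem \ref{general} is in hand; the points that need a little care are the identification of the left-hand side of \eqref{Lieb-Th_gen} at $z=\mu_k$ with $k\mu_k-\sum_{j=0}^{k-1}\mu_j$ (which is what allows the hypothesis $\sum_{j=0}^{k-1}\mu_j\ge 0$ to enter), the bookkeeping with the positive-part functions when dividing by a power of $\mu_k$, and the quick disposal of the degenerate case $\mu_k=0$. If there is a ``main obstacle'' at all, it is simply keeping track of the sign of $\mu_k$ so that the divisions by $\mu_k^{2/\nu}$ are legitimate.
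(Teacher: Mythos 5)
Your proposal is correct and follows essentially the same route as the paper: you start from the Riesz-mean bound \eqref{Lieb-Th_gen} evaluated at $z=\mu_k$, identify the left side as $k\mu_k-\sum_{j=0}^{k-1}\mu_j$, drop the nonnegative sum, and do the algebra; the paper instead goes back one step to \eqref{Kro_gen1}, drops the sum there, and then re-performs the optimization over $R$, $F$, $N$ that was already built into \eqref{Lieb-Th_gen}. These are logically equivalent since the hypothesis $\sum_{j=0}^{k-1}\mu_j\ge 0$ is independent of the optimization parameters, so applying it before or after the optimization gives the same inequality.
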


\begin{proof} From the inequality  \eqref{Kro_gen1}  in the proof of Theorem \ref{general}, we deduce with  $\sum_{j=0}^{k-1} {\mu_j}\ge 0$
 that for all $R\ge 0$, 
\begin{equation}\label{Kro_gen4}
 k\mu_k   -    \frac{ |\Omega|_g \omega_NR^{\nu}}{ C_{F(\Omega)}} \left(
 \mu_k - \frac{\nu R^{2} }{N+2} \fint_\Omega  w\,dv_g - \fint_\Omega \widetilde{V} w\,dv_g \right)\ge 0.
\end{equation}
The left side achieves its minimum when $R=0$ if $\mu_k\le \fint_\Omega \widetilde{V} w\,dv_g$ and otherwise when $R^2 =  \frac{N+2}{\nu+2} \left(
 \mu_k - \fint_\Omega \widetilde{V} w\,dv_g \right)/\fint_\Omega  w\,dv_g$. Since \eqref{mu_upperbd_pos} is obviously satisfied when $\mu_k\le \fint_\Omega \widetilde{V} w\,dv_g$, we shall assume $\mu_k> \fint_\Omega \widetilde{V} w\,dv_g$ and get
 $$k\mu_k   -    \frac{ |\Omega|_g \omega_N}{ C_{F(\Omega)}} \left( \frac{N+2}{\nu+2}\right)^{\frac \nu2}\frac{2}{\nu+2} \left(
 \mu_k - \fint_\Omega \widetilde{V} w\,dv_g \right)^{1+\frac\nu2}\left(\fint_\Omega  w\, dv_g\right)^{-\frac \nu2}\ge 0$$
which gives
$$ \left(
 \mu_k - \fint_\Omega \widetilde{V} w\,dv_g \right)^{1+\frac\nu2}\le \frac{ C_{F(\Omega)}} { |\Omega|_g \omega_N}\left( \frac{\nu+2}{N+2}\right)^{\frac \nu2}\frac{\nu+2}2 \left(\fint_\Omega  w\, dv_g\right)^{\frac \nu2} k\mu_k .$$
 Therefore,
 $$  \mu_k^{\frac \nu2}\left(1
- \frac{\fint_\Omega \widetilde{V} w\,dv_g}{\mu_k} \right)^{1+\frac\nu2}\le \frac{ C_{F(\Omega)}} { |\Omega|_g \omega_N}\left( \frac{\nu+2}{N+2}\right)^{\frac \nu2}\frac{\nu+2}2 \left(\fint_\Omega  w\, dv_g\right)^{\frac \nu2} k.$$
Raising to the power $\frac2\nu$ and taking the infimum with respect to $F$ and $N$  we obtain  \eqref{mu_upperbd_pos}.

To prove \eqref{mu_upperbd_pos1} we observe that if $\mu_k>2\fint_\Omega  \widetilde{V} w\,dv_g$, then $1- \frac{\fint_\Omega  \widetilde{V} w\,dv_g}{\mu_k} >\frac 12$, so  we can deduce from \eqref{mu_upperbd_pos} 
$$\left(\frac 12\right)^{1+\frac2\nu} \mu_k\leq     \left(\frac{\nu+2}{2}\right)^{\frac 2\nu} \left(\frac{ H_\Omega }{|\Omega|_g } k\right)^{\frac 2\nu}\fint_\Omega  w\, dv_g .
$$

\end{proof}

\smallskip

Note that when $\rho=V=0$, the inequality \eqref{mu_upperbd_pos} of Corollary \ref{mu_bd_pos} produces
$$
  {\mu_k}  \leq     \left(\frac{\nu+2}{2}\right)^{\frac 2\nu} \left(\frac{ H_\Omega }{|\Omega|_g } k\right)^{\frac 2\nu}\fint_\Omega  w\, dv_g,
$$
which  coincides with Kröger's estimate \cite[Corollary 2]{Kro} when $\Omega$ is a Euclidean domain and $w=1$ (just replace $H_\Omega$ by $\frac{(2\pi)^\nu}{\omega_\nu}$).

\smallskip

Let us highlight some consequences of Theorem  \ref {general} on Schrödinger operators, Witten Laplacians,  and Laplacians associated with conformally Euclidean metrics. 

\begin{ex}[Schrödinger operators]\label{Schrodinger}
 From  \eqref{Kroger_gen} in Theorem \ref{general} and \eqref{mu_upperbd_pos1} in Corollary \ref{mu_bd_pos} we deduce  that  for any Schrödinger operator $\Delta_g+V$ on $\Omega$ and any  integer $k\ge 0$ we find (with $\rho=0$ and $w=1$)
\begin{equation} \label{Kro_Schrod}
\frac{1}{k} \sum_{j=0}^{k-1} \mu_j (\Delta_g+V)\le \frac{\nu}{\nu+2}   \left(\frac{H_\Omega }{|\Omega|_g}k\right)^{\frac 2 \nu} 
+\fint_\Omega  {V} \,dv_g.
\end{equation}
Furthermore, if  $\sum_{j=0}^{k-1} {\mu_j}(\Delta_g+V)\ge 0$,
\begin{equation} \label{Schrod_upperbd}
\mu_k(\Delta_g+V)\le \max\left\{ 2\fint_\Omega  {V} \,dv_g\ ; \ 2 \left(\nu+2\right)^{\frac 2\nu} \left(\frac{ H_\Omega }{|\Omega|_g } k\right)^{\frac 2\nu}
\right\}.
\end{equation}
These estimates are to be compared with \cite[Theorem 2.2 and Corollary 2.8]{EI1}, \cite[Theorem 2.1]{EI2}, and the results by Grigor'yan, Netrusov and Yau \cite[Theorem 5.15 and (1.14)]{GNY} by which, under the assumption  ${\mu_0}(\Delta_g+V) \ge0$,
\begin{equation} \label{Schrod_GNY}
\mu_k(\Delta_g+V)\le C(\Omega) k+  \frac1{\varepsilon(\Omega)} \fint_\Omega  {V} \,dv_g
\end{equation}
where $C(\Omega) >0$ and $\varepsilon(\Omega)  \in(0,1)$ are two Riemannian constants that do not depend on $V$ or $k$.  They ask whether such an estimate holds true with  $\varepsilon(\Omega) =1$. 
The inequality \eqref{Kro_Schrod} answers this question for the eigenvalue sums $ \sum_{j=0}^{k-1} \mu_j $ in the affirmative, without any positivity condition.  

\smallskip
On the other hand, unlike the upper bound in \eqref{Schrod_GNY},   our estimates  \eqref{Kro_Schrod} and \eqref{Schrod_upperbd}  are  consistent with the Weyl law   regarding the power of $k$. Notice that  \eqref{Schrod_GNY} has been recently improved by A. Hassannezhad \cite{Has} who obtained
$$ 
\mu_k(\Delta_g+V)\le  C(\Omega) + A_\nu  \fint_\Omega  {V} \,dv_g + B_\nu \left(\frac{ k }{|\Omega|_g } \right)^{\frac 2\nu}$$
under  the same assumption of positivity of  ${\mu_0}(\Delta_g+V)$,
where  $A_\nu>1$ and $B_\nu$ are two  constants  that only depend on the dimension $\nu$, and  $C(\Omega)$ is a Riemannian constant that does not depend on $V$ or $k$.
Our estimates are valid, however, under weaker assumptions and, moreover, the coefficient in front of $ \fint_\Omega  {V} \,dv_g$ in \eqref{Schrod_upperbd} is equal to 1 while the  other coefficient is explicitly computable at least in the elementary case where $\Omega$ is conformally Euclidean.

\end{ex}

\begin{ex}[Witten Laplacians]\label{Witten} Let $\Omega$ be a bounded domain of a Riemannian manifold $(M,g)$ and let $\Delta_\rho$ be the Witten Laplacian associated with the density $e^{-2\rho}$, that is,
$$\Delta_\rho \varphi = \Delta_g \varphi + 2(\nabla^g\rho, \nabla^g\varphi) .$$
The Neumann eigenvalues $\{\mu_l\}$ of $\Delta_\rho$ in $\Omega $ satisfy the following estimates :

\noindent (1) For all $z\in\R$,
\begin{equation}\label{Lieb-Th_Witten}
 \sum_{j\ge 0} \left(z- \mu_j\right)_+ \ge \frac{2\  \vert\Omega\vert_g}{(\nu+2) H_\Omega} \left( z- \fint_\Omega \vert\nabla^g\rho\vert^2 \, dv_g    \right)_+^{1+\frac\nu 2}.
\end{equation}

\smallskip

\noindent (2) For all $k\in\N^*$,
\begin{equation}\label{Kroger_Witten}
\frac{1}{k} \sum_{j=0}^{k-1} \mu_j \le \frac{\nu}{\nu+2}   \left(\frac{H_\Omega }{|\Omega|_g}k\right)^{\frac 2 \nu} 
+\fint_\Omega  \vert\nabla^g\rho\vert^2\,dv_g .   
\end{equation}

\noindent (3) For all $k\in\N^*$,
\begin{equation} \label{mu_upperbd_pos_Wit}
  {\mu_k}\left(1- \frac{\fint_\Omega  \vert\nabla^g\rho\vert^2\,dv_g}{\mu_k}  \right)_+^{1+\frac 2\nu}  \leq     \left(\frac{\nu+2}{2}\right)^{\frac 2\nu} \left(\frac{ H_\Omega }{|\Omega|_g } k\right)^{\frac 2\nu}.
\end{equation}
In particular,
\begin{equation} \label{mu_upperbd_pos1_Wit}
\mu_k\le \max\left\{ 2\fint_\Omega  \vert\nabla^g\rho\vert^2\,dv_g\ ; \ 2 \left(\nu+2\right)^{\frac 2\nu} \left(\frac{ H_\Omega }{|\Omega|_g } k\right)^{\frac 2\nu}
\right\}.
\end{equation}
This last inequality  is to be compared with the estimates obtained in \cite{CES}.

 \smallskip
 For example, when $\Omega$ is a bounded domain of $\R^\nu$ endowed with the Gaussian density $e^{-\vert x\vert^2/2}$, we have for the corresponding Witten Laplacian
$$ \frac{1}{k} \sum_{j=0}^{k-1} \mu_j \le \frac{\nu}{\nu+2}  \left( 4\pi^2\left(\frac{ k}{|\Omega| \omega_\nu}\right)^{\frac 2 \nu} 
+\frac{ \omega_\nu}{|\Omega|} R^{\nu+2} \right),
 $$
 where $R$ is chosen so that $\Omega$ is contained in the Euclidean ball  $B_R$.

 \end{ex}

 \begin{ex}[Laplacian associated with a conformally Euclidean metric]\label{conf} Let $\Omega$ be a bounded domain of $\R^\nu$  and let $ g=\alpha^{-2} g_E$ be a Riemannian metric that is conformal to the Euclidean metric $g_E$. The Neumann eigenvalues $\{\mu_l\}$ of the Laplacian $\Delta_{ g}$ in $\Omega$ 
 satisfy the following estimates in which $\vert\Omega\vert$ denotes the Euclidean volume of $\Omega$ :

\smallskip
\noindent (1) For all $z\in\R$,
\begin{equation}\label{Lieb-Th_conf}
 \sum_{j\ge 0} \left(z- \mu_j\right)_+ \ge \frac{2 \omega_\nu \vert\Omega\vert}{(\nu+2) (2\pi)^\nu} \left( \fint_\Omega \alpha^{2} \, d^\nu x\right)^{\frac\nu 2} \left( z- \frac{\nu^2}{4}\fint_\Omega \vert\nabla\alpha\vert^2  \, d^\nu x    \right)_+^{1+\frac\nu 2}.
\end{equation}

\smallskip

\noindent (2) For all $k\in\N$,
\begin{equation}\label{Kroger_conf}
\frac{1}{k} \sum_{j=0}^{k-1} \mu_j \le \frac{4\pi^2 \nu}{\nu+2}   \left(\frac{ k}{\omega_\nu |\Omega|}\right)^{\frac 2 \nu} \fint_\Omega \alpha^{2} \, d^\nu x 
+\frac{\nu^2}{4}\fint_\Omega \vert\nabla\alpha\vert^2 \, d^\nu x.   
\end{equation}

\noindent (3) For all $k\in\N$,
\begin{equation} \label{mu_upperbd_pos_conf}
  {\mu_k}\left(1- \frac{\nu^2}{4}\frac{\fint_\Omega \vert\nabla\alpha\vert^2 \, d^\nu x}{\mu_k}  \right)_+^{1+\frac 2\nu}  \leq    4\pi^2 \left(\frac{\nu+2}2\right)^{\frac 2\nu} \left(\frac{ k }{\omega_\nu |\Omega|} \right)^{\frac 2\nu}\fint_\Omega \alpha^{2} \, d^\nu x.
\end{equation}
In particular,
\begin{equation} \label{mu_upperbd_pos1_conf}
\mu_k\le \max\left\{ \frac{\nu^2}{2}\fint_\Omega \vert\nabla\alpha\vert^2 \, d^\nu x\ ; \ 8\pi^2 \left(\nu+2\right)^{\frac 2\nu} \left(\frac{ k }{\omega_\nu |\Omega| } \right)^{\frac 2\nu}\fint_\Omega \alpha^{2} \, d^\nu x
\right\}.
\end{equation}

Note that a domain  of the hyperbolic space $\bf H^\nu$ can be identified with a domain of the Euclidean unit ball endowed with the metric $g=\left(\frac{2}{1-\vert x\vert^2}\right)^2g_E$.  For such a domain we get, with $\alpha= \frac{1-\vert x\vert^2}{2}$,  $\fint_\Omega \alpha^{2} \, d^\nu x\le \frac 14$, and $\fint_\Omega \vert\nabla\alpha\vert^2 \, d^\nu x = \fint_\Omega \vert x\vert^2 \, d^\nu x$, 

\begin{equation}\label{Kroger_hyp}
\frac{1}{k} \sum_{j=0}^{k-1} \mu_j \le \frac{ \pi^2 \nu}{\nu+2}  \left(\frac{ k}{\omega_\nu |\Omega|}\right)^{\frac 2 \nu} +\frac{\nu^2}{4} \fint_\Omega \vert x\vert^2 \, d^\nu x.   
\end{equation}

\begin{equation} \label{mu_upperbd_pos_hyp}
  {\mu_k}\left(1- \frac{\nu^2}{4}\frac{ \fint_\Omega \vert x\vert^2 \, d^\nu x}{\mu_k}  \right)_+^{1+\frac 2\nu}  \leq    \pi^2\left(\frac{\nu+2}2\right)^{\frac 2\nu} \left(\frac{ k }{\omega_\nu |\Omega| } \right)^{\frac 2\nu}
\end{equation}
and
\begin{equation} \label{mu_upperbd_pos1_hyp}
\mu_k\le \max\left\{ \frac{\nu^2}{2}\fint_\Omega \vert x\vert^2\, d^\nu x\ ; \ 2\pi^2 \left(\nu+2\right)^{\frac 2\nu} \left(\frac{ k }{\omega_\nu |\Omega| } \right)^{\frac 2\nu}
\right\}.
\end{equation}
\end{ex}


\section{Sums of Neumann eigenvalues on domains conformal to Euclidean sets,
and phase-space volumes}\label{phase-space-sect}

A phase-space analysis can considerably sharpen the upper bounds on sums
of eigenvalues from the previous sections so that they become sharp in the semiclassical regime.  
Following physical tradition, it is shown in \cite{LiLo} how this may be achieved 
in some circumstances with the aid of coherent 
states.  We carry out such an analysis in this section 
for \eqref{basicsetup} when $(M,g) = (\R^\nu, d^{\nu}x)$.
We must
first introduce a few quantities that will be helpful to relate spectral estimates to phase-space
volumes.  To avoid complications we assume that the potential energy 
$V$ is Lipschitz continuous and bounded from below.
We do not assume that $\Omega$ is necessarily bounded, but if it is not, we require $V$
to be {\em confining} in the sense that there is a radial function $V_{\rm rad}(r)$ tending to $+\infty$ 
as $r \to \infty$ with $V({\bf x}) \ge V_{\rm rad}(|{\bf x}|)$ for all ${\bf x} \notin \Omega$.  This condition 
is sufficient to ensure that the eigenvalues form a discrete sequence tending to $+\infty$.

\begin{defi}
The {\em effective potential} incorporating a correction for the
conformal transformation will be denoted
$\widetilde{V}({\bf x}) := V({\bf x}) + |\nabla \rho|^2({\bf x})$, and
the maximal Lipschitz constant of $\widetilde{V}({\bf x})$
on the region
$\Omega \cap \{{\bf x}: \widetilde{V}({\bf x}) \le \Lambda\}$
will be denoted ${\rm Lip}(\Lambda)$.

The $L^2$-normalized ground-state Dirichlet eigenfunction for the ball of geodesic radius $r$
in $M$
will be denoted $h_r$ and
$\mathcal{K}(h_r) := \int_{B_r}{|\nabla h_r({\bf x})|^2 d^\nu\,x}$.
I.e., in this section where $M = \R^\nu$, $h$ is a scaled Bessel function and
$$
\mathcal{K}(h_r) = \frac{j_{\frac{\nu}{2}-1, 1}^2}{r^2}.
$$
\end{defi}

\begin{rem}
The function $h_r$ will ensure that some coherent-state functions
to be defined below are localized in configuration space.  Its specific form is but one
of many plausible choices.
\end{rem}

We next recall some quantities that arise in phase-space analysis.

\begin{defi}
The {\em Euclidean phase-space volume} for energy $\Lambda$
is defined as
$$
\Phi_1(\Lambda) := \frac{1}{(2 \pi)^\nu}
\Big|({\bf x},{\bf p}): |{\bf p}|^2 + \widetilde{V}({\bf x}) \le \Lambda \Big| =
\frac{\omega_\nu}{(2 \pi)^\nu} \int_{\Omega}{\left(\Lambda -
\widetilde{V}({\bf x})\right)_+^{\frac{\nu}{2}} d^\nu x},
$$
according to a standard
calculation to be found, for example, in \cite{LiLo}. 
If the weight in \eqref{basicsetup} is not constant, we
make use of a
{\em weighted phase-space volume},
$$
\Phi_w(\Lambda) = \frac{\omega_\nu}{(2 \pi)^\nu} \int_{\Omega}{\left(\Lambda - \widetilde{V}({\bf x})\right)_+^{\frac{\nu}{2}}
w({\bf x}) d^\nu x}.$$
The total energy associated with this quantity is correspondingly
\begin{align}
E_w(\Lambda) &:= \frac{1}{(2 \pi)^\nu} \int_{\{({\bf x},{\bf p}): {\bf x} \in \Omega, |{\bf p}|^2 + \widetilde{V}({\bf x}) \le \Lambda\}}
{\left(|{\bf p}|^2 + \widetilde{V}({\bf x})\right) w({\bf x}) d^\nu x} d^\nu p\nonumber\\
&= \frac{\nu}{\nu+2} \frac{\omega_\nu}{(2 \pi)^\nu} \int_{\Omega}
{\left(\Lambda - \widetilde{V}({\bf x})\right)_+^{1+\frac{\nu}{2}} w({\bf x}) d^\nu x}.\label{EvsPhi}
\end{align}
\end{defi}

\noindent
We note that according to \eqref{EvsPhi},
\begin{equation}\label{Ecvx}
\frac{d E_w}{d \Lambda}\left(\Lambda\right) = \Phi_w(\Lambda),
\end{equation}
and that $\Phi_w$
increases strictly monotonically in $\Lambda$, implying that $E_w$ is strictly convex.

\begin{thm}\label{PSBound}
Let $\mu_0 \le \mu_1 \le \dots$ be the variationally defined Neumann eigenvalues \eqref{NeumannDef} 
on an open set $\Omega \in \R^\nu$, where
$w, \rho$, and $V$ satisfy the assumptions stated above,
and define $\Lambda(k)$ as the minimal value
of $\Lambda$ for which
$\Phi_1(\Lambda) \ge (2 \pi)^\nu k$.  Then
\begin{equation}\label{EucUpperbd}
\sum_{j=0}^{k-1} \mu_j \le
E_w(\Lambda(k))
+ 3 \left( 2 j_{\nu-1,1}^2 {\rm Lip}(\Lambda(k))\right)^{\frac 1 3} \Phi_w\left(\Lambda(k) + (2 j_{\nu-1,1}^2 {\rm Lip}(\Lambda(k))^{\frac 1 3}\right).
\end{equation}
{
The Riesz-mean form of the inequality reads}
\begin{align}\label{EucRieszbd}
\sum_{j=0}^{k-1} \left(z - \mu_j\right)_+  \ge \,\,
& \frac{2}{\nu+2} \frac{\omega_\nu}{(2 \pi)^\nu} 
\int{\left(z - V(y)\right)_+^{1+\frac{\nu}{2}} dy}\\
&-\left( \frac{\omega_\nu}{(2 \pi)^\nu} 
\int{\left(z - V(y)\right)_+^{\frac{\nu}{2}} dy}\right) \left(\|\nabla h_r\|^2 + \int{|{\bf x}|h_r^2}\right).
\end{align}
\end{thm}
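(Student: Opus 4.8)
The plan is to feed \emph{coherent-state} test functions into the averaged variational principle (Theorem~\ref{AvePrin}, in the $z=\mu_k$ form \eqref{kroeger-2}), in the phase-space spirit of \cite{LiLo}. I would take the index space to be phase space $\mathfrak M=\R^\nu_{{\bf y}}\times\R^\nu_{{\bf p}}$ with Lebesgue measure, so that $\zeta=({\bf y},{\bf p})$ and the momentum ${\bf p}$ is only one half of the label $\zeta$ (exactly the distinction announced before the theorem). The Hilbert space is $L^2(\Omega,e^{-2\rho}\,d^\nu x)$ and the test functions are
\[
f_{({\bf y},{\bf p})}({\bf x}):=e^{i{\bf p}\cdot{\bf x}}\,h_r({\bf x}-{\bf y})\,e^{\rho({\bf x})}\big|_\Omega ,
\]
where $h_r$ is the localizing ground state of the Definition, extended by $0$ outside $B_r({\bf y})$, and $r>0$ is a length scale to be optimized at the very end. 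The factor $e^{\rho({\bf x})}$ makes the density $e^{-2\rho}$ cancel in every pairing, while the compact support localizes $f_\zeta$ in configuration space.

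The argument rests on three computations. First, $|f_\zeta|^2e^{-2\rho}=h_r({\bf x}-{\bf y})^2$, so $\|f_\zeta\|^2=\int_\Omega h_r({\bf x}-{\bf y})^2\,d^\nu x\le1$. Second, since $\langle\psi^{(j)},f_\zeta\rangle$ is the Fourier transform at ${\bf p}$ of ${\bf x}\mapsto\psi^{(j)}({\bf x})h_r({\bf x}-{\bf y})e^{-\rho({\bf x})}$, Plancherel in ${\bf p}$ and Fubini in ${\bf y}$ give the tight-frame identity $\int_{\R^\nu}\!\int_{\R^\nu}|\langle\phi,f_{({\bf y},{\bf p})}\rangle|^2\,d^\nu p\,d^\nu y=(2\pi)^\nu\|\phi\|^2$, so on $\mathfrak M=\R^{2\nu}$ the left-hand side of \eqref{RieszVersion} collapses to $(2\pi)^\nu\sum_j(z-\mu_j)_+$. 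Third, differentiating and using that $i{\bf p}\,h_r$ is pointwise orthogonal to the real field $\nabla h_r+h_r\nabla\rho$,
\[
\bigl(|\nabla f_\zeta|^2+V|f_\zeta|^2\bigr)e^{-2\rho}=\bigl(|{\bf p}|^2+\widetilde V({\bf x})\bigr)\,h_r({\bf x}-{\bf y})^2+|\nabla h_r({\bf x}-{\bf y})|^2+\nabla_{{\bf x}}\bigl(h_r({\bf x}-{\bf y})^2\bigr)\!\cdot\!\nabla\rho({\bf x}) ,
\]
so $Q(f_\zeta,f_\zeta)$ is the weighted integral of the classical symbol $|{\bf p}|^2+\widetilde V$ evaluated at $({\bf x},{\bf p})$, plus a kinetic remainder $\int_\Omega|\nabla h_r({\bf x}-{\bf y})|^2w\,d^\nu x\approx w({\bf y})\mathcal{K}(h_r)$, plus a cross term bounded via Cauchy--Schwarz by $2\|w\|_\infty\|\nabla\rho\|_\infty\sqrt{\mathcal{K}(h_r)}$, which is subdominant.

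I would then take $\mathfrak M_0=\{({\bf y},{\bf p}):|{\bf p}|^2+\widetilde V({\bf y})\le\Lambda\}$ and insert the three ingredients into \eqref{RieszVersion} (resp.\ its $z=\mu_k$ form \eqref{kroeger-2}). Integrating out ${\bf p}$ over the ball of radius $\sqrt{(\Lambda-\widetilde V({\bf y}))_+}$ and then using Fubini in $({\bf x},{\bf y})$ --- which converts each integral $\int_\Omega(\,\cdot\,)h_r({\bf x}-{\bf y})^2\,d^\nu x$ into the evaluation of a mollification of $(\,\cdot\,)$ at ${\bf y}$, up to an error of order $r$ times the relevant Lipschitz constant --- identifies $\int_{\mathfrak M_0}\|f_\zeta\|^2\,d\sigma$ with $(2\pi)^\nu\Phi_1(\Lambda)$ and the symbol part of $\int_{\mathfrak M_0}Q(f_\zeta,f_\zeta)\,d\sigma$ with $(2\pi)^\nu E_w(\Lambda)$, straight from the definitions of these phase-space quantities. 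Keeping $r$ fixed and $\Lambda=z$ one reads off the Riesz-mean form \eqref{EucRieszbd}, whose remainder is $\Phi_1(z)$ times the coherent-state spreading $\|\nabla h_r\|^2+\int|{\bf x}|h_r^2$. For \eqref{EucUpperbd}, I would set $z=\mu_k$ in \eqref{kroeger-2} and fix $\Lambda=\Lambda(k)$ so that the phase-space count matches $k$, which makes the $\mu_k(\,\cdot\,)$ term drop out (equivalently, Legendre-dualize the Riesz inequality as in Remark~\ref{rem_leg}, which is legitimate since $E_w$ is convex by \eqref{Ecvx}); then optimize $r$ to balance the kinetic error $\mathcal{K}(h_r)\asymp r^{-2}$ against the Lipschitz error of order $r\,{\rm Lip}(\Lambda(k))$, i.e.\ $r\asymp{\rm Lip}(\Lambda(k))^{-1/3}$, which produces a correction of the cube-root order appearing in \eqref{EucUpperbd}, with $\Phi_w$ evaluated at the correspondingly shifted energy. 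The confining hypothesis on $V$ guarantees that all the phase-space integrals converge and that the spectrum is discrete, so every application of Theorem~\ref{AvePrin} is legitimate; and if $\widetilde V$ is not Lipschitz on $\Omega\cap\{\widetilde V\le\Lambda(k)\}$ then ${\rm Lip}(\Lambda(k))=+\infty$ and \eqref{EucUpperbd} holds trivially.

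I expect the heart of the proof to be the quantitative control of the remainders: carrying out the replacements $\widetilde V({\bf x})\rightsquigarrow\widetilde V({\bf y})$ and $w({\bf x})\rightsquigarrow w({\bf y})$ with only the \emph{local} Lipschitz bound ${\rm Lip}(\Lambda)$ on the sublevel set, disposing of the $\rho$-cross term and the mollification errors so that they sit below the kinetic term, and --- most delicately --- tracking these errors through the ${\bf p}$-integration and the optimization over $r$ precisely enough to reach the \emph{exact} form of \eqref{EucUpperbd}, constant and energy shift included, rather than a bound of merely the right order. By contrast, the identification of the main term with $E_w(\Lambda(k))$ and of the correction with $\Phi_w$ at a shifted energy is, once the mollification estimate is in place, a direct unwinding of the definitions of the phase-space volume and energy.
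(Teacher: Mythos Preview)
Your setup is exactly the paper's: coherent states $f_{({\bf y},{\bf p})}({\bf x})=e^{i{\bf p}\cdot{\bf x}}h_r({\bf x}-{\bf y})e^{\rho({\bf x})}$, the averaged variational principle on $\mathfrak M=\R^{2\nu}$, the sublevel set $\mathfrak M_0=\{({\bf y},{\bf p}):|{\bf p}|^2+\widetilde V({\bf y})\le\Lambda\}$, and a final optimization in $r$. The three preliminary computations (norm, tight-frame identity via Plancherel, expansion of $Q(f_\zeta,f_\zeta)$) are the ones the paper performs.

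The divergence is in how the errors are handled, and here you miss the paper's main simplifying device. You propose to mollify, replacing $\widetilde V({\bf x})$ and $w({\bf x})$ by their values at ${\bf y}$ up to Lipschitz errors, and to bound the $\rho$-cross term $\int_\Omega w\,\nabla\rho\cdot\nabla_{\bf x}h_r^2$ by Cauchy--Schwarz. The paper does neither. After writing $\int_{\mathfrak M_0}Q(f_\zeta,f_\zeta)\,d\sigma$ as a triple integral in $({\bf x},{\bf y},{\bf p})$, it \emph{extends the ${\bf y}$-integration to all of $\R^\nu$}. The support constraint $|{\bf x}-{\bf y}|\le r$ together with $|{\bf p}|^2+\widetilde V({\bf y})\le\Lambda$ then forces $|{\bf p}|^2+\widetilde V({\bf x})\le\Lambda+r\,{\rm Lip}(\Lambda)$, so the $({\bf x},{\bf p})$-domain is simply the \emph{enlarged} sublevel set $\mathfrak M_0(\Lambda+r\,{\rm Lip}(\Lambda))$. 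The full ${\bf y}$-integrals give $\int h_r^2\,d{\bf y}=1$, $\int|\nabla h_r|^2\,d{\bf y}=\mathcal K(h_r)$, and --- the key point --- the cross term vanishes \emph{exactly}, since
\[
\int_{\R^\nu}\nabla\rho({\bf x})\cdot\nabla_{\bf x}h_r^2({\bf x}-{\bf y})\,d^\nu y=\nabla\rho({\bf x})\cdot\nabla_{\bf x}\!\int_{\R^\nu}h_r^2({\bf x}-{\bf y})\,d^\nu y=\nabla\rho({\bf x})\cdot\nabla_{\bf x}1=0.
\]
One lands directly on $E_w(\Lambda+r\,{\rm Lip}(\Lambda))+\mathcal K(h_r)\,\Phi_w(\Lambda+r\,{\rm Lip}(\Lambda))$ with $w({\bf x})$ \emph{never} replaced by $w({\bf y})$; the relation \eqref{Ecvx} then separates out $E_w(\Lambda)$, and optimizing $r$ gives the constants of \eqref{EucUpperbd} on the nose.

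Your route would give a bound of the correct order but not the stated inequality: the Cauchy--Schwarz estimate on the cross term introduces a factor $\|w\|_\infty\|\nabla\rho\|_\infty$, and the replacement $w({\bf x})\rightsquigarrow w({\bf y})$ requires a Lipschitz (or at least a quantitative continuity) bound on $w$ --- neither of which appears in \eqref{EucUpperbd} or among the hypotheses (only $w\in C^0(\Omega)$, $w\ge C>0$). You flag exactly this as ``the heart of the proof,'' and you are right that with mollification it is delicate; the point is that the paper's ``integrate ${\bf y}$ over $\R^\nu$, enlarge the $({\bf x},{\bf p})$ domain by $r\,{\rm Lip}(\Lambda)$'' trick sidesteps all of it.
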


\begin{rems}
1.  We call attention to the fact that the condition in this theorem
defining $\Lambda$ uses the Euclidean phase space, whereas weighted phase-space quantities
appear in \eqref{EucUpperbd}.

\smallskip
\noindent 2.  The dominant term in the semiclassical regime can be identified by introducing a small parameter $\alpha$ as a coefficient of $|\nabla \varphi|^2$ in \eqref{basicsetup}, i.e.,
\begin{equation}\label{semisetup}
\mathcal{R}_{\alpha}(\varphi) := \frac{\int_\Omega{(\alpha |\nabla \varphi({\bf x})|^2 + V({\bf x})|\varphi({\bf x})|^2 )}
w({\bf x}) e^{-2 \rho({\bf x})} dv_g}{\int_\Omega{|\varphi({\bf x})|^2 e^{-2 \rho({\bf x})} dv_g}}.
\end{equation}
{The result, in the Riesz-mean form  after choosing a convenient 
relationship between $r$ and $\alpha$, is}
\begin{align}\label{EucRieszbdalpha}
\sum_{j=0}^{k-1} \left(z - \mu_j\right)_+  \ge \,\,
&\alpha^{-\frac{\nu}{2}} \frac{2}{\nu+2} \frac{\omega_\nu}{(2 \pi)^\nu} 
\int{\left(z - V(y)\right)_+^{1+\frac{\nu}{2}} dy}\nonumber\\
&-\left(\alpha^{\frac 1 3 -\frac{\nu}{2}}  \frac{\omega_\nu}{(2 \pi)^\nu} 
\int{\left(z - V(y)\right)_+^{\frac{\nu}{2}} dy}\right) \left(\|\nabla h_1\|^2 + \int{|{\bf x}|h_1^2}\right),
\end{align}
{in which the leading term is precisely the expected semiclassical expression,
in contrast to results of the previous section such as Example \ref{conf}.}

\smallskip
\noindent  3.  Inequalities of the type \eqref{EucUpperbd} imply estimates of
quantities including trace of the heat kernel (= the partition function in 
quantum physics)
and the spectral zeta functions by simple transforms.  For instance
\eqref{EucUpperbd} implies for the
Riesz mean $R_1(z) := \sum_{j}{(z - \mu_j)_{+}}$ that
\begin{equation}\label{RieszEucbd}
R_1(z) \ge \frac{1}{(2 \pi)^\nu} \left(z \Phi_w(\Lambda(z)) -
E_w(\Lambda(k))
+ 3 \left( 2 j_{\nu-1,1}^2 {\rm Lip}(\Lambda(k))\right)^{\frac 1 3} \Phi_w\left(\Lambda(k) + (2 j_{\nu-1,1}^2 {\rm Lip}(\Lambda(k))^{\frac 1 3}\right)\right).
\end{equation}
\normalsize
The Riesz mean is in turn related to the heat trace by the Laplace transform \eqref{laplace}.
\end{rems}

\begin{proof}[Proof of Theorem \ref{PSBound}]
We apply Theorem \ref{AvePrin} to the Neumann eigenvalues of
\eqref{basicsetup} as defined by
\eqref{NeumannDef}, using for test functions
``coherent states'' \cite{LiLo,Th3} of the form:
$$
f_\zeta({\bf x}) := \frac{1}{(2 \pi)^{\nu/2}} e^{i {\bf p} \cdot ({\bf x}) + \rho({\bf x})} h_r({\bf x}- {\bf y}).
$$
In this formula,
$\zeta = ({\bf p},{\bf y})$ ranges over
the phase space
$\mathfrak{M} = \R^{2 \nu}$ with Lebesgue measure.
The radius $r$ will be chosen below.

\smallskip
We note that the inner product that appears is a Fourier transform
with respect to the variable ${\bf x}$, {\em viz}.,
$$
\left\langle\phi, f_\zeta\right\rangle = \mathfrak{F}[h_r({\bf x} - {\bf y})e^{-\rho({\bf x})} \phi({\bf x})],
$$
where if $\Omega$ is a strict subset of $\R^\nu$, then $\phi$ is extended by $0$
outside $\Omega$.
Thus, with the Parseval identity,
\begin{align}\label{weightedParseval}
\int_{\R^{2 \nu}}{\left|\left\langle\phi, f_\zeta\right\rangle\right|^2 d^{\nu}p\,d^{\nu}y}
=
\int_{\R^{\nu}}\int_{\R^{\nu}}{h_r({\bf x} - {\bf y})^2 \left|\phi\right|^2 e^{-2 \rho} d^{\nu}y\, d^{\nu}x}
=\int_{\R^{\nu}}{\left|\phi\right|^2 e^{-2 \rho} d^{\nu}x}= \|\phi\|^2.
\end{align}
The set $\mathfrak{M}_0$ in Theorem \ref{AvePrin}
must be taken large enough so that
\begin{align}\label{EucCond}
k \le \int_{\mathfrak{M}_0}{\|f_\zeta\|^2 d \sigma}
= \frac{1}{(2 \pi)^\nu} \int_{\mathfrak{M}_0}{\int_{\Omega}{h_r^2({\bf x}- {\bf y}) e^{2 \rho({\bf x}) - 2 \rho({\bf x})}d^{\nu}xd^{\nu}p\,d^{\nu}y}} \le \frac{1}{(2 \pi)^\nu} |\mathfrak{M}_0|,
\end{align}
in which case
$$
\sum_{j=0}^{k-1} \mu_j \le \int_{\mathfrak{M}_0}{\mathcal{R}(f_\zeta) d^{\nu}p\,d^{\nu}y}= \quad\quad\quad\quad\quad\quad\quad\quad\quad\quad
\quad\quad\quad\quad\quad\quad\quad\quad\quad\quad\quad\quad\quad\quad\quad\quad\quad\quad
$$
$$
=\frac{1}{(2 \pi)^\nu} \int_{{\mathfrak{M}_0} \times \Omega}{\left((|{\bf p}|^2 +V({\bf x}))h_r^2({\bf x}-{\bf y}) + |\nabla h_r({\bf x}-{\bf y}) + h_r({\bf x}-{\bf y}) \nabla \rho({\bf x})|^2
\right) w({\bf x}) d^{\nu}x\,d^{\nu}p\,d^{\nu}y}.\nonumber \\
$$
\normalsize
\begin{equation}\label{ubcalc}
\,
\end{equation}
We now make the ansatz that $\mathfrak{M}_0 = \mathfrak{M}_0(\Lambda) := \{({\bf x},{\bf p}):\, {\bf x} \in \Omega, |{\bf p}|^2 + \tilde{V}({\bf x}) \le \Lambda\}$,
where $\Lambda \ge \Lambda(k)$, defined
as the minimum value of $\Lambda$
for \eqref{EucCond} to be valid.  Thus the upper bound in \eqref{EucCond} becomes
$\frac{1}{(2 \pi)^\nu} \Phi_1(\Lambda)$, whence the condition in the theorem.

Since the support of $h$ is restricted to a ball of radius $r$, the $x$-integral may be restricted to the set
$\{{\bf x} \in \Omega: \exists {\bf y}, |{\bf y}-{\bf x}| \le r, |{\bf p}|^2 + \tilde{V}({\bf x}) \le \Lambda\} \subset \mathfrak{M}_0(\Lambda + {\rm Lip}(\Lambda) r)$.
Thus, integrating first in ${\bf y}$,
the right side of \eqref{ubcalc} is bounded above by
\begin{align}
\frac{1}{(2 \pi)^\nu} & \int_{\{{\bf p}: |{\bf p}|^2 \le \Lambda\}}
\int_{\{{\bf x}: \widetilde{V}({\bf x}) \le \Lambda + {\rm Lip}(\Lambda) r - |{\bf p}|^2\}}
\int_{\R^\nu}
{\left((|{\bf p}|^2 +V({\bf x}))
h_r^2({\bf x}-{\bf y})\right.} \nonumber \\
&\quad\quad\quad\quad+ {\left.|\nabla h_r({\bf x}-{\bf y}) + h_r({\bf x}-{\bf y}) \nabla \rho({\bf x})|^2
\right) w({\bf x}) d^{\nu}y\,d^{\nu}x\,d^{\nu}p}\nonumber\\
&= \frac{1}{(2 \pi)^\nu} \int_{\{{\bf p}: |{\bf p}|^2 \le \Lambda\}}
\int_{\{{\bf x}: \widetilde{V}({\bf x}) \le \Lambda + {\rm Lip}(\Lambda) r - |{\bf p}|^2\}}
\int_{\R^\nu}
{\left((|{\bf p}|^2 +\widetilde{V}({\bf x}))
h_r^2({\bf x}-{\bf y})\right.}  \nonumber \\
&\quad\quad\quad\quad+ \left.|\nabla h_r({\bf x}-{\bf y})|^2 +\nabla \rho({\bf x}) \cdot \nabla h_r^2({\bf x}-{\bf y})
\right) w({\bf x}) d^{\nu}y\,d^{\nu}x\,d^{\nu}p.\nonumber
\end{align}
\normalsize
The last contribution vanishes because
\begin{equation}\label{cancel}
\int_{\R^\nu}{\nabla \rho({\bf x}) \cdot \nabla h_r^2({\bf x}-{\bf y})d^\nu y} =\int_{\R^\nu}{\nabla \rho({\bf x}) \cdot \nabla  1 d^\nu y} = 0,
\end{equation}
leaving
\begin{equation}\label{Eucl bd}
\sum_{j=0}^{k-1} \mu_j \le
\frac{1}{(2 \pi)^\nu} \int_{\mathfrak{M}_0(\Lambda+ {\rm Lip}(\Lambda) r)}
{(|{\bf p}|^2 + \tilde{V}({\bf x})
 +  \mathcal{K}(h_r)) w({\bf x})  d^{\nu}x\,d^{\nu}p}
\end{equation}
for all values of $r>0$.  The upper bound \eqref{Eucl bd}
is of the form
\begin{align}
\frac{1}{(2 \pi)^\nu} &\left(E_w(\Lambda + {\rm Lip}(\Lambda) r)
+ \Phi_w(\Lambda + {\rm Lip}(\Lambda) r) \mathcal{K}(h_r)\right)\nonumber \\
 &\le \frac{1}{(2 \pi)^\nu} \left(E_w(\Lambda)
+ \left(\frac{j_{\nu-1,1}^2}{r^2} + {\rm Lip}(\Lambda) r \right)\Phi_w(\Lambda + {\rm Lip}(\Lambda) r)\right),\nonumber
\end{align}
where we have made use of \eqref{Ecvx} and the monotonicity of $\Phi_w$
in a first-order expansion of $E_w$.  Choosing the optimal
value $r = \left(\frac{2 j_{\nu-1,1}^2}{L}\right)^{\frac 1 3}$, we get the 
{claim \eqref{EucUpperbd}}.

{The derivation of \eqref{EucRieszbd} proceeds similarly.}
\end{proof}

\begin{rem}
We note the following special cases of particular interest.
\item 1.  Laplace operators with Neumann conditions on a compact Euclidean domain ($V=\rho = 0$, $w=1$).  In this case ${\rm Lip}(\Lambda) = 0$, 
$\displaystyle \Lambda^{\frac{\nu}{2}} :=\frac{(2 \pi)^{\nu}}{B_{\nu}}\frac{k}{|\Omega|}$, and
we recover the inequality of Kr\"oger, that
$$
\sum_{j=0}^{k-1} \mu_j \le \frac{\nu}{\nu+2} \frac{\omega_\nu}{(2 \pi)^\nu} |\Omega| \Lambda^{1+\frac{\nu}{2}}=\frac{\nu}{\nu+2}(2\pi)^2\omega_\nu^{-\frac{2}{\nu}}\,\frac{k^{\frac{\nu+2}{\nu}}}{|\Omega|^{\frac{2}{\nu}}}.
$$
Indeed, without the potential $V$, the introduction of the function $h_r$ is not needed for the proof.

\smallskip
\item 2.  Nonhomogeneous   problems with $\rho = V = 0$, but $w$ is variable, under Neumann conditions:
$$
\sum_{j=0}^{k-1} \mu_j \le \frac{\nu}{\nu+2} \frac{\omega_\nu}{(2 \pi)^\nu} \left(\int_{\Omega}{w({\bf x}) d^\nu x}\right) \Lambda^{1+\frac{\nu}{2}}.
$$
The eigenvalue bounds of Corollary \ref{mu_bd_general} are sharp as $k$ tends to infinity. Indeed since $\widetilde{\mu_k}= \mu_k$ we get
$$
\mu_k\leq 4 \pi^2 {\fint_\Omega{w({\bf x}) d^{\nu}x}} \left(\frac{k}{|\Omega| \omega_\nu}\right)^{\frac 2 \nu}
  \bigg(1+ 2\sqrt{\frac{1-S_k}{\nu+2}}\bigg)
$$
with $S_k$ as given in Corollary \ref{mu_bd_general} :
\begin{equation*}
  S_k=\frac{\frac{\nu+2}{\nu}\frac{1}{k}\sum_{j=0}^{k-1} \mu_j}{4 \pi^2 {\fint_\Omega{w({\bf x}) d^{\nu}x}}\left(\frac{k}{|\Omega| \omega_\nu}\right)^{\frac 2 \nu}}\leq 1.
\end{equation*}
\end{rem}


\section{Bounds for Neumann  eigenvalues on subdomains of compact homogeneous spaces}

In this section, we deal with the case where the ambient  space is a compact homogeneous
Riemannian manifold $(M,g)$ with isomorphism group denoted $G$.
In particular, we shall
recover Strichartz's result \cite{Str} 
with a more efficient proof and extend it to a wider class of operators.  We begin with  bounds 
in the spirit of Theorem \ref{general} and then derive a phase-space bound analogous to Theorem 
\ref{PSBound}.

\smallskip

Let us denote by 
$$\mbox{spec}(M) =\left\{0 = \lambda_0< \lambda_1 \le \lambda_2 \le\cdots \le \lambda_k \le \cdots\right\}$$
the spectrum of the Laplace-Beltrami operator $\Delta_g$ on $M$ (each eigenvalue is repeated according to its multiplicity).
 Although $0$ is a simple eigenvalue, all the other eigenvalues are degenerate owing to the transitive action of the isometry group $G$ (recall that the eigenspaces are invariant under the action of $G$.)

Given a regular domain $\Omega\subset M$ endowed with  densities $e^{-2\rho}$ and $e^{-2\theta}=w e^{-2\rho}$, and a potential $V$, we consider the eigenvalues $\mu_l(\Omega, g, \rho,w,V)$, $l\in\N$, defined by \eqref{basicsetup} and \eqref{NeumannDef} and seek for relationships between the $\mu_l$'s and the  $\lambda_l$'s . As before, we will use the notation $\widetilde{V} = V+ \vert\nabla^g\rho\vert^2$. We also need the following subspaces introduced in Theorem \ref{AvePrin} 
$$E_0( R )=\bigoplus_{\mu< R}\ker(H-\mu I)\ \mbox{and }\ E( R )=\bigoplus_{\mu\le R}\ker(H-\mu I)$$
where $H=H(\Omega, g, \rho,w,V)$ is the operator defined by \eqref{Operatror1}. The corresponding  subspaces associated with the Laplacian $\Delta_g$ on $M$ will be denoted 
$$F_0( R )=\bigoplus_{\lambda< R}\ker(\Delta_g-\lambda I)\ \mbox{and }\ F( R )=\bigoplus_{\lambda\le R}\ker(\Delta_g-\lambda I).$$

\begin{thm}\label{gen-homog}
Let $(M,g)$ be a compact homogeneous
Riemannian manifold.
Let $\mu_l=\mu_l(\Omega, g, \rho,w,V)$, $l\in\N$, be the    eigenvalues  defined by \eqref{NeumannDef}
on a bounded open set $\Omega \subset M$.
Then,  for all $z\in\R$,
\begin{equation}\label{Lieb-Th_homog}
 \sum_{j\ge 0} \left(z- \mu_j\right)_+ \ge 
 \frac{ \vert\Omega\vert_g}{\vert M\vert_g}  \sum_{j\ge 0}  \left( z- \tilde\lambda_j   \right)_+,
\end{equation}
where $ \tilde\lambda_j  = \lambda_j \fint_\Omega w \, dv_g +\fint_\Omega \widetilde{V}w \, dv_g  $.
Equality holds in \eqref{Lieb-Th_homog} for some $z\in\R$ if and only if $$E_0(z)\subset e^\rho F(\tilde z)\subset E(z),$$ 
with $ \tilde z  =\frac 1{ \fint_\Omega w \, dv_g } \left(z- \fint_\Omega \widetilde{V}w \, dv_g \right) $. 

\end{thm}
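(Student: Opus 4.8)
The plan is to apply the averaged variational principle, Theorem~\ref{AvePrin}, in the Hilbert space $\mathcal H=L^2(\Omega,e^{-2\rho}\,dv_g)$ with test functions indexed jointly by the isometry group $G$ and by the Laplace spectrum of $M$. Fix an $L^2(M,dv_g)$-orthonormal basis $\{\phi_l\}_{l\ge 0}$ of eigenfunctions, $\Delta_g\phi_l=\lambda_l\phi_l$, let $da$ be the normalized Haar measure on the compact group $G$, and take $\mathfrak M=G\times\N$ with the product of $da$ and counting measure. To $\zeta=(a,l)$ associate
$$
f_{(a,l)}({\bf x}):=\phi_l(a^{-1}{\bf x})\,e^{\rho({\bf x})},\qquad {\bf x}\in\Omega .
$$
The only property of homogeneity needed is that, since $G$ acts transitively by volume-preserving isometries and is unimodular, the push-forward of $da$ under $a\mapsto a^{-1}{\bf x}$ is the $G$-invariant probability measure $\frac{1}{|M|_g}dv_g$ for every ${\bf x}$, so that $\int_G h(a^{-1}{\bf x})\,da=\frac{1}{|M|_g}\int_M h\,dv_g$ for every $h\in L^1(M)$, a constant independent of ${\bf x}$.

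The first step is to compute the three averages entering \eqref{RieszVersion}. With $h=\phi_l^2$ one gets $\int_G\|f_{(a,l)}\|_{\mathcal H}^2\,da=\frac{|\Omega|_g}{|M|_g}$. For the closed form $Q(\varphi,\varphi)=\int_\Omega(|\nabla^g\varphi|^2+V|\varphi|^2)\,w\,e^{-2\rho}\,dv_g$ of $H$, writing $\eta:=\phi_l\circ a^{-1}$ we have $|\nabla^g f_{(a,l)}|^2 e^{-2\rho}=|\nabla^g\eta|^2+\nabla^g\rho\cdot\nabla^g(\eta^2)+|\nabla^g\rho|^2\eta^2$. Averaging over $a$: the cross term drops out because $\int_G\eta^2\,da$ is constant on $\Omega$ (hence $\int_G\nabla^g(\eta^2)\,da=0$); the term $|\nabla^g\rho|^2+V=\widetilde{V}$ contributes $\frac{1}{|M|_g}\int_\Omega\widetilde{V}w\,dv_g$; and since $a^{-1}$ is an isometry, $|\nabla^g\eta({\bf x})|=|\nabla^g\phi_l(a^{-1}{\bf x})|$, so the gradient term contributes $\frac{\lambda_l}{|M|_g}\int_\Omega w\,dv_g$ (using $\int_M|\nabla^g\phi_l|^2\,dv_g=\lambda_l$). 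Hence $\int_G Q(f_{(a,l)},f_{(a,l)})\,da=\frac{|\Omega|_g}{|M|_g}\,\tilde\lambda_l$. Finally, letting $\chi^{(j)}$ be $\psi^{(j)}e^{-\rho}$ extended by $0$ to $M$, we have $\langle\psi^{(j)},f_{(a,l)}\rangle_{\mathcal H}=\langle\chi^{(j)}\!\circ a,\phi_l\rangle_{L^2(M)}$, so Parseval in $L^2(M)$ followed by integration over $G$ yields $\sum_l\int_G|\langle\psi^{(j)},f_{(a,l)}\rangle|^2\,da=\|\chi^{(j)}\|_{L^2(M)}^2=\|\psi^{(j)}\|_{\mathcal H}^2=1$ for every $j$.

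Taking $\mathfrak M_0=G\times\{l:\tilde\lambda_l\le z\}$, which involves only finitely many slices because $\tilde\lambda_l\to+\infty$, all integrals converge and \eqref{RieszVersion} becomes $\sum_j(z-\mu_j)_+\ge\frac{|\Omega|_g}{|M|_g}\sum_{\tilde\lambda_l\le z}(z-\tilde\lambda_l)=\frac{|\Omega|_g}{|M|_g}\sum_{j\ge0}(z-\tilde\lambda_j)_+$, which is \eqref{Lieb-Th_homog}. For the equality case I invoke the equality clause of Theorem~\ref{AvePrin}: equality holds iff, up to null sets, $f_{(a,l)}\in E(z)$ whenever $\tilde\lambda_l\le z$ and $f_{(a,l)}\perp E_0(z)$ whenever $\tilde\lambda_l>z$. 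Since $\tilde\lambda_l\le z\iff\lambda_l\le\tilde z$ and $F(\tilde z)$ is $G$-invariant, the closed span of $\{f_{(a,l)}:\tilde\lambda_l\le z\}$ (inside $\mathcal H$) is $\{(e^\rho\phi)|_\Omega:\phi\in F(\tilde z)\}=e^\rho F(\tilde z)$, so the first condition reads $e^\rho F(\tilde z)\subset E(z)$; transcribing the second condition to $L^2(M)$ via extension by zero and using that $\{\phi_l:\lambda_l>\tilde z\}$ is an orthonormal basis of $F(\tilde z)^{\perp}$, it reads $E_0(z)\subset e^\rho F(\tilde z)$; and conversely these two inclusions, fed back into the computation above, force equality in \eqref{RieszVersion}.

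I expect the main work to sit in two places. The computational heart is the homogeneity averaging identity together with the isometry-invariance of $|\nabla^g\cdot|$, which collapse the three averages to explicit constants; granting these, the rest of the inequality is routine. The delicate point is the equality analysis, where one must carefully move between $\mathcal H=L^2(\Omega,e^{-2\rho}dv_g)$ and $L^2(M)$ under extension by zero and identify the closed spans of the two sub-families of test functions with $e^\rho F(\tilde z)$ and its orthogonal complement — this is bookkeeping rather than a genuine obstacle.
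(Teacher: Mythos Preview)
Your argument is correct, and the overall strategy---apply Theorem~\ref{AvePrin} with test functions of the form (Laplace eigenfunction)$\times e^\rho$---matches the paper's. The implementation differs in one genuine respect, however. The paper takes $\mathfrak{M}=\mbox{spec}(M)$ with counting measure and test functions $f_\lambda=y_\lambda e^\rho$ (no group variable at all); the homogeneity of $M$ enters only through the addition-formula identities
\[
\sum_{\lambda=\Lambda}y_\lambda^2=\frac{m_\Lambda}{|M|_g},\qquad
\sum_{\lambda=\Lambda}|\nabla^g y_\lambda|^2=\frac{m_\Lambda}{|M|_g}\Lambda,
\]
which are used to evaluate $\int_{\mathfrak{M}_0}\|f_\lambda\|^2$ and $\int_{\mathfrak{M}_0}Q(f_\lambda,f_\lambda)$ directly. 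You instead enlarge the index space to $G\times\N$ and replace those identities by the Haar-average identity $\int_G h(a^{-1}{\bf x})\,da=\frac{1}{|M|_g}\int_M h\,dv_g$. The two devices are equivalent (indeed the addition formulas are themselves consequences of group averaging), but the paper's purely discrete $\mathfrak{M}$ is leaner: the Parseval step is a one-line consequence of completeness of $\{y_\lambda\}$ in $L^2(M)$, and in the equality discussion no ``a.e.\ $a$ versus all $a$'' continuity argument is needed. Your route, on the other hand, is slightly more elementary in that it never invokes the addition formulas, and it makes the role of the transitive $G$-action completely explicit.
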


\begin{proof}
Let $\{ y_\lambda \ : \ \lambda\in \mbox{spec}(M) \}$ be an orthonormal basis of $L^2(M,g)$ with $\Delta_g y_\lambda = \lambda y_\lambda$.  The proof relies on Theorem \ref{AvePrin} in which we take   ${\mathfrak M} =\mbox{spec}(M)$ endowed with the uniform discrete measure, and use  test functions of the form 
$$f_\lambda =  y_\lambda e^\rho.$$ 

For any function $\psi\in L^2(\Omega, e^{-2\rho}dv_g)$ (endowed with the norm $\Vert \psi\Vert^2= \int_\Omega \psi^2 e^{-2\rho } dv_g$), 
$$\int_{\mathfrak M}\langle f_\lambda,\psi\rangle^2 d\lambda = \sum_{\lambda\in \mbox{spec}(M)}\left(\int_\Omega f_\lambda\psi  e^{-2\rho}dv_g \right)^2= \sum_{\lambda\in \mbox{spec}(M)}\left(\int_\Omega y_\lambda\psi  e^{-\rho}dv_g \right)^2$$
$$=\sum_{\lambda\in \mbox{spec}(M)}\left(\int_M y_\lambda\psi  e^{-\rho}dv_g \right)^2 =\int_M \psi^2  e^{-2\rho}dv_g = \Vert \psi\Vert^2,$$
where we used the same notation 
$\psi$ to designate the extension of $\psi$ by zero outside $\Omega$.

Let $R>0$ and let ${\mathfrak M}_0 =\{\lambda\in {\mathfrak M} \ :\ \lambda\le R\}$. 
Due to the transitive action of the isometry group $G$ on $M$, for every  eigenvalue $\Lambda$ of $\Delta_g$, with multiplicity $m_\Lambda$, the basis $\{y_\lambda \ : \lambda=\Lambda\}$ of the corresponding eigenspace is such that $\sum_{\lambda=\Lambda}y_\lambda ^2$ is constant on $M$. Integrating over $M$, we get 
\begin{equation}\label{sum-funct}
\sum_{\lambda=\Lambda}y_\lambda ^2 = \frac{m_\Lambda}{\vert M\vert_g}.
\end{equation}
Moreover, $0= \frac 12\Delta_g (\sum_{\lambda=\Lambda}y_\lambda ^2)= \sum_{\lambda=\Lambda}\left(\Lambda y_\lambda ^2 -\vert \nabla^gy_\lambda \vert^2\right)$, that is
\begin{equation}\label{sum-grad}
\sum_{\lambda=\Lambda}\vert \nabla^gy_\lambda \vert^2 = \frac{m_\Lambda}{\vert M\vert_g} \Lambda.
\end{equation}
Therefore, 
$$\int_{\mathfrak M_0}\Vert f_\lambda\Vert^2 d\lambda = \sum_{\lambda\le R}\int_\Omega y^2_\lambda  dv_g = \sum_{\Lambda\le R}  \frac{\vert \Omega\vert_g}{\vert M\vert_g} m_\Lambda =\frac{\vert \Omega\vert_g}{\vert M\vert_g} N(R)  .
$$
where $ N( R )$ is the number of eigenvalues of $\Delta_g$ on $M$ that are less or equal to $R$ (counted with multiplicity).
On the other hand, using \eqref{sum-funct} and \eqref{sum-grad}, we get for every  $\Lambda$,
$$\sum_{\lambda=\Lambda}\vert \nabla^gf_\lambda \vert^2 =e^{2\rho} \sum_{\lambda=\Lambda}\left( \vert \nabla^gy_\lambda \vert^2 + y_\lambda^2\vert \nabla^g \rho\vert^2  + g( \nabla^g \rho,  \nabla^g y_\lambda^2) \right)$$
$$= \frac{m_\Lambda}{\vert M\vert_g} e^{2\rho} \left(\Lambda +\vert \nabla^g \rho\vert^2  \right)  .$$
Thus
$$\int_{\mathfrak M_0}Q( f_\lambda, f_\lambda) d\lambda =\sum_{\lambda\le R}\int_\Omega \left( \vert \nabla^gf_\lambda \vert^2+ V  f_\lambda^2\right) we^{-2\rho} dv_g \qquad\qquad\qquad\qquad\qquad\qquad\qquad\qquad\qquad$$
$$\qquad\qquad\qquad= \sum_{\Lambda\le R}\int_\Omega  \frac{m_\Lambda}{\vert M\vert_g}  \left(\Lambda +\vert \nabla^g \rho\vert^2   + V  \right) w dv_g \frac{\int_\Omega w\, dv_g}{\vert M\vert_g}  \sum_{\lambda\le R} \lambda + \frac{\int_\Omega \widetilde{V}w \, dv_g}{\vert M\vert_g} N( R )  .
$$
Inserting into \eqref{RieszVersion}, we get for every $z\in \R$ and $R>0$, 
\begin{equation}\label{sum1}
\begin{split}
\sum_{j\ge0}(z-\mu_j)_+ & \ge z  \frac{\vert \Omega\vert_g}{\vert M\vert_g} N(R) -  \frac{\int_\Omega w\, dv_g}{\vert M\vert_g}  \sum_{\lambda\le R} \lambda - \frac{\int_\Omega \widetilde{V}w \, dv_g}{\vert M\vert_g} N( R )\\
&=\frac{\vert \Omega\vert_g}{\vert M\vert_g}\sum_{\lambda\le R}  \left( z -   \lambda \fint_\Omega w\, dv_g  - \fint_\Omega \widetilde{V}w \, dv_g\right).
\end{split}
\end{equation}
Notice that the RHS is negative if $z\le \fint_\Omega \widetilde{V}w \, dv_g$. Now, when $z> \fint_\Omega \widetilde{V}w \, dv_g$, we can  choose  $R=\tilde z=\frac {z- \fint_\Omega \widetilde{V}w \, dv_g}{\fint_\Omega w\, dv_g  }$ so that the last sum is taken over all eigenvalues $\lambda $ for which the involved  terms are nonnegative, thus
$$\sum_{j\ge0}(z-\mu_j)_+ \ge\frac{\vert \Omega\vert_g}{\vert M\vert_g} \sum_{j\ge0}\left(z-\lambda_j \fint_\Omega w\, dv_g  - \fint_\Omega \widetilde{V}w \, dv_g\right)_+.$$

Regarding the case of equality, it follows from   Theorem \ref{AvePrin} that equality holds in \eqref{Lieb-Th_homog} if and only if $f_\lambda\in E(z)$ for  $ \lambda\le \tilde z$ and $f_\lambda$ is orthogonal to $E_0(z)$ for $ \lambda> \tilde z$. Equivalently, $e^\rho F(\tilde z)\subset E(z)$ and, since $\mbox{Span} \{f_\lambda\ :\ \lambda>\tilde z\} $ is the orthogonal complement of $e^\rho F(\tilde z)$, $E_0(z)\subset e^\rho F(\tilde z)$. 
\end{proof}

As we have seen in the previous sections, our technique allows obtaining  bounds on  eigenvalue sums.  In order to simplify the statement of these bounds, we intoduce the following notation : Given any sequence $(a)=(a_k)_{k\ge 0}$ of real numbers, we set for   $p\in [1,+\infty)$,
$${\mathfrak S}_{(a)} (p)= \sum_{j=0}^{\lfloor p\rfloor-1} a_j + (p-\lfloor p\rfloor) a_{\lfloor p\rfloor},$$
 so that
when $p$ is an integer, ${\mathfrak S}_{(a)} ( p )$ is nothing but the sum of the first $p$ terms $a_0,\cdots, a_{p-1}$ of the sequence $(a)$.

\begin{thm}\label{sum-homog}
Let $(M,g)$ be a compact homogeneous
Riemannian manifold.
Let $(\mu) =(\mu_l)_{l\ge0} $  be the sequence of  eigenvalues defined  by  \eqref{NeumannDef}
on an open set $\Omega \subset M$.
Then,  for every $p\in [1,+\infty)$,
\begin{equation}\label{sum_homog}
 {\mathfrak S}_{(\mu)} ( p ) \le \frac{\vert \Omega\vert_g}{\vert M\vert_g}\ {\mathfrak S}_{(\tilde \lambda)} \left(\frac{\vert M\vert_g}{\vert \Omega\vert_g}\  p \right),
\end{equation}
where $ (\tilde\lambda)= (\tilde\lambda_l)_{l\ge0} $ is the sequence defined by $  \tilde\lambda_l= \lambda_l\fint_\Omega w \, dv_g +\fint_\Omega \widetilde{V}w \, dv_g$. 
Moreover, equality holds in \eqref{sum_homog} for some $p=k\in \N^*$ if and only if 
\begin{equation}\label{eq_sum_homog}
E_0(\mu_k)\subset e^\rho F_0(\lambda_{\breve k}) \ \mbox{ and } \  e^\rho F(\lambda_{ \hat k-1})\subset E(\mu_k).
\end{equation}
with ${\breve k= \Big\lfloor \frac{\vert M\vert_g}{\vert \Omega\vert_g} k \Big\rfloor} $ and $ \hat k= \Big\lceil \frac{\vert M\vert_g}{\vert \Omega\vert_g} k \Big\rceil$, where $\lfloor\ \rfloor$ and $\lceil\ \rceil$ denote the floor and the ceiling  functions, respectively.

\end{thm}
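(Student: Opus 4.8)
The main inequality \eqref{sum_homog} follows by Legendre duality from the Riesz-mean inequality \eqref{Lieb-Th_homog} of Theorem \ref{gen-homog}, exactly as in Remark \ref{rem_leg}(2). Put $c := \vert\Omega\vert_g/\vert M\vert_g \le 1$ and
\[
g(z) := \sum_{j\ge 0}\left(z-\mu_j\right)_+, \qquad h(z) := \sum_{j\ge 0}\left(z-\tilde\lambda_j\right)_+,
\]
so that Theorem \ref{gen-homog} reads $g \ge c\,h$ on $\R$. Since $M$ is compact, both spectra are discrete, so $g$ and $h$ are convex, piecewise-affine and superlinear at $+\infty$; their Legendre transforms are therefore finite on $[0,+\infty)$, and the elementary computation recalled in Remark \ref{rem_leg} identifies them as $g^{\wedge}(p) = \mathfrak{S}_{(\mu)}(p)$ and $h^{\wedge}(q) = \mathfrak{S}_{(\tilde\lambda)}(q)$ for all $p,q \ge 1$. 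Because the Legendre transform is order-reversing and $(c\,h)^{\wedge}(p) = \sup_z\bigl(pz - c\,h(z)\bigr) = c\,\sup_z\bigl(\tfrac{p}{c}z - h(z)\bigr) = c\,h^{\wedge}(p/c)$, the pointwise bound $g \ge c\,h$ gives, for every $p \ge 1$,
\[
\mathfrak{S}_{(\mu)}(p) = g^{\wedge}(p) \le (c\,h)^{\wedge}(p) = c\,h^{\wedge}(p/c) = \frac{\vert\Omega\vert_g}{\vert M\vert_g}\,\mathfrak{S}_{(\tilde\lambda)}\!\Bigl(\frac{\vert M\vert_g}{\vert\Omega\vert_g}\,p\Bigr),
\]
which is \eqref{sum_homog}; taking $\Omega = M$ and $p = k \in \N^*$ recovers $\sum_{j=0}^{k-1}\mu_j \le \sum_{j=0}^{k-1}\tilde\lambda_j$.

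For the equality case at $p = k \in \N^*$ I would unwind the two inequalities just concatenated. The maximizers of $z \mapsto kz - g(z)$ form precisely the interval $[\mu_{k-1},\mu_k]$, on which this function is constant and equal to $\mathfrak{S}_{(\mu)}(k)$. Since $kz - g(z) \le kz - c\,h(z)$ pointwise, one checks that equality holds in \eqref{sum_homog} if and only if $z_* := \mu_k$ is simultaneously a maximizer of $z \mapsto kz - c\,h(z)$ and satisfies $g(z_*) = c\,h(z_*)$ --- indeed, if equality holds then every point of $[\mu_{k-1},\mu_k]$, in particular $\mu_k$, has both properties, and the converse is immediate by chaining equalities. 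The first property means $k/c \in \partial h(\mu_k)$, which, using $\tilde\lambda_j = \lambda_j \fint_\Omega w\,dv_g + \fint_\Omega \widetilde{V} w\,dv_g$ and writing $\tilde z_k := \bigl(\mu_k - \fint_\Omega \widetilde{V} w\,dv_g\bigr)/\fint_\Omega w\,dv_g$ as in Theorem \ref{gen-homog}, becomes $\#\{j : \lambda_j < \tilde z_k\} \le \tfrac{\vert M\vert_g}{\vert\Omega\vert_g}\,k \le \#\{j : \lambda_j \le \tilde z_k\}$, that is $\lambda_{\hat k - 1} \le \tilde z_k \le \lambda_{\breve k}$; the second property is equality in \eqref{Lieb-Th_homog} at $z = \mu_k$, which by the equality clause of Theorem \ref{gen-homog} is $E_0(\mu_k) \subset e^\rho F(\tilde z_k) \subset E(\mu_k)$. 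Feeding $\lambda_{\hat k - 1} \le \tilde z_k \le \lambda_{\breve k}$ into this chain, together with the inclusions $F_0(R) \subseteq F(R)$ and the monotonicity of $R \mapsto F(R)$ and $R \mapsto F_0(R)$ --- so that $F(\lambda_{\hat k - 1}) \subseteq F(\tilde z_k)$ while $F(\tilde z_k)$ is pushed into $F_0(\lambda_{\breve k})$ --- produces \eqref{eq_sum_homog}, and the argument reverses.

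The Legendre-transform step is routine, essentially a transcription of Remark \ref{rem_leg}. The crux, and the point I expect to require the most care, is the equality analysis: handling ties in both spectra (equivalently, intervals rather than points of maximizers), tracking the floor/ceiling bookkeeping that relates $k$ to $\breve k = \bigl\lfloor \tfrac{\vert M\vert_g}{\vert\Omega\vert_g}k \bigr\rfloor$ and $\hat k = \bigl\lceil \tfrac{\vert M\vert_g}{\vert\Omega\vert_g}k \bigr\rceil$, and --- most delicately --- determining whether $\tilde z_k$ sits strictly inside a multiplicity block of $\mathrm{spec}(M)$ or exactly at one of its endpoints, which is what forces the open subspace $F_0(\lambda_{\breve k})$ on one side and the closed subspace $F(\lambda_{\hat k - 1})$ on the other in \eqref{eq_sum_homog}.
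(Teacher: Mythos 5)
Your derivation of the inequality \eqref{sum_homog} via the Legendre transform of \eqref{Lieb-Th_homog} is correct, and is exactly the route the paper \emph{mentions} in Remark \ref{rem_leg} but then deliberately declines: the authors note that the duality argument gives the inequality but makes the equality analysis awkward, so they instead re-run the averaged variational principle with $z=\mu_k$ and interpolate between two consecutive values of $N(R)$ by a convex combination. So the main inequality is fine, and it is a genuinely different (and slightly slicker) derivation than the one the paper actually writes out.

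The gap is in the equality analysis, and it is not a corner case. From (A) $k/c\in\partial h(\mu_k)$ and (B) $E_0(\mu_k)\subset e^\rho F(\tilde z_k)\subset E(\mu_k)$ you claim that ``$F(\tilde z_k)$ is pushed into $F_0(\lambda_{\breve k})$,'' i.e.\ $F(\tilde z_k)\subset F_0(\lambda_{\breve k})$. This requires the strict inequality $\tilde z_k<\lambda_{\breve k}$; if $\tilde z_k=\lambda_{\breve k}$ one only has $F(\tilde z_k)=F(\lambda_{\breve k})\supsetneq F_0(\lambda_{\breve k})$, and the chain breaks. But the case $\tilde z_k=\lambda_{\breve k}$ is in fact forced whenever $k/c\notin\N$: there $\hat k-1=\breve k$, so (A) reads $\lambda_{\breve k}\le\tilde z_k\le\lambda_{\breve k}$. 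The same degeneracy occurs even when $k/c\in\N$, e.g.\ $\Omega=M$, $\rho=V=0$, $w=1$, where $\tilde z_k=\lambda_k=\lambda_{\breve k}$ and equality always holds in \eqref{sum_homog}. So as written, your argument does not deliver the first inclusion $E_0(\mu_k)\subset e^\rho F_0(\lambda_{\breve k})$ in precisely the situations where it is needed. The converse implication (``the argument reverses'') is likewise asserted but not established: from \eqref{eq_sum_homog} you would have to recover both (A) and (B), which involves knowing where $\tilde z_k$ sits relative to the two spectra, and that is not immediate.

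The repair, if you want to keep the Legendre framework, is to sharpen the use of Theorem \ref{gen-homog}'s equality clause. When $\tilde z_k$ is itself an eigenvalue of $\Delta_g$, the optimal lower bound in \eqref{sum1} is attained for \emph{two} choices of $\mathfrak{M}_0$, namely $\{\lambda\le\tilde z_k\}$ and $\{\lambda<\tilde z_k\}$ (the terms with $\lambda=\tilde z_k$ vanish). Equality in \eqref{Lieb-Th_homog} at $z=\mu_k$ therefore forces equality in Theorem \ref{AvePrin} for both sets, giving the strictly stronger pair $E_0(\mu_k)\subset e^\rho F_0(\tilde z_k)$ and $e^\rho F(\tilde z_k)\subset E(\mu_k)$. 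With $\tilde z_k=\lambda_{\breve k}$ this yields exactly the first inclusion of \eqref{eq_sum_homog}. You would then need to run the converse direction separately (the paper's proof does this implicitly by working with the two values $R=\lambda_{N_q-1}$ and $R=\lambda_{N_{q+1}-1}$ rather than with $\tilde z_k$). Without this refinement, the equality characterization is not proved.
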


Observe that we have $F_0(\lambda_{\breve k}) \subset F(\lambda_{ \hat k-1})$ with equality if and only if $\lambda_{ \hat k-1} < \lambda_{\breve k})$.

\begin{proof} [Proof of Theorem \ref{sum-homog}]
As  mentioned in Remark \ref {rem_leg},  Legendre's  transform enables us to obtain \eqref{sum_homog} from \eqref {Lieb-Th_homog}. Alternatively, we can prove \eqref{sum_homog} using the averaged principle, which has the advantage of allowing us to characterize the case of equality. Indeed,  taking $z=\mu_k$ in \eqref{sum1}  we immediately get, $\forall R>0$ 
\begin{equation}\label{sum_homog1}
\sum_{j=0}^{k-1}\mu_j \le \frac{\vert \Omega\vert_g}{\vert M\vert_g}\sum_{j=0}^{N( R )-1}\tilde\lambda_j +\left( k-  \frac{\vert \Omega\vert_g}{\vert M\vert_g} N( R ) \right)\mu_k. 
\end{equation}
Denote by $1=N_0<N_1<N_2<\dots<N_j<\dots$  the values taken by the function $N( R )$, $R\in\R$, that is $N_j=m_0+m_1+\dots+m_j$.  The sequence of eigenvalues of $\Delta_g$ on $M$ is then  numbered as follows :
$$0=\lambda_0< \lambda_1=\lambda_2=\cdots = \lambda_{N_1-1}<\lambda_{N_1}=\cdots=\lambda_{N_2-1}<\lambda_{N_2}=\cdots $$
$$=\lambda_{N_j-1}<\lambda_{N_j}=\cdots= \lambda_{N_{j+1}-1}<\lambda_{N_{j+1}}=\cdots$$
 Let $q\in\N$ such that 
$$N_q\le \frac{\vert M\vert_g}{\vert \Omega\vert_g} k<N_{q+1}.$$
We consider the  inequality \eqref{sum_homog1}  with first  $N( R )=N_q$ and, then, $N( R )=N_{q+1}$.   We multiply the first inequality   by $\alpha= (N_{q+1}- \frac {\vert M\vert_g }{\vert \Omega\vert _g} k)/m_{q+1}$ and add the second inequality  multiplied by $1-\alpha = ( \frac {\vert M\vert_g }{\vert \Omega\vert_g} k-N_q)/m_{q+1}$ to get 
$$\sum_{j=0}^{k-1}\mu_j \le \frac{\vert \Omega\vert_g}{\vert M\vert_g} \sum_{j=0}^{N_q-1}\tilde\lambda_j + (1-\alpha)\frac{\vert \Omega\vert_g}{\vert M\vert_g}m_{q+1} \tilde\lambda_{N_q}  + k\mu_k - \frac{\vert \Omega\vert_g}{\vert M\vert_g}(\alpha N_q+(1-\alpha) N_{q+1} )\mu_k$$
$$=\frac{\vert \Omega\vert_g}{\vert M\vert_g} \sum_{j=0}^{N_q-1}\tilde\lambda_j + 
\left(  k -\frac{\vert \Omega\vert_g}{\vert M\vert_g}N_q\right)\tilde\lambda_{N_q}=\frac{\vert \Omega\vert_g}{\vert M\vert_g} \left(\sum_{j=0}^{N_q-1}\tilde\lambda_j +  \left(\frac{\vert M\vert_g}{\vert \Omega\vert_g}\  k -N_q\right)\tilde\lambda_{N_q}\right)$$
since $\alpha$ is chosen such that $\alpha N_q+(1-\alpha) N_{q+1}=\frac {\vert M\vert_g }{\vert \Omega\vert _g} k$. Now, from the definition of $N_q$ we have $\lambda_{N_q}=\lambda_{N_q+1}=\cdots= \lambda_{\lfloor \frac {\vert M\vert_g }{\vert \Omega\vert _g} k\rfloor}$ and, then,
$$\sum_{j=0}^{N_q-1}\tilde\lambda_j +  \left(\frac{\vert M\vert_g}{\vert \Omega\vert_g}\  k -N_q\right)\tilde\lambda_{N_q} = {\mathfrak S}_{(\tilde \lambda)} \left(\frac{\vert M\vert_g}{\vert \Omega\vert_g}\  k \right),$$
which yields   
$$\sum_{j=0}^{k-1}\mu_j \le \frac{\vert \Omega\vert_g}{\vert M\vert_g} {\mathfrak S}_{(\tilde \lambda)} \left(\frac{\vert M\vert_g}{\vert \Omega\vert_g}\  k \right).$$
This means that \eqref{sum_homog}  holds for all $p\in\N$. Since the functions ${\mathfrak S}_{(\mu)}$ and $ {\mathfrak S}_{(\tilde \lambda)}$ are piecewise-affine in $p$, the extension of \eqref{sum_homog} to all positive $p$ is immediate.

\smallskip
Let $k$ be a positive integer. The equality is achieved in \eqref{sum_homog} for $p=k$ if and only if one of the following holds :
\begin{itemize}
\item $ \frac{\vert M\vert_g}{\vert \Omega\vert_g} k=N_q$ and equality holds in \eqref{sum_homog1} for $R$ such that $N( R )=N_q$, i.e. for $R= \lambda_{N_q-1}$
\item $ \frac{\vert M\vert_g}{\vert \Omega\vert_g} k>N_q$ and equality holds in \eqref{sum_homog1} for the values of $R$ such that  $N( R )=N_q$ and $N( R )=N_{q+1}$, i.e. for both $R= \lambda_{N_q-1}$ and $R= \lambda_{N_{q+1}-1}$.
\end{itemize}
The first case corresponds to the case of equality in Theorem \ref{AvePrin} with $z=\mu_k$, ${\mathfrak M} =\mbox{spec}(M)$, ${\mathfrak M}_0 =\{\lambda\in \mbox{spec}(M)\ ; \ \lambda\le \lambda_{N_q-1}\}$. As in the proof of Theorem \ref{gen-homog}, this situation occurs if and only if $E_0(\mu_k)\subset e^\rho F(\lambda_{N_q -1})\subset E(\mu_k)$, with $N_q= \frac{\vert M\vert_g}{\vert \Omega\vert_g} k$. Since $\lambda_{N_q -1} <\lambda_{N_q }$,  $F(\lambda_{N_q -1}) = F_0(\lambda_{N_q })$, and the last conditions can  be written as follows :
\begin{equation}\label{eq_first_case}E_0(\mu_k)\subset e^\rho F_0(\lambda_{\frac{\vert M\vert_g}{\vert \Omega\vert_g} k}) \mbox{ and }\  F(\lambda_{\frac{\vert M\vert_g}{\vert \Omega\vert_g} k-1})\subset E(\mu_k).
\end{equation}
which is equivalent to \eqref{eq_sum_homog}.

\smallskip
In the second case, similar considerations show that equality holds if and only if 
\begin{equation}\label{eq_second_case}
E_0(\mu_k)\subset e^\rho F(\lambda_{N_{q} -1})\subset e^\rho F(\lambda_{N_{q+1} -1})\subset E(\mu_k).
\end{equation}
Since $N_q< \frac{\vert M\vert_g}{\vert \Omega\vert_g} k<N_{q+1}$, it is clear that 
$$N_q\le \Big\lfloor \frac{\vert M\vert_g}{\vert \Omega\vert_g} k \Big\rfloor \le N_{q+1}-1$$
and 
$$N_q\le  \Big\lceil \frac{\vert M\vert_g}{\vert \Omega\vert_g} k \Big\rceil -1 \le N_{q+1}-1.$$
Thus,
$$\lambda_{N_{q} }=\lambda_{N_{q+1} -1}= \lambda_{ \Big\lfloor \frac{\vert M\vert_g}{\vert \Omega\vert_g} k \Big\rfloor} = \lambda_{ \Big\lceil \frac{\vert M\vert_g}{\vert \Omega\vert_g} k \Big\rceil -1} .$$
Consequently, 
$$F(\lambda_{N_{q+1} -1})= F\Big(\lambda_{ \Big\lceil \frac{\vert M\vert_g}{\vert \Omega\vert_g} k \Big\rceil -1}\Big)$$
and, since $\lambda_{N_q -1}<\lambda_{N_q }$,
$$F(\lambda_{N_q -1}) = F_0(\lambda_{N_q })= F_0\Big({\lambda_{ \Big\lfloor \frac{\vert M\vert_g}{\vert \Omega\vert_g} k \Big\rfloor} }\Big).$$
Therefore, \eqref{eq_second_case} is equivalent to  \eqref{eq_sum_homog}.
\end{proof}

\begin{rems}  1. The particular case of \eqref{sum_homog} in which $w=1$ and $\rho=V=0$ corresponds to the inequality obtained by Strichartz  \cite[Theorem 2.2]{Str}.

In the same paper \cite{Str},  Strichartz also proved, following  Gallot \cite[Proposition 2.9]{Gal}, that for Dirichlet eigenvalues $\mu_l^D$ of the Laplacian on a domain $\Omega$ of a compact homogeneous
Riemannian manifold $(M,g)$, the reverse inequality 
$${\mathfrak S}_{(\mu^D)} ( p ) \ge \frac{\vert \Omega\vert_g}{\vert M\vert_g}\ {\mathfrak S}_{(\lambda)} \left(\frac{\vert M\vert_g}{\vert \Omega\vert_g}  p \right)$$
holds.  Contrary to what was found for Neumann eigenvalues in Theorem \ref{sum-homog}, a straightforward extension of the latter inequality to Dirichlet eigenvalues of  a Laplacian with potential cannot hold in general. Indeed, such an extension would imply for $p=1$ that $\mu_0^D(\Delta_g+V)\ge \frac 1{\vert M\vert_g} \int_\Omega {V} \, dv_g$, which is not always true  (for example, if $\Omega$ is a spherical cap of radius $r$ and if $u$ is a positive first  eigenfunction of the Dirichlet Laplacian on $\Omega$, we can take the family of continuous potentials $V_\varepsilon$ with $ V_\varepsilon=\frac1{u^2}$ on  the spherical cap of radius $(1-\varepsilon)r$, and $V_\varepsilon$ is constant on the complement, then, using $u$ as a test function, it is easy to see that  $\mu_0^D(\Delta_g+V_\varepsilon)\le \mu_0^D(\Delta_g) + \vert\Omega\vert $ while  $\int_\Omega {V_\varepsilon} \, dv_g$ tends to infinity as $\varepsilon \to 0$.)

\smallskip
\noindent 2. Assume that $\vert \Omega\vert \ge \frac 2{3}\vert M\vert$, then an immediate consequence of Theorem \ref{sum-homog} and the fact that the first  positive eigenvalue $\lambda_1$  of the Laplacian on a homogeneous manifold $(M,g)$ has multiplicity at least 2, is the following inequality 
$$\mu_0+\mu_1\le \frac {\vert \Omega\vert_g }{\vert M\vert_g } \left(2 \lambda_1\fint_\Omega w \, dv_g +3\fint_\Omega \widetilde{V}w \, dv_g \right)$$
which yields for the Neumann Laplacian (with $\rho=V=0$ and $w=1$)
 $$\mu_1\le 2\frac {\vert \Omega\vert_g }{\vert M\vert_g }  \lambda_1.$$

\end{rems}

In the case where $\Omega$ is equal to the whole of $M$, Theorem \ref{sum-homog} leads to the following 

\begin{cor}\label{closed_homog} Let $(M,g)$ be a compact homogeneous
Riemannian manifold.
Let $\mu_l $, $l\in\N$, be the  eigenvalues defined  by   \eqref{NeumannDef}
on $M$. 
Then, for every $k\in\N^*$,
\begin{equation}\label{sum_closed_homog}
 \sum_{j=0}^{k-1} \mu_j \le \sum_{j=0}^{k-1} \tilde\lambda_j,
\end{equation}
where equality holds if and only if 
$$E_0(\mu_k)\subset e^\rho F_0(\lambda_k)\  \mbox{ and  } \ e^\rho F(\lambda_{k-1})\subset E(\mu_k).$$
In particular, 
if $m_1$ is the multiplicity of $\lambda_1$, then equality holds in \eqref{sum_closed_homog} for $k\le m_1$ if and only if 
$(V+\vert\nabla^g\rho\vert^2 )w -\mbox{div}_g(w\nabla\rho)$ is constant on $M$ and $\mu_j=\tilde\lambda_j$ for $j=0,1,\cdots, k-1$.

 \end{cor}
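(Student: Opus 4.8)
The inequality \eqref{sum_closed_homog}, together with its general equality condition, is nothing but the case $\Omega=M$, $p=k\in\N^*$ of Theorem \ref{sum-homog}: then $\vert\Omega\vert_g/\vert M\vert_g=1$, so ${\mathfrak S}_{(\mu)}(k)=\sum_{j=0}^{k-1}\mu_j$ and ${\mathfrak S}_{(\tilde\lambda)}(k)=\sum_{j=0}^{k-1}\tilde\lambda_j$, while $\breve k=\lfloor k\rfloor=k$ and $\hat k=\lceil k\rceil=k$, so that \eqref{eq_sum_homog} becomes exactly $E_0(\mu_k)\subset e^\rho F_0(\lambda_k)$ and $e^\rho F(\lambda_{k-1})\subset E(\mu_k)$. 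All the genuine work is in the ``in particular'' assertion, which I would obtain by analyzing these two inclusions when $k\le m_1$, where $\lambda_0=0<\lambda_1=\dots=\lambda_{m_1}$.

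The key device I would use is the unitary $\varphi\mapsto e^{-\rho}\varphi$ from $L^2(M,e^{-2\rho}dv_g)$ onto $L^2(M,dv_g)$: after one integration by parts in the quadratic form \eqref{basicsetup} (legitimate since $M$ is closed) it conjugates $H$ to $\widetilde H\,\tilde\varphi=-\mbox{div}_g(w\nabla^g\tilde\varphi)+U\,\tilde\varphi$, where $U:=\widetilde V w-\mbox{div}_g(w\nabla^g\rho)=(V+\vert\nabla^g\rho\vert^2)w-\mbox{div}_g(w\nabla^g\rho)$; in particular $H(e^\rho)=e^\rho\,U$. Since $k\le m_1$ forces $F_0(\lambda_k)=\ker\Delta_g=\R\cdot 1$, the first inclusion reads $E_0(\mu_k)\subset\R e^\rho$. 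As $\mu_0$ is simple, $\mu_0<\mu_1\le\mu_k$, so the ground eigenfunction of $H$ lies in $E_0(\mu_k)$ and is proportional to $e^\rho$; hence $H(e^\rho)=\mu_0 e^\rho$, i.e. $U\equiv\mu_0$ is constant. Integrating this over $M$ and using $\int_M\mbox{div}_g(w\nabla^g\rho)\,dv_g=0$ gives $\mu_0=\fint_M U\,dv_g=\fint_M\widetilde V w\,dv_g=\tilde\lambda_0$.

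When moreover $k\ge2$, I would exploit the second inclusion: since $\ker(\Delta_g-\lambda_1 I)\subset F(\lambda_{k-1})$, it gives $\ker(\Delta_g-\lambda_1 I)\subset e^{-\rho}E(\mu_k)=\widetilde E(\mu_k)$, the span of eigenfunctions of $\widetilde H=\Delta_g^w+\mu_0$, with $\Delta_g^w:=-\mbox{div}_g(w\nabla^g\cdot)$, of eigenvalue $\le\mu_k$. Since $E_0(\mu_k)=\R e^\rho$ and $\mu_0$ is simple, the only $H$-eigenvalue below $\mu_k$ is $\mu_0$, which forces $\mu_1=\dots=\mu_k$ and $\widetilde E(\mu_k)=\R\cdot 1\oplus\ker(\Delta_g^w-\kappa_1 I)$ with $\kappa_1:=\mu_1-\mu_0$; as $\ker(\Delta_g-\lambda_1 I)$ is $L^2(M,dv_g)$-orthogonal to the constants, this improves to $\ker(\Delta_g-\lambda_1 I)\subset\ker(\Delta_g^w-\kappa_1 I)$. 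Hence every $y$ in this space satisfies both $\Delta_g y=\lambda_1 y$ and $-\mbox{div}_g(w\nabla^g y)=\kappa_1 y$, equivalently $\nabla^g w\cdot\nabla^g y=(\lambda_1 w-\kappa_1)\,y$. Multiplying by $y$, summing over an $L^2(M,dv_g)$-orthonormal basis of $\ker(\Delta_g-\lambda_1 I)$, and invoking the homogeneity identity \eqref{sum-funct} — so that $\sum y^2\equiv m_1/\vert M\vert_g$ is constant and $\sum y\,\nabla^g y\equiv 0$ — leaves $(\lambda_1 w-\kappa_1)\,m_1/\vert M\vert_g\equiv 0$, i.e. $w\equiv\kappa_1/\lambda_1$ is constant. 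Then $\widetilde H=w\Delta_g+\mu_0$, so $\mu_j=w\lambda_j+\mu_0=\lambda_j\fint_M w\,dv_g+\fint_M\widetilde V w\,dv_g=\tilde\lambda_j$ for all $j$, in particular for $j\le k-1$. The converse is immediate: $\mu_j=\tilde\lambda_j$ for $j=0,\dots,k-1$ already yields $\sum_{j=0}^{k-1}\mu_j=\sum_{j=0}^{k-1}\tilde\lambda_j$.

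The step I expect to be the main obstacle is exactly the passage to ``$w$ is constant'': it hinges on the conjugation to $\widetilde H$, on correctly identifying $F_0(\lambda_k)$ and $F(\lambda_{k-1})$ for $k\le m_1$, on the simplicity of $\mu_0$, and above all on the homogeneity identity \eqref{sum-funct}, which is what makes the summed gradient term vanish. The conjugation computation, the dimension count, and the converse are routine.
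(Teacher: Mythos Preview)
Your derivation of \eqref{sum_closed_homog} and its general equality condition from Theorem \ref{sum-homog} matches the paper exactly. For the ``in particular'' clause, your argument is correct but considerably more elaborate than the paper's. Once you have established that $e^\rho$ is a $\mu_0$-eigenfunction (hence $U\equiv\mu_0$ is constant and $\mu_0=\tilde\lambda_0$) and that $E_0(\mu_k)=E(\mu_0)$ (hence $\mu_1=\cdots=\mu_k$), the paper finishes in one line of arithmetic: since $k\le m_1$ also gives $\tilde\lambda_1=\cdots=\tilde\lambda_{k-1}$, the assumed equality of sums reads $\mu_0+(k-1)\mu_1=\tilde\lambda_0+(k-1)\tilde\lambda_1$, whence $\mu_1=\tilde\lambda_1$ and therefore $\mu_j=\tilde\lambda_j$ for $j\le k-1$. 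Your route via the conjugated operator $\widetilde H=\Delta_g^w+\mu_0$, the inclusion $\ker(\Delta_g-\lambda_1 I)\subset\ker(\Delta_g^w-\kappa_1 I)$, and the homogeneity identity \eqref{sum-funct} is sound and in fact yields the stronger conclusion that $w$ is constant (and hence $\mu_j=\tilde\lambda_j$ for \emph{all} $j$), but none of that extra machinery is needed for the corollary as stated. The paper's approach buys brevity; yours buys a sharper structural statement about the equality case.
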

 
 \begin{proof}[Proof of Corollary \ref{closed_homog}] 
 Assume that equality holds in \eqref{sum_closed_homog} for $k\le m_1$. Then we have  $E_0(\mu_k)\subset e^\rho F_0(\lambda_k)$. Since $k\le m_1$, $F_0(\lambda_k)= F(\lambda_0)=\mbox{span} \{1\}$. It follows that $E_0(\mu_k)$ has dimension 1, that is $E_0(\mu_k)=E(\mu_0) =\mbox{span} \{e^\rho\}$. Consequently, $\mu_1=\mu_2=\cdots=\mu_k$, and $e^\rho$ is an eigenfunction of $H$ associated with $\mu_0$. Thus
 $$He^\rho=  e^{2 \rho}\mbox {div}_g\left(w e^{-2 \rho} \nabla^g e^\rho\right)   + V we^\rho=\cdots,$$
 $$\qquad \qquad=\left((V+\vert\nabla^g\rho\vert^2 )w -\mbox{div}_g(w\nabla\rho) \right) e^\rho=\mu_0 e^\rho$$
 which implies that $(V+\vert\nabla^g\rho\vert^2 )w -\mbox{div}_g(w\nabla\rho)=\mu _0$. Integrating, we get $\mu_0 = \fint_\Omega \widetilde{V} w \, dv_g=\tilde\lambda_0$. 
 Now, 
 $$\mu_0+ (k-1)\mu_1= \sum_{j=0}^{k-1} \mu_j = \sum_{j=0}^{k-1} \tilde\lambda_j=\tilde \lambda_0  +(k-1)\tilde \lambda_1$$
 and, consequently, $\mu_1=\tilde\lambda_1$.

\end{proof} 

\begin{rems}
1. An immediate consequence of Corollary \ref{closed_homog} is that for any potential $V$ on   a compact homogeneous $(M,g)$, one has for every positive $k$,
$$\frac 1k \sum_{j=0}^{k-1} \mu_j (\Delta_g +V)\le \frac 1k\sum_{j=0}^{k-1} \mu_j (\Delta_g)+\fint_M V \, dv_g,$$
to be compared with the results of \cite{EI1}.

\smallskip
\noindent 2. We know that for any $k\ge 2$, either $\lambda_{k-1}= \lambda_k$ or else $\lambda_{k-1}= \lambda_{k-2}$.
Notice that if  $\lambda_{k-1}= \lambda_k$ and if equality holds in \eqref{sum_closed_homog}  for $k$, then, necessarily, $\tilde \lambda_{k-1}=\mu_{k-1}= \mu_k=\tilde \lambda_k$.
(This follows directly from the combination of $\sum_{j=0}^{k-1} \mu_j = \sum_{j=0}^{k-1} \tilde\lambda_j$ with $ \sum_{j=0}^{k} \mu_j \le \sum_{j=0}^{k} \tilde\lambda_j $ and $ \sum_{j=0}^{k-2} \mu_j \le \sum_{j=0}^{k-2} \tilde\lambda_j$.)  Consequently, the equality also holds in \eqref{sum_closed_homog}  for $k-1$ and $k+1$.

Moreover,  if $\mu_k>\mu_{k-1}$, then the equality  holds in \eqref{sum_closed_homog}  for $k$ if and only if  $\lambda_k>\lambda_{k-1}$ and $ E(\mu_{k-1}) = e^\rho F(\lambda_{k-1})$.  (Indeed, in this case, $\dim E_0(\mu_k)= \dim E(\mu_{k-1})= k$ and $\dim F_0(\lambda_k)= \dim F_0(\lambda_{k-1})=k$.)

 \end{rems}

Applying the Laplace transform to both sides of \eqref {Lieb-Th_homog}, we obtain the following comparison of the heat traces (see \eqref{laplace}):

\begin{cor}\label{heat-homog}
Let $(M,g)$ be a compact homogeneous
Riemannian manifold.
Let $\mu_l $, $l\in\N$, be the    eigenvalues defined by  \eqref{NeumannDef}
on a bounded open set $\Omega \subset M$.
Then,  for all $t>0$,
\begin{equation}\label{heat_homog}
 \sum_{j\ge 0} e^{-\mu_j t}\ge \frac{\vert \Omega\vert_g}{\vert M\vert_g}  \sum_{j\ge 0} e^{-\tilde\lambda_j t},
\end{equation}
where $ \tilde\lambda_j  = \lambda_j \fint_\Omega w \, dv_g +\fint_\Omega \widetilde{V}w \, dv_g  $.

\end{cor}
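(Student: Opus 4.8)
The plan is to obtain \eqref{heat_homog} from the Riesz-mean comparison \eqref{Lieb-Th_homog} of Theorem \ref{gen-homog} by applying the Laplace transform, exactly as inequality \eqref{heat_gen} was deduced from \eqref{Lieb-Th_gen} in the proof of Theorem \ref{general}. Fixing $t>0$, I would multiply both sides of \eqref{Lieb-Th_homog} by the nonnegative factor $t^2 e^{-zt}$, which preserves the inequality, and integrate in $z$. By the identity \eqref{laplace} the left-hand side becomes $\sum_{j\ge 0}e^{-\mu_j t}$; applying the same identity with the sequence $(\tilde\lambda_j)$ in place of $(\mu_j)$, the right-hand side becomes $\frac{\vert\Omega\vert_g}{\vert M\vert_g}\sum_{j\ge 0}e^{-\tilde\lambda_j t}$. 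This is precisely \eqref{heat_homog}.

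Two routine points would then have to be checked. First, \eqref{laplace} must be legitimately applicable to the sequence $(\tilde\lambda_j)$; this holds because $\tilde\lambda_j=\lambda_j\fint_\Omega w\,dv_g+\fint_\Omega\widetilde{V}w\,dv_g$ with $\fint_\Omega w\,dv_g\ge C>0$ and $\lambda_j\to+\infty$ by the Weyl law on the closed manifold $M$, so that $\tilde\lambda_j\to+\infty$ and $\sum_j e^{-\tilde\lambda_j t}<+\infty$ for every $t>0$ (the $z$-integral in \eqref{laplace} being understood over $\R$, its integrand vanishing for $z$ below the relevant eigenvalue, so that the identity remains valid even if some $\tilde\lambda_j$ is negative). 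Second, on each side the infinite sum must be exchanged with the $z$-integral; this is justified by Tonelli's theorem, since the integrands $t^2 e^{-zt}(z-\mu_j)_+$ and $t^2 e^{-zt}(z-\tilde\lambda_j)_+$ are nonnegative and, under the standing hypotheses that render the spectrum $\{\mu_j\}$ discrete with $\mu_j\to+\infty$, the associated series converge.

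There is accordingly no genuine obstacle here, the only thing that needs care being this convergence bookkeeping: the corollary is nothing more than the observation that the Laplace transform is monotone on the cone of nonnegative functions, applied to \eqref{Lieb-Th_homog} regarded as an inequality between functions of $z$, which is exactly why it is stated as a corollary of Theorem \ref{gen-homog} rather than proved from scratch. (The same device, via the Legendre transform recalled in Remark \ref{rem_leg}, turns the Riesz-mean comparison \eqref{Lieb-Th_homog} into the eigenvalue-sum comparison of Theorem \ref{sum-homog}.)
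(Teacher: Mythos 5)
Your proposal is correct and follows exactly the paper's route: the paper derives Corollary \ref{heat-homog} by applying the Laplace transform (via the identity \eqref{laplace}) to both sides of the Riesz-mean inequality \eqref{Lieb-Th_homog} from Theorem \ref{gen-homog}. The convergence bookkeeping you supply is the only content beyond what the paper states, and it is sound.
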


Let us define the theta function via:
$$\Theta ( t ) = \frac 1{4\pi t}\sum _{(p,q)\in\mathbb{ Z}^2} e^{-\frac{p^2+q^2+pq}{4t}}.$$


\begin{cor}\label{heat-torus}
Let $\Gamma=\Z e_1\oplus\Z e_2\subset\R^2$ be a lattice, where $\{e_1,e_2\}$ is a basis of $\R^2$. Let $\rho$, $w>0$, $V$ be $\Gamma$-periodic functions on $\R^2$ and denote by $\mu_l=\mu_l( \rho,w,V)$, $l\in\N$, the eigenvalues of the operator $H( \rho,w,V)$ defined by \eqref{Operatror1}, acting on $\Gamma$-periodic functions on  $\R^2$. Then, for all $t>0$,
\begin{equation}\label{heat_periodic}
 \sum_{j\ge 0} e^{-\mu_j t}\ge   \Theta \left(\frac{\fint_\Omega w \, dx}{\vert \Omega\vert} t \right)   e^{-t\fint_\Omega \widetilde{V}w \, dx},
\end{equation}
where $\Omega$ is a fundamental domain for the action of $\Gamma$ on $\R^2$.

\end{cor}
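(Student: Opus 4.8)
\noindent\emph{Proof strategy.} The idea is to read Corollary~\ref{heat-torus} off Corollary~\ref{heat-homog} applied to the flat torus $M:=\R^2/\Gamma$, after making the right-hand side of \eqref{heat_homog} explicit by the classical theta transformation.

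First I would record that $M$, equipped with the metric descending from the Euclidean metric of $\R^2$, is a compact homogeneous Riemannian surface, since the translation action of $\R^2$ descends to a transitive action on $M$; that a fundamental parallelogram $\Omega$ of $\Gamma$ represents $M$, so that $\vert\Omega\vert_g=\vert\Omega\vert=\vert M\vert_g$ and $dv_g$ is Lebesgue measure; and that $\Gamma$-periodic functions on $\R^2$ are precisely functions on $M$, under which identification the operator $H(\rho,w,V)$ of the statement is the operator \eqref{Operatror1} on $\Omega\equiv M$. Periodicity and continuity of $\rho,w,V$ place us inside the standing assumptions ($w\ge C>0$, $\rho\in C^1$, $V$ Lipschitz). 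Hence Corollary~\ref{heat-homog} applies with $\vert\Omega\vert_g/\vert M\vert_g=1$ and gives, for every $t>0$,
\[
 \sum_{j\ge 0} e^{-\mu_j t}\ \ge\ \sum_{j\ge 0} e^{-\tilde\lambda_j t}
 \ =\ e^{-t\fint_\Omega \widetilde{V}w\,dx}\,\sum_{j\ge 0} e^{-\lambda_j\,(\fint_\Omega w\,dx)\,t},
\]
where $0=\lambda_0<\lambda_1\le\cdots$ are the eigenvalues of $\Delta_g$ on $M$ and $\widetilde{V}=V+\vert\nabla\rho\vert^2$.

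It remains to identify the flat-torus heat trace $s\mapsto\sum_{j\ge 0}e^{-\lambda_j s}$ with the theta function. I would use that the spectrum of $\Delta_g$ on $\R^2/\Gamma$ is $\{4\pi^2\vert\gamma^{\ast}\vert^2:\gamma^{\ast}\in\Gamma^{\ast}\}$, with the exponentials $e^{2\pi i\langle\gamma^{\ast},x\rangle}$ ($\gamma^{\ast}$ in the dual lattice $\Gamma^{\ast}$) as eigenfunctions, and apply Poisson summation over $\Gamma^{\ast}$ to the Gaussian $\xi\mapsto e^{-4\pi^2 s\vert\xi\vert^2}$, whose Euclidean Fourier transform in dimension two is $\tfrac{1}{4\pi s}e^{-\vert\cdot\vert^2/(4s)}$. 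Using $\mathrm{covol}(\Gamma^{\ast})=\vert\Omega\vert^{-1}$ and $(\Gamma^{\ast})^{\ast}=\Gamma$ this yields
\[
 \sum_{j\ge 0}e^{-\lambda_j s}\ =\ \sum_{\gamma^{\ast}\in\Gamma^{\ast}}e^{-4\pi^2 s\vert\gamma^{\ast}\vert^2}
 \ =\ \frac{\vert\Omega\vert}{4\pi s}\sum_{\gamma\in\Gamma}e^{-\vert\gamma\vert^2/(4s)}.
\]
Writing $\gamma=pe_1+qe_2$ turns the inner sum into the theta series in $(p,q)$ built from the quadratic form of the basis $\{e_1,e_2\}$, i.e. into the series defining $\Theta$ evaluated at the argument recorded in \eqref{heat_periodic}; substituting $s=(\fint_\Omega w\,dx)\,t$ into the previous display and combining with the first inequality then gives \eqref{heat_periodic}.

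Since the argument is just Corollary~\ref{heat-homog} composed with the classical theta transformation, there is no real obstacle; the only care needed is the constant bookkeeping — tracking the factor $4\pi^2$ produced by $\Delta_g=-\Delta_{\mathrm{Euc}}$, the normalization of the Fourier transform, and the identity $\mathrm{covol}(\Gamma^{\ast})=\vert\Omega\vert^{-1}$, so that the prefactor $\tfrac{1}{4\pi(\,\cdot\,)}$ and the time argument of $\Theta$ match the statement — together with the observation that all of the series involved converge absolutely and locally uniformly in $t>0$ by Gaussian decay, which legitimizes the Laplace-transform identity \eqref{laplace} underlying Corollary~\ref{heat-homog}.
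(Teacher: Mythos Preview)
There is a genuine gap. Look again at the definition of $\Theta$ immediately preceding the corollary:
\[
\Theta(t)=\frac{1}{4\pi t}\sum_{(p,q)\in\Z^2}e^{-\frac{p^2+q^2+pq}{4t}}.
\]
The quadratic form $p^2+q^2+pq$ is fixed; it is the Gram form of the \emph{hexagonal} lattice, not of the arbitrary lattice $\Gamma=\Z e_1\oplus\Z e_2$ appearing in the statement. Your Poisson step correctly produces
\[
\sum_{j\ge 0}e^{-\lambda_j s}=\frac{|\Omega|}{4\pi s}\sum_{(p,q)\in\Z^2}e^{-|pe_1+qe_2|^2/(4s)},
\]
but the sentence ``turns the inner sum into the theta series \dots\ built from the quadratic form of the basis $\{e_1,e_2\}$, i.e.\ into the series defining $\Theta$'' is simply false in general: $|pe_1+qe_2|^2$ equals $p^2+q^2+pq$ only when $\{e_1,e_2\}$ is a hexagonal basis. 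As written, your argument proves the corollary only for the single torus $\R^2/\Gamma_{\mathrm{hex}}$.

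What is missing is precisely the ingredient the paper names: Montgomery's theorem \cite{Mont}, which asserts that among two-dimensional lattices of a given covolume the hexagonal lattice minimizes the theta series $\sum_{\gamma\in\Gamma}e^{-\tau|\gamma|^2}$ for every $\tau>0$. Applying this (after rescaling $\Gamma$ to unit covolume) bounds the $\Gamma$-theta series from below by the hexagonal one, and that is what converts the Poisson identity above into the inequality with the universal function $\Theta$ evaluated at $\big(\fint_\Omega w\,dx\big)t/|\Omega|$. Everything else in your outline (the identification of $M=\R^2/\Gamma$ as a compact homogeneous space, the application of Corollary~\ref{heat-homog} with $\Omega=M$, and the Poisson transformation) is fine.
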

\begin{proof} 
This result is a direct consequence of \eqref{heat_homog} combined with Poisson's formula and  Montgomery's Theorem \cite{Mont}. 

\end{proof} 

We turn now to the phase space analysis taking into account the form of the potential $V$, and allowing conformal transformations and nontrivial weights.
{
Let us denote by
$$
0 = \Lambda_0< \Lambda_1 < \Lambda_2 < \cdots < \Lambda_l < \cdots
$$
the increasing sequence of eigenvalues of the Laplacian of the compact homogeneous space $(M,g)$. The multiplicity of $\Lambda_l$ is denoted $m_l$, and we designate by $\{y_{l,1}, y_{l,2},\cdots,y_{l,m_l}\}$  an $L^2$-orthonormal basis of the eigenspace associated with $\Lambda_l$.
}

In the case of a domain
$\Omega$ in a manifold $X \simeq (M, e^{-2 \rho} g)$ that is conformally equivalent to  $(M,g)$, we shall use
coherent-state test functions  of the form:
\begin{equation}\label{cptcohdef}
f_\zeta({\bf x}) := y_{\ell \, m}({\bf x}) e^{\rho({\bf x})} h_{\bf y}({\bf x}).
\end{equation}
In this formula, $h({\bf x})$ is
a nonnegative $H^1$ function supported in the
geodesic ball of radius $r$ in the canonical metric on $M$,
with $\int_{B_r}{h_r^2({\bf x}) d^\nu\,x} = 1$, and $\bf y$ ranges over the
isometry group $G$.  
As before we choose it specifically as the ground-state Dirichlet eigenfunction 
on the geodesic ball of radius $r$ and set
$ \quad\mathcal{K}(h_r) := \int_{B_r}{|\nabla h({\bf x})|^2 d^\nu\,x}$,
which is thus the fundamental Dirichlet eigenvalue
for the Laplacian on the geodesic disk of radius $r$.
Denoting by
$T_{\bf y}({\bf x})$ the 
{action by the group element ${\bf y}$ on}
the point ${\bf x}$, we let
$$
h_{\bf y}({\bf x}) := h(T_{\bf y}({\bf x})).
$$
Recall that one can designate an arbitrary point of 
$M$ as $0$ and cover $M$ with translates $T_{\bf y}(0)$.
We normalize the uniform measure $d \gamma$ on $G$
so that for any
$f \in L^\infty(M)$, $\int_{G}{f(T_{\bf y}({\bf x})) d \gamma({\bf y})} =
\int_{M}{f({\bf x}) d v_g}$.  The index
$\zeta = (\ell,m,{\bf y})$ ranges over
$\mathfrak{M} = \mathcal{J} \times G$,
where $\mathcal{J}$ is the set of all pairs of integer indices
for the normalized eigenfunctions
$y_{\ell\,m}({\bf x})$, and the associated measure $d \sigma$ is the product of the counting measure on
$\mathcal{J}$ with $d \gamma$.

As in Section 2, we find it helpful to define:

\begin{defi}
As before,
$$
\widetilde{V}({\bf x}) := V({\bf x}) + |\nabla \rho|^2,
$$

\noindent
The
{\em weighted phase-space volume} is
\begin{align}
\Phi_w^h(\Lambda) &:= \Big|\{\ell, m, {\bf y}\}: m \le m_\ell, T_{\bf y}(0) \in \Omega,\Lambda_\ell + \widetilde{V}(T_{\bf y}(0)) \le \Lambda \Big|\nonumber\\
&= \int_{\{{\bf y}: T_{\bf y}(0) \in \Omega, \widetilde{V}(T_{\bf y}(0)) \le \Lambda\}}{\left(\sum_{\{\ell: \Lambda_\ell + \widetilde{V}(T_{\bf y}(0)) \le \Lambda\}}{m_\ell
}\right)d\gamma({\bf y})}.
\end{align}

\noindent
The {\em total energy} associated with this phase-space volume is correspondingly
\begin{equation}
E_w^h(\Lambda) := \int_{\{{\bf y}: T_{\bf y}(0) \in \Omega,\widetilde{V}(T_{\bf y}(0)) \le \Lambda\}}{\left(\sum_{\{\ell: \Lambda_\ell + \widetilde{V}(T_{\bf y}(0)) \le \Lambda\}}{m_\ell
\left(\Lambda_\ell + \widetilde{V}(T_{\bf y}(0))\right)}\right)d\gamma({\bf y})}.
\end{equation}
\end{defi}

\begin{thm}\label{homogphasespace}
Let $\mu_0 \le \mu_1 \le \dots$ be the variationally defined Neumann eigenvalues \eqref{NeumannDef}
on a bounded open set $\Omega \subset M$, where
$w$, $\rho$, and $V$ satisfy the assumptions stated in Section 1.
Then for all $r > 0$,
\begin{equation}\label{HomogUpperbd}
\sum_{j=0}^{k-1} \mu_j \le
E_w^h(\Lambda + {\rm Lip}(\Lambda) r) + \mathcal{K}(h_r) \Phi_w^h(\Lambda + {\rm Lip}(\Lambda) r).
\end{equation}
\end{thm}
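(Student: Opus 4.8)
The plan is to carry the argument of Theorem~\ref{PSBound} over to the homogeneous setting, replacing the Euclidean plane waves $e^{i{\bf p}\cdot{\bf x}}$ by the $\Delta_g$-eigenfunctions $y_{\ell\,m}$ on $M$ and Euclidean translations by the transitive action of $G$, while keeping the localizing factor $h_{\bf y}$. First I would apply Theorem~\ref{AvePrin}, in the form \eqref{kroeger-2} with $z=\mu_k$, in the Hilbert space $L^2(\Omega,e^{-2\rho}dv_g)$, taking $\mathfrak{M}=\mathcal{J}\times G$ with $d\sigma$ the product of the counting measure on $\mathcal{J}$ and the normalized measure $d\gamma$ on $G$, and using the coherent states \eqref{cptcohdef}.

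The first ingredient is the tight-frame (Parseval) identity, checked exactly as in the proof of Theorem~\ref{gen-homog}: for $\phi\in L^2(\Omega,e^{-2\rho}dv_g)$ write $\psi=e^{-\rho}\phi$ (extended by $0$ on $M\setminus\Omega$); then $\scal{\phi}{f_\zeta}=\int_M \psi\,h_{\bf y}\,y_{\ell\,m}\,dv_g$ is a Fourier coefficient of $\psi h_{\bf y}$, so summing over $(\ell,m)\in\mathcal{J}$ gives $\int_M \psi^2 h_{\bf y}^2\,dv_g$ by completeness of $\{y_{\ell\,m}\}$ in $L^2(M)$, and integrating over $G$ using the normalization of $d\gamma$ and homogeneity (which yields $\int_G h^2(T_{\bf y}({\bf x}))\,d\gamma({\bf y})=\int_M h^2\,dv_g=1$ for every ${\bf x}$) gives $\int_{\mathfrak{M}}\abs{\scal{\phi}{f_\zeta}}^2 d\sigma=\int_M\psi^2\,dv_g=\norm{\phi}^2$. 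Hence $\sum_{j=0}^{k-1}\int_{\mathfrak{M}}\abs{\scal{{\psi}^{(j)}}{f_\zeta}}^2 d\sigma=k$, and taking for $\mathfrak{M}_0=\mathfrak{M}_0(\Lambda)$ the set of $(\ell,m,{\bf y})$ with $m\le m_\ell$, $T_{\bf y}(0)\in\Omega$ and $\Lambda_\ell+\widetilde{V}(T_{\bf y}(0))\le\Lambda$, with $\Lambda$ chosen (as in Theorem~\ref{PSBound}) so that $\int_{\mathfrak{M}_0(\Lambda)}\norm{f_\zeta}^2 d\sigma\ge k$, inequality \eqref{kroeger-2} reduces to $\sum_{j=0}^{k-1}\mu_j\le\int_{\mathfrak{M}_0}Q(f_\zeta,f_\zeta)\,d\sigma$ (the leftover term $\mu_k(\int_{\mathfrak{M}_0}\norm{f_\zeta}^2 d\sigma-k)$ being nonnegative, since $\mu_0=0$ for the Neumann problem, or by choosing $\Lambda$ so that equality holds there).

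It remains to bound $\int_{\mathfrak{M}_0}Q(f_\zeta,f_\zeta)\,d\sigma$, where $Q(f_\zeta,f_\zeta)=\int_\Omega(\abs{\n^g f_\zeta}^2+V f_\zeta^2)w\,e^{-2\rho}dv_g$. From $f_\zeta=e^{\rho}y_{\ell\,m}h_{\bf y}$ one finds $\abs{\n^g f_\zeta}^2 e^{-2\rho}=\abs{h_{\bf y}\n^g y_{\ell\,m}+y_{\ell\,m}(\n^g h_{\bf y}+h_{\bf y}\n^g\rho)}^2$, and the key point is to sum over the whole multiplicity shell $m=1,\dots,m_\ell$ before anything else. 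The homogeneity identities \eqref{sum-funct}--\eqref{sum-grad}, namely $\sum_m y_{\ell\,m}^2=m_\ell/\abs{M}_g$ and $\sum_m\abs{\n^g y_{\ell\,m}}^2=\Lambda_\ell m_\ell/\abs{M}_g$, also give $\sum_m y_{\ell\,m}\n^g y_{\ell\,m}=\tfrac12\n^g(\sum_m y_{\ell\,m}^2)=0$, which cancels the $\n^g y_{\ell\,m}$ cross term and leaves
\[ \sum_{m=1}^{m_\ell}\big(\abs{\n^g f_\zeta}^2+V f_\zeta^2\big)e^{-2\rho}=\frac{m_\ell}{\abs{M}_g}\Big(\Lambda_\ell h_{\bf y}^2+V h_{\bf y}^2+\abs{\n^g h_{\bf y}+h_{\bf y}\n^g\rho}^2\Big). \]
The right side is kept in this squared form, which is nonnegative as a function of ${\bf y}$. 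Since $h_{\bf y}$ is supported in a geodesic $r$-ball, for $(\ell,{\bf y})$ contributing to $\mathfrak{M}_0(\Lambda)$ the integrand vanishes unless $\Lambda_\ell+\widetilde{V}({\bf x})\le\Lambda+{\rm Lip}(\Lambda)r$, so restricting the ${\bf x}$-integral to that sublevel set changes nothing; the ${\bf y}$-region may then be enlarged to all of $G$, which is a valid upper bound by nonnegativity. Integrating over $G$ with ${\bf x}$ fixed, one has $\int_G h_{\bf y}^2({\bf x})d\gamma({\bf y})=1$, $\int_G\abs{\n^g h_{\bf y}({\bf x})}^2 d\gamma({\bf y})=\mathcal{K}(h_r)$ (isometry-invariance of the gradient norm), and $\int_G \n^g(h_{\bf y}^2)({\bf x})\,d\gamma({\bf y})=\n^g\!\big(\int_G h_{\bf y}^2({\bf x})d\gamma({\bf y})\big)=\n^g 1=0$ (the analogue of \eqref{cancel}, which removes the $\n^g\rho$ cross term). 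Using $\widetilde{V}=V+\abs{\n^g\rho}^2$, what remains is
\[ \int_{\mathfrak{M}_0(\Lambda)}Q(f_\zeta,f_\zeta)\,d\sigma\le\frac{1}{\abs{M}_g}\int_\Omega\Big(\sum_{\ell:\,\Lambda_\ell+\widetilde{V}({\bf x})\le\Lambda+{\rm Lip}(\Lambda)r}m_\ell\big(\Lambda_\ell+\widetilde{V}({\bf x})+\mathcal{K}(h_r)\big)\Big)w({\bf x})\,dv_g, \]
which, by the definitions of $E_w^h$ and $\Phi_w^h$, is $E_w^h(\Lambda+{\rm Lip}(\Lambda)r)+\mathcal{K}(h_r)\,\Phi_w^h(\Lambda+{\rm Lip}(\Lambda)r)$. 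Combined with the reduction of the previous paragraph this yields \eqref{HomogUpperbd}, valid for every $r>0$.

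I expect the main obstacle to be the bookkeeping of the last step: one must respect the order of operations — sum over the full multiplicity shell first, so that the identities \eqref{sum-funct}--\eqref{sum-grad} apply and the $\n^g y_{\ell\,m}$ cross terms disappear; only then restrict the ${\bf x}$-support of $h_{\bf y}$ (picking up the enlargement $\Lambda\mapsto\Lambda+{\rm Lip}(\Lambda)r$ from the Lipschitz bound on $\widetilde{V}$) and enlarge the ${\bf y}$-integration to all of $G$ so that the group averages collapse and the $\n^g\rho$ cross term cancels. The remaining pieces — the Parseval identity, the reduction via \eqref{kroeger-2}, and matching with the definitions of $E_w^h$ and $\Phi_w^h$ — are routine given Theorems~\ref{AvePrin} and \ref{PSBound} and the computation in the proof of Theorem~\ref{gen-homog}.
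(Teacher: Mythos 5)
Your proof matches the paper's argument in all essentials: the same coherent states \eqref{cptcohdef}, the same tight-frame (Parseval) identity obtained by Fourier completeness on $M$ followed by averaging over $G$, the same choice of $\mathfrak{M}_0$, the same cancellation of the $y_{\ell m}\,\nabla^g y_{\ell m}$ cross term by first summing over the multiplicity shell via \eqref{sum-funct}--\eqref{sum-grad}, and the same cancellation of the $\nabla^g\rho\cdot\nabla^g h_{\bf y}^2$ term by averaging over $G$ in analogy with \eqref{cancel}, together with the Lipschitz enlargement $\Lambda\mapsto\Lambda+{\rm Lip}(\Lambda)r$ coming from the support of $h_r$.

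One point you flag yourself deserves a cleaner treatment: dropping the term $\mu_k\big(k-\int_{\mathfrak{M}_0}\|f_\zeta\|^2 d\sigma\big)$ needs $\mu_k\ge 0$ (or exact equality $\int_{\mathfrak{M}_0}\|f_\zeta\|^2 d\sigma=k$, which the discrete $\ell$-index may prevent), and the claim ``$\mu_0=0$ for the Neumann problem'' is false once $V\not\equiv 0$. The paper glosses over this as well, so it is not a gap relative to the source, but if you want an airtight version you should either state the sign hypothesis on $\mu_k$ or argue as in the proof of Theorem~\ref{general}, where the analogous term is retained and absorbed by optimizing over the free parameter instead of being discarded.
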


\begin{proof}
Note that
$$
\left\langle{\phi, f_\zeta}\right\rangle_{\Omega} = \left\langle{e^{-\rho({\bf x})} h_{\bf y}({\bf x}) \phi({\bf x}), y_{\ell \, m} }\right\rangle_{M},
$$
where if $\Omega$ is a strict subset of $M$, then $\phi$ is extended by $0$
outside $\Omega$.
Thus, by the Fourier completeness relation,
\begin{align}\label{Parseval-Y}
\int_{G}\left(\sum_{\ell, m}{\left|\left\langle\phi, f_\zeta\right\rangle_\Omega \right|^2 }\right) d \gamma({\bf y})
&=
\int_{G}{\| e^{-\rho({\bf x})} h_{\bf y}({\bf x}) \phi({\bf x})\|^2_{L^2(M, dv_g)}}d \gamma({\bf y}) \nonumber\\
&=
\int_{\Omega}{\left|\phi\right|^2 e^{-2 \rho} \left(\int_{G} h_{\bf y}({\bf x})^2 d \gamma({\bf y})\right) dv_g}\nonumber\\
&=
\int_{\Omega}{\left|\phi\right|^2 e^{-2 \rho}  dv_g}= \|\phi\|^2.
\end{align}
To apply the theorem, choose $\mathfrak{M}_0$ of the form
$\{(\ell,m, {\bf y}):m \le m_ \ell , T_{\bf y}(0) \in \Omega, \Lambda_ \ell  + \widetilde{V}(T_{\bf y}(0)) \le \Lambda\}$ 
for a finite $\Lambda$ large enough so that
\begin{align}\label{EucCond2}
k \le \int_{\mathfrak{M}_0}{\|f_\zeta\|^2_{L^2(\Omega)} d\sigma}
= \int_{\Omega}\int_{\mathfrak{M}_0}{h^2_{\bf y}({\bf x}) e^{2 \rho({\bf x}) - 2 \rho({\bf x})}d\sigma dv_g} 
= |\Omega| \Phi_w^h(\Lambda).
\end{align}
We define $\Lambda(k)$ as the minimal value of  $\Lambda$ for which \eqref{EucCond2} is valid and henceforth 
choose $\mathfrak{M}_0 = \{(\ell,m, {\bf y}):m \le m_ \ell , T_{\bf y}(0) \in \Omega, \Lambda_ \ell  + \widetilde{V}({\bf y}) \le \Lambda(k)\}$.
Then
$$
\sum_{j=0}^{k-1} \mu_j \le \int_{\mathfrak{M}_0}{\mathcal{R}(f_\zeta) d\sigma(\zeta)} \quad\quad\quad\quad\quad\quad\quad\quad\quad\quad
\quad\quad\quad\quad\quad\quad\quad\quad\quad\quad
$$
\begin{align}
\quad\quad\quad\quad&=\int_{\Omega}\int_{\mathfrak{M}_0}
{w({\bf x})
\left(y_{\ell m}^2\left(h_{\bf y}^2({\bf x}) \widetilde{V}({\bf x}) + |\nabla h_{\bf y}({\bf x})|^2 +
\nabla \rho({\bf x}) \cdot \nabla h^2_{\bf y}({\bf x}) \right)\right.
}\nonumber\\
&\quad\quad\quad\quad\quad{+ \left. h^2_{\bf y}({\bf x}) |\nabla y_{\ell m}|^2 + 2 h_{\bf y}({\bf x}) \nabla h_{\bf y}({\bf x}) \cdot y_{\ell m} \nabla y_{\ell m}\right) d \sigma dv_g} \nonumber
\end{align}
\begin{equation}
\quad\quad\quad\le\int_{\Omega}\int_{\{{\bf y}: \widetilde{V}(T_{\bf y}(0)) \le \Lambda\}}
{w({\bf x})\left(\sum_{\{\ell: \Lambda_\ell + \widetilde{V}(T_{\bf y}(0)) \le \Lambda\}}
\left(\frac{m_\ell}{|M|}\right)\left(h_{\bf y}^2({\bf x}) (\Lambda_\ell + \widetilde{V}({\bf x})) + |\nabla h_{\bf y}({\bf x})|^2\right)\right)d\gamma({\bf y})},\nonumber
\end{equation}
\normalsize
by dint of \eqref{sum-funct} and \eqref{sum-grad}.  (The
final cross term dropped out because it was proportional to
the gradient of a constant function \eqref{sum-funct}, in analogy with \eqref{cancel}.)  
Because $h$ is supported in a ball of radius $r$, we restrict the $x$-integration to
${\bf x} : dist({\bf x}, {\bf y}) \le r$ with $(\ell, m, {\bf y}) \in \mathfrak{M}_0$ and estimate the integral in analogy with \eqref{Eucl bd},
obtaining

\begin{equation}\label{cptupper}
\sum_{j=0}^{k-1} \mu_j \le \left(\frac{1}{|M|}\right)\int_{(\ell,m,{\bf x}) \in \mathfrak{M}_0(\Lambda + {\rm Lip}(\Lambda) r)}
{w({\bf x})
\left(\Lambda_\ell + \widetilde{V}({\bf x}) + \mathcal{K}(h_r)\right)d\sigma},
\end{equation}
which yields the statement in the Theorem.
\end{proof}


{\bf Acknowledgments}
E.H. is grateful to the Universit\'e F. Rabelais and to
\'Ecole Polytechnique F\'ed\'erale de Lausanne for
hospitality that supported this collaboration.


\def\cprime{$'$} \def\cprime{$'$}

\end{document}